\documentclass[a4paper,10pt]{amsart}
\usepackage{amsmath,amsthm,amssymb,latexsym,enumerate,xcolor,hyperref}
\usepackage{graphicx}
\usepackage[margin=2.5cm]{geometry}
\usepackage{mathtools}

\usepackage[dvipsnames]{xcolor}

\numberwithin{equation}{section}

\let\div\undefined
\DeclareMathOperator{\div}{div}

\begin{document}

	\newtheorem{thm}{Theorem}[section]
	\newtheorem{prop}[thm]{Proposition}
	\newtheorem{lem}[thm]{Lemma}
	\newtheorem{cor}[thm]{Corollary}
	\newtheorem{rem}[thm]{Remark}
	\newtheorem*{defn}{Definition}

	\newtheorem{definit}[thm]{Definition}
	\newtheorem{setting}{Setting}
	\renewcommand{\thesetting}{\Alph{setting}}
	
	\newcommand{\DD}{\mathbb{D}}
	\newcommand{\NN}{\mathbb{N}}
	\newcommand{\ZZ}{\mathbb{Z}}
	\newcommand{\QQ}{\mathbb{Q}}
	\newcommand{\RR}{\mathbb{R}}
	\newcommand{\CC}{\mathbb{C}}
	\renewcommand{\SS}{\mathbb{S}}

	\renewcommand{\theequation}{\arabic{section}.\arabic{equation}}

	\newcommand{\supp}{\mathop{\mathrm{supp}}}    
	
	\newcommand{\re}{\mathop{\mathrm{Re}}}   
	\newcommand{\im}{\mathop{\mathrm{Im}}}   
	\newcommand{\dist}{\mathop{\mathrm{dist}}}  
	\newcommand{\link}{\mathop{\circ\kern-.35em -}}
	\newcommand{\spn}{\mathop{\mathrm{span}}}   
	\newcommand{\ind}{\mathop{\mathrm{ind}}}   
	\newcommand{\rank}{\mathop{\mathrm{rank}}}   
	\newcommand{\Fix}{\mathop{\mathrm{Fix}}}   
	\newcommand{\codim}{\mathop{\mathrm{codim}}}   
	\newcommand{\conv}{\mathop{\mathrm{conv}}}   
	\newcommand{\epsi}{\mbox{$\varepsilon$}}
	\newcommand{\eps}{\mathchoice{\epsi}{\epsi}
		{\mbox{\scriptsize\epsi}}{\mbox{\tiny\epsi}}}
	\newcommand{\cl}{\overline}
	\newcommand{\pa}{\partial}
	\newcommand{\ve}{\varepsilon}
	\newcommand{\zi}{\zeta}
	\newcommand{\Si}{\Sigma}
	\newcommand{\cA}{{\mathcal A}}
	\newcommand{\cG}{{\mathcal G}}
	\newcommand{\cH}{{\mathcal H}}
	\newcommand{\cI}{{\mathcal I}}
	\newcommand{\cJ}{{\mathcal J}}
	\newcommand{\cK}{{\mathcal K}}
	\newcommand{\cL}{{\mathcal L}}
	\newcommand{\cN}{{\mathcal N}}
	\newcommand{\cR}{{\mathcal R}}
	\newcommand{\cS}{{\mathcal S}}
	\newcommand{\cT}{{\mathcal T}}
	\newcommand{\cU}{{\mathcal U}}
	\newcommand{\OM}{\Omega}
	\newcommand{\B}{\bullet}
	\newcommand{\ol}{\overline}
	\newcommand{\ul}{\underline}
	\newcommand{\vp}{\varphi}
	\newcommand{\AC}{\mathop{\mathrm{AC}}}   
	\newcommand{\Lip}{\mathop{\mathrm{Lip}}}   
	\newcommand{\es}{\mathop{\mathrm{esssup}}}   
	\newcommand{\les}{\mathop{\mathrm{les}}}   
	\newcommand{\nid}{\noindent}
	\newcommand{\pzr}{\phi^0_R}
	\newcommand{\pir}{\phi^\infty_R}
	\newcommand{\psr}{\phi^*_R}
	\newcommand{\pow}{\frac{N}{N-1}}
	\newcommand{\ncl}{\mathop{\mathrm{nc-lim}}}   
	\newcommand{\nvl}{\mathop{\mathrm{nv-lim}}}  
	\newcommand{\la}{\lambda}
	\newcommand{\La}{\Lambda}    
	\newcommand{\de}{\delta}    
	\newcommand{\fhi}{\varphi} 
	\newcommand{\ga}{\gamma}    
	\newcommand{\ka}{\kappa}   
	
	\newcommand{\core}{\heartsuit}
	\newcommand{\diam}{\mathrm{diam}}

	\newcommand{\lan}{\langle}
	\newcommand{\ran}{\rangle}
	\newcommand{\tr}{\mathop{\mathrm{tr}}}
	\newcommand{\diag}{\mathop{\mathrm{diag}}}
	\newcommand{\dv}{\mathop{\mathrm{div}}}
	
	\newcommand{\al}{\alpha}
	\newcommand{\be}{\beta}
	\newcommand{\Om}{\Omega}
	\newcommand{\na}{\nabla}
	
	\newcommand{\cC}{\mathcal{C}}
	\newcommand{\cM}{\mathcal{M}}
	\newcommand{\nr}{\Vert}
	\newcommand{\De}{\Delta}
	\newcommand{\cX}{\mathcal{X}}
	\newcommand{\cP}{\mathcal{P}}
	\newcommand{\om}{\omega}
	\newcommand{\si}{\sigma}
	\newcommand{\te}{\theta}
	\newcommand{\Ga}{\Gamma}
	
	\newcommand{\vV}{\mathbf{v}}
	\newcommand{\lbunu}{\ul{m}}
	\newcommand{\ca}{\ul{a}}
	\newcommand{\Vve}{\ul{\varepsilon}}
	\newcommand{\ur}{\ul{r}}

	\title[]{Optimal quantitative stability for the anisotropic Alexandrov soap bubble theorem and Heintze-Karcher inequality}
	
	\author{João Gonçalves da Silva}
	\address{School of Mathematical Sciences, Adelaide University, Adelaide SA 5005, Australia.}
	\email{joao.goncalvesdasilva@adelaide.edu.au}

	\author{Giorgio Poggesi}
	\address{School of Mathematical Sciences, Adelaide University, Adelaide SA 5005, Australia.}
	\email{giorgio.poggesi@adelaide.edu.au}

	\date{\today}
	
	\begin{abstract}
The present paper provides linear quantitative stability estimates for the anisotropic Alexandrov soap bubble theorem. The estimates are sharp and, to the authors’ knowledge, provide the first sharp quantitative version of Alexandrov’s theorem in the anisotropic setting.

Our estimates establish quantitative closeness to the Wulff shape, also providing sharp control of the closeness of the Gauss maps. As shown by the authors for the classical case, such a control can play a crucial role in achieving optimal profiles for arbitrary deviations and stronger measures of closeness. We also obtain sharp control of the so-called (anisotropic) excess.

The paper also provides the first sharp quantitative version of the anisotropic Heintze-Karcher inequality. For the Heintze-Karcher inequality, the stability estimates providing closeness of the Gauss maps and control of the excess are new even in the classical (isotropic) setting.
	\end{abstract}
	\keywords{Anisotropic Alexandrov Soap Bubble Theorem,
		Anisotropic Heintze-Karcher's inequality, Quantitative stability, Anisotropic mean curvature, Wulff shape}
	\subjclass{Primary 35B35, 49Q10, 74E15; Secondary 35A23}
\maketitle
\section{Introduction}

The present paper provides linear quantitative stability estimates for the anisotropic Alexandrov soap bubble theorem. The estimates are sharp and, to the authors’ knowledge, provide the first sharp quantitative version of Alexandrov’s theorem in the anisotropic setting.

The paper also provides the first sharp quantitative version of the anisotropic Heintze-Karcher inequality. For the Heintze-Karcher inequality, the stability estimates providing control of the closeness of the Gauss maps or of the excess are new even in the classical (isotropic) setting.

\subsection{The classical case: motivation and survey on the state of the art} 
It is well known from the calculus of variations that critical points of the classical isoperimetric problem have constant mean curvature boundaries. The celebrated Alexandrov Soap Bubble theorem provides a remarkable characterisation of these critical points.
In fact, Alexandrov’s Soap Bubble Theorem \cite{Al1,Al2} states that if a smooth\footnote{See also \cite{DM} for a recent generalisation to sets of finite perimeter.} bounded domain\footnote{In the present paper, {\it domain} means connected open set.} $\Omega$ in $\mathbb{R}^N$ (with $N \ge 2$) has boundary $\Gamma$ with constant mean curvature, then $\Omega$ is a ball (and $\Gamma$ is a sphere).
A natural question concerns the stability of this celebrated statement, which has attracted contributions from experts from disparate areas and provided fertile ground for research. In fact, this is not only a natural and fundamental question from the point of view of pure mathematics but also has fundamental applications in various areas, including, for instance, capillarity theory and the study of geometric flows. Indeed, the necessity of quantitative stability results for Alexandrov’s theorem naturally arises in the study of capillarity-type energy functionals \cite{CM} and in the study of the asymptotic behaviour of geometric flows (e.g., volume-preserving mean curvature flows and Mullins-Sekerka flows) \cite{JN, JMOS}. 

Investigating the stability of Alexandrov’s soap bubble theorem, one is naturally led to think of the following question:
if the boundary $\Gamma$ of a bounded and connected open set in $\RR^N$ has nearly constant mean curvature, must $\Gamma$ be close to a sphere? 
However, the answer to this question is negative in general, as {\it bubbling phenomena} can occur. In fact, configurations of connected sets converging to disjoint balls (of equal radii) can display arbitrary small deviations of their mean curvature from being constant (see, e.g., \cite{Butscher, ButscherMazzeo, Kapouleas_Annals, Kapouleas_JDG}). Nevertheless, such phenomena can be characterised and quantitatively studied. In fact, quantitative estimates of the closeness of a bounded open set $\Omega$ to a set given by the disjoint union of balls of equal radii were recently established in terms of various deviations of the mean curvature from some reference constant in three papers, as outlined in what follows. The first fundamental quantitative result in this direction is \cite{CM}, which deals with mean convex boundaries and uniform deviations of the mean curvature from being constant. Weakening the measure of the deviation in such a result is a delicate issue that is crucial in view of various applications, including those related to geometric flows, for which the necessity of results for $L^2$-type deviations typically arises. Progress in this direction was made in \cite{JN}, where a result for $L^{N-1}$ deviations was established, allowing applications to geometric flows in low dimensions (namely, in $\RR^N$ with $N \le 3$). Eventually, a result with $L^1$-type deviations (and even weaker) in any dimension was established in \cite{Po_JMPA}. From the qualitative point of view, it should be mentioned that, if $\Omega$ is assumed to be connected, dealing with $L^{N-1}$-type deviations or stronger ones (as in \cite{JN, CM}) leads to a limit configurations made of tangent balls of equal radii, whereas working with weaker deviations (as in \cite{Po_JMPA}) leads to limit configurations of disjoint (not necessarily tangent) balls of equal radii; in fact, the results in \cite{CM, JN} show that, when $\Omega$ is connected, the lengths of the possible ``necks’’ arising from the bubbling phenomena must tend to zero as the deviation of the mean curvature from being constant tends to zero, whereas the result in \cite{Po_JMPA} allows for thin but long necks.
Besides these foundational results that provided a deep understanding of the quantitative stability related to these bubbling phenomena, several works have focused on understanding the stability of Alexandrov’s theorem in the absence of bubbling, that is, under suitable bubbling-preventing assumptions as outlined in what follows. 
It is clear that, combining the above-mentioned results from \cite{CM, JN, Po_JMPA} with any bubbling-preventing assumption (for instance an isoperimetric bound, a nearly sphericity assumption, or a uniform regularity assumption) immediately leads to quantitative estimates of closeness to a single ball. Nevertheless, under these kinds of assumptions one has more tools and can achieve stronger control and sharp rates of stability as outlined in what follows. (Sharp rates of stability are not available yet in the general setting where bubbling is allowed.)

A natural bubbling-preventing assumption is given by assuming that the set is nearly spherical. In this direction, sharp quantitative versions of Alexandrov’s theorem (for sets that are Lipschitz-close to a sphere) were established in \cite[Theorems 1.8 and 1.10]{KM}, and subsequently in \cite{MPS, FZ_2023}. Such a nearly spherical approach was inspired by the seminal work of Fuglede \cite{Fuglede_TAMS} on the stability of the isoperimetric problem. 

Another alternative natural bubbling-preventing condition is given by assuming some uniform regularity of the domain. A result in this direction was established in \cite{CV}, which deals with uniform deviations by using the method of the moving planes. The estimate is the following
\begin{equation}\label{eq:CV result}
\inf_{z \in \RR^N} \left( \rho_e (z) - \rho_i (z) \right) \le C \left( \max_{p \in \Gamma} \cM(p) -\min_{p \in \Gamma} \cM(p) \right) ,
\end{equation}
where  $\cM$ denotes the mean curvature of $\Gamma$,
\begin{equation}\label{eq:intro:def rhoei}
\rho_e (z) := \max_{x \in \Gamma} |x-z| , \quad \rho_i (z) := \min_{x \in \Gamma} |x-z|  ,
\end{equation}
and the constant $C$ depends on the $C^{1,1}$ regularity of $\Gamma$ and the $N-1$-dimensional measure of $\Gamma$. Actually, the result in \cite{CV} states dependency on the radius of the uniform interior and exterior touching ball condition, which can be shown to be equivalent to $C^{1,1}$ regularity (see, e.g., \cite{ABMMZ}).\\
The uniform-type deviation $\max_{p \in \Gamma} \cM(p) -\min_{p \in \Gamma} \cM(p) $ was successfully replaced with weaker deviations in \cite{MP_JAM, MP_IUMJ, MP_CVPDE_2020, MP_MinE, GdSPo_CVPDE} by using different approaches via integral identities and inequalities.
In fact, linear stability estimates were obtained in any dimension for $L^2$ deviations at the cost of replacing the uniform closeness on the left-hand side of \eqref{eq:CV result} with a weaker norm, namely, $L^1$ closeness of $\Omega$ to a ball \cite[Theorem 4.6]{MP_IUMJ}, which can be improved with an $L^2$ closeness of $\Gamma$ to a sphere \cite[(i) in Theorem 1.3]{Po_TAMS}, which can be further improved to $L^2$ closeness of the Gauss maps \cite[$(3.10)$ in Theorem 3.9]{MP_MinE}. The last two results (\cite[(i) in Theorem 1.3]{Po_TAMS} and \cite[$(3.10)$ in Theorem 3.9]{MP_MinE}) read as follows: there exists a point $z \in \Om$ such that
\begin{equation}\label{eq:L^2 Gauss map}
\left\nr |x-z| - \frac{1}{\cM_{0}} \right\nr_{L^2(\Gamma)} + \left\nr \frac{ \nu}{\cM_{0}} - (x-z) \right\nr_{L^2(\Gamma)} \le C \, \nr \cM - \cM_{0} \nr_{L^2(\Gamma)},
\end{equation}
where $\nu$ denotes the exterior unit normal to $\Gamma$, and we have set
\begin{equation}\label{eq:intro:def H_0}
\cM_{0} := \frac{ P(\Om)}{N |\Omega|},
\end{equation}
where $|\Omega|$ denotes the $N$-dimensional measure of $\Omega$ and $P(\Om)$ denotes the perimeter of $\Om$. The constant $C$ in \eqref{eq:L^2 Gauss map} only depends on $N$, the diameter of $\Omega$, and the $C^{1,1}$ regularity of $\Gamma$.
The optimality of those linear stability profiles can be directly checked using ellipsoids.

The results in \cite{MP_JAM, MP_IUMJ, MP_CVPDE_2020, MP_MinE} also included estimates of uniform closeness of $\Gamma$ to a sphere (i.e., $\rho_e (z) -\rho_i (z)$, where $\rho_e (z)$ and $\rho_i (z)$ are as defined in \eqref{eq:intro:def rhoei}) for $L^2$ (and $L^1$-type) deviations of the mean curvature from being constant. In particular, for $L^2$ deviations, the stability profiles concerning the uniform closeness in \cite{MP_JAM, MP_IUMJ, MP_CVPDE_2020, MP_MinE} were almost optimal; more precisely, they were optimal in low dimensions ($N \le 3$), but not optimal for $N \ge 4$.
Optimal stability estimates in any dimension were finally established by the authors in \cite{GdSPo_CVPDE}, by leveraging a dimension reduction argument and Gagliardo-Nirenberg-type and Sobolev-type inequalities. More generally, for any $r \ge 2$ (more generally, for any $r > 1$ satisfying $r \ge \frac{2N - 2}{N+1}$), $k\ge 1$ and $0< \al\le 1$, \cite{GdSPo_CVPDE} provides the following stability profiles for the class of uniform $C^{k,\al}$ domains, in any dimension and for any $L^r$-type deviation, namely,
\begin{equation}\label{eq:intro:CVPDE_GdSPo}
	\rho_e (z) - \rho_i (z) \le C
	\begin{cases}
		\nr \cM- \cM_0 \nr_{L^r(\Gamma)}, \quad & \text{ if } N< 2r + 1
		\\
		\nr \cM- \cM_0 \nr_{L^r(\Gamma)} \max \left\lbrace\frac{1}{k + \alpha}    \log\left( \frac{1}{ \nr \cM- \cM_0 \nr_{L^r(\Gamma)} } \right) , 1 \right\rbrace , \quad & \text{ if } N= 2r + 1
		\\
		\nr \cM- \cM_0 \nr_{L^r(\Gamma)}^{\tau_{k, \alpha , N, r}} , \quad & \text{ if } N> 2r + 1
	\end{cases}
\end{equation}
for some $z \in \Omega$. Here, $\rho_e (z)$ and $\rho_i (z)$ are as defined in \eqref{eq:intro:def rhoei}, $\cM_0$ is as defined in \eqref{eq:intro:def H_0}, and
\begin{equation*}
	\tau_{k, \alpha , N, r} := \frac{k +  \alpha}{k +  \alpha + \frac{N-1-2r}{r}}. 
\end{equation*}
The constant $C$ in \eqref{eq:intro:CVPDE_GdSPo} only depends on $N$, $r$, the diameter of $\Omega$, and the $C^{k,\al}$ regularity of $\Gamma$.
The optimality of the linear profiles for $N<2r+1$ can be easily verified by direct computations with ellipsoids.
Note that the profile in \eqref{eq:intro:CVPDE_GdSPo} is not linear in high dimensions, but the stability exponent $\tau_{k, \alpha, N, r}$ is shown to be optimal in \cite{GdSPo_CVPDE} with explicit examples. 

As mentioned, a crucial step that allowed to achieve the optimal profiles in \eqref{eq:intro:CVPDE_GdSPo} was to notice that if the deviation $\nr \cM- \cM_0 \nr_{L^r(\Gamma)}$ -- and we are working within a class of uniformly $C^{k,\al}$ ($k \ge 1$ and $0< \al \le 1$) domains, which prevents bubbling phenomena --, then $\Omega$ must be a nearly spherical set. This qualitative information was established in \cite{GdSPo_CVPDE} by crucially leveraging \eqref{eq:L^2 Gauss map}.

Other results were recently established under alternative bubbling-preventing conditions involving, e.g., isoperimetric bounds \cite{JMOS}, the total Willmore energy \cite{BiZhou_2022}, or the so-called excess \cite{FZ_2026}.

\subsection{The anisotropic case}
In the present paper we provide the first sharp stability result for the anisotropic extension of Alexandrov’s soap bubble theorem. The available literature on the topic in the anisotropic setting is outlined in what follows. A fundamental qualitative result on bubbling phenomena was established in \cite{DMMN} for $L^2$ deviations of the anisotropic mean curvature from being constant, and under a mean convexity assumption. A quantitative stability result of closeness to the single Wulff-ball was later established in \cite{ScheuerZhang_JFA}, for $L^1$ deviations. 
Here, we are providing the first optimal stability profile, by establishing the anisotropic extension of \eqref{eq:L^2 Gauss map}. Not only is this the first optimal stability estimate in the anisotropic setting, but, as shown in \cite{GdSPo_CVPDE} for the classical case, such a control is crucial for achieving optimal profiles for arbitrary deviations and stronger measures of closeness as in \eqref{eq:intro:CVPDE_GdSPo}.
In the present paper, not only do we provide the full extension of \eqref{eq:L^2 Gauss map} to the anisotropic setting, but we also establish sharp control of the so-called anisotropic excess (see (iii) of Remark \ref{rem: on the three summands} for the definition), which has potential for future applications. In fact, having control of the excess has significant consequences for the stability issue, as recently shown in the isotropic setting in \cite{FZ_2026}.

The present paper also provides the first sharp quantitative version of the anisotropic Heintze-Karcher inequality. For the Heintze-Karcher inequality, the stability estimates providing closeness of the Gauss maps and control of the excess are new even in the classical (isotropic) setting.

 \subsubsection{Main results}\label{subsec:Main results}

Throughout this paper, $H\in C^{2,\alpha}\left(\RR^N\setminus\{0\}\right)$ (for some $\alpha\in(0,1)$) will be a {\it Finsler norm}, i.e., $H$ satisfies the following properties: 
\begin{equation}\label{AddedAssumpFinslernorm:convex}
     H \text{ is convex;}
\end{equation}
\begin{equation}\label{Assump:positivity of Finsler norm}
    H(\xi)\geq 0,\,\forall\, \xi\in \mathbb{R}^N,\,\, \text{and}\,\, H(\xi) = 0 \text{  if and only if  } \xi =0;
\end{equation}
\begin{equation}\label{Assump:Homogeneity assumption}
    H(t\xi) = tH(\xi), \,\,\text{for all}\,\, \xi \in \mathbb{R}^N \,\, \text{and}\,\, t\geq 0.
\end{equation}
Property \eqref{Assump:Homogeneity assumption} says that $H$ is positively $1$-homogeneous.\\
Note that a Finsler norm does not need to be a norm in the classical sense, as it is not necessarily symmetric. Of course, any norm in the classical sense is a Finsler norm.

We also assume that
$H$ satisfies the following ellipticity condition:
\begin{equation}\label{Assump:ellipticity assumption}
 V\coloneqq \frac{1}{2}H^2 \, \text{ satisfies } \,
    \lambda |\eta|^2\leq \eta^{T}\nabla^{2}_{\xi}V(\xi)\eta\leq \Lambda |\eta|^2,\,\, \text{for every}\,\, \xi\in \mathbb{S}^{N-1}\,\,\text{and every}\,\, \eta\in\RR^N,
\end{equation}
for some $0<\lambda<\Lambda$.

The dual Finsler norm of $H$ in $\mathbb{R}^N$, $H_0: \mathbb{R}^N\rightarrow [0,\infty)$ is defined by
	\begin{equation}\label{eq:dual norm}
		H_0(x):= \max_{\xi\neq 0}\frac{\langle x\,,\,\xi\rangle}{H(\xi)},
	\end{equation}
	where $\langle \cdot \,,\,\cdot\rangle$ denotes the standard inner product in $\mathbb{R}^N$. 
    
    The Wulff ball with respect to $H_0$ centered at $x$ and radius $r$, denoted $B_{r}^{H_0}(x)$, is defined as:
	\begin{equation}\label{eq:norm in the dual norm}
		B_{r}^{H_0}(x) = \left\{y\in \mathbb{R}^N:\, H_0(y-x)<r\right\},
	\end{equation}
    the Wulff sphere centered at $x$ and radius $r$, $\mathcal{W}_{r}(x)$, is given by 
    $$
    \mathcal{W}_{r}(x) = \partial B^{H_0}_{r}(x) = \left\{y\in \mathbb{R}^N:\, H_0(y-x)=r\right\}.
    $$
    
For a sufficiently smooth domain $\Omega$ with boundary $\Ga$ and outward unit normal $\nu$, its anisotropic outward normal is defined as 
    \begin{equation}\label{eq: def_anisotropic normal}
    \nu_H := \na_{\xi} H \left( \nu \right).
    \end{equation}

Let $\Omega\subset \RR^N$ be a bounded domain with sufficiently regular boundary $\Gamma$.
The anisotropic perimeter of $\Omega$ is defined as 
    \begin{equation}\label{eq: def_anisotropic perimeter}
    P_H(\Omega) := \int_{\Gamma} H(\nu)\,d\mathcal{H}^{N-1}(x),
    \end{equation}
and the anisotropic mean curvature of $\Ga$, with respect to $\nu$, $\mathcal{M}^{H}$, is defined as
    \begin{equation}\label{eq:anisotropic mean curvature}
        \mathcal{M}^{H} = \frac{1}{N-1}\div_{\Ga}\left(\nu_{H}\right),
    \end{equation}
   where $\div_{\Ga}$ denotes the tangential divergence operator.

\bigskip

Our first main theorem is the following optimal stability result for the anisotropic soap bubble theorem.

\begin{thm}[Optimal stability for the anisotropic soap bubble theorem]\label{thm:W12 stability}
 Let $H\in C^{2,\alpha}\left(\RR^N\setminus\{0\}\right)$ be a Finsler norm satisfying \eqref{Assump:ellipticity assumption}. Let $\Omega\subset \RR^N$ be a bounded domain with boundary $\Gamma$ of class $C^{1,1}$. Set 
\begin{equation}\label{eq:reference curvature}
     \mathcal{M}_{0}^{H} \coloneqq \frac{P_{H}(\Om)}{N\left|\Om\right|}.
 \end{equation}
 Then there exists $z\in \RR^N$, such that 
 \begin{equation}\label{eq:optimal W12 stability}
     \left\|\frac{1}{\mathcal{M}_{0}^{H}}-H_{0}(x-z)\right\|_{L^{2}\left(\Gamma\right)} + \left\|\frac{1}{\mathcal{M}_{0}^{H}}\nu_{H}- (x-z) \right\|_{L^{2}\left(\Gamma\right)}+\left\|\nu- \frac{\nabla H_{0}(x-z)}{\left|\nabla H_{0} (x-z)\right|} \right\|_{L^{2}\left(\Gamma\right)}\leq C \left\|\mathcal{M}-\cM_{0}^{H}\right\|_{L^{2}(\Ga)},
 \end{equation}
 where $C$ is a positive constant dependent only on $N$, the parameters $\Lambda$, $\lambda$ appearing in \eqref{Assump:ellipticity assumption}, the diameter of $\Om$ and the $C^{1,1}$ regularity of $\Ga$.
\end{thm}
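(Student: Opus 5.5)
We follow the integral-identity approach of \cite{MP_IUMJ, Po_TAMS, MP_MinE}, adapted to the anisotropic setting. Let $u$ solve the torsion-type problem $\div(\nabla_\xi V(\nabla u)) = N$ in $\Om$, $u=0$ on $\Ga$. Here $\nabla_\xi V(\nabla u)=H(\nabla u)\nabla_\xi H(\nabla u)$ and $Q^2=\frac12 H_0^2$ is the dual of $V$. Ellipticity \eqref{Assump:ellipticity assumption} together with $H\in C^{2,\al}$ gives $u\in C^{1,\be}(\cl\Om)$ and $\nabla_\xi V(\nabla u)\in W^{1,2}$. On $\Ga$ we have $\nabla u = |\nabla u|\nu$, so $H(\nabla u)=|\nabla u|H(\nu)$ and $\nabla_\xi H(\nabla u)=\nu_H$. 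The model solution is $q(x)=\frac12\bigl(H_0(x-z)^2 - r^2\bigr)$; its anisotropic Hessian $W:=\nabla(\nabla_\xi V(\nabla u))$ equals the identity.

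\textbf{Step 1: the fundamental identity.} We derive the anisotropic analogue of the Magnanini--Poggesi identity:
\[
\int_\Om (-u)\Bigl(\tr(W^2)-\tfrac{(\tr W)^2}{N}\Bigr)dx+\frac{1}{\cM_0^H}\int_\Ga\bigl(H(\nabla u)-R\bigr)^2 H(\nu)^{-1}\ldots
=(N-1)\int_\Ga (\cM_0^H-\cM^H)\,\bigl(H(\nabla u)^2-R^2\bigr)\,\ldots,
\]
with $R=1/\cM_0^H$ and suitable positive weights. It follows from an anisotropic Reilly/Pohozaev formula obtained by integrating $\div\bigl(W\,\nabla_\xi V(\nabla u)-\tr(W)\,\nabla_\xi V(\nabla u)\bigr)$. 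On $\Ga$ one uses the decomposition $\div(\nabla_\xi V(\nabla u))=(N-1)\cM^H H(\nabla u)+\partial_\nu$-terms, together with $\int_\Ga H(\nabla u)H(\nu)^{-1}\ldots = N|\Om|$ and the definition \eqref{eq:reference curvature}. Note that $\tr(W^2)-(\tr W)^2/N\ge0$, with a quantitative version valid since $W$ is similar to a symmetric matrix via $\nabla^2 V$. This is the anisotropic version of the argument in \cite{ScheuerZhang_JFA}. Cauchy--Schwarz on the right-hand side then gives
\[
\|H(\nabla u)-R\|_{L^2(\Ga)}\le C\,\|\cM^H-\cM^H_0\|_{L^2(\Ga)},\qquad \int_\Om(-u)\,|W-I|^2\le C\,\|\cM^H-\cM^H_0\|^2_{L^2(\Ga)} .
\]

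\textbf{Step 2: choice of $z$ and Poincaré-type estimates.} Let $P := \nabla_\xi V(\nabla u)-(x-z)$, where $z$ is chosen so that $\int_\Om P=0$. Then $\nabla P=W-I$. The anisotropic Hessian term controls $\int_\Om (-u)|\nabla P|^2$. Using $-u\ge c\,\dist(x,\Ga)$, which holds by the uniform interior Wulff-ball condition from $C^{1,1}$ regularity and a comparison with $q$, we apply the weighted Poincaré and trace inequalities of \cite{Po_TAMS, MP_MinE}: a Hardy--Poincaré inequality $\|P\|_{L^2(\Om)}\le C\|\dist(\cdot,\Ga)^{1/2}\nabla P\|_{L^2(\Om)}$, followed by the trace estimate $\|P\|_{L^2(\Ga)}\le C(\|P\|_{L^2(\Om)}+\|\de^{1/2}\nabla P\|)$. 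The trace step must be done carefully, using a harmonic-extension argument or the $\ve$-splitting from \cite{MP_MinE}, since $\nabla P$ is only weighted. The conclusion is $\|P\|_{L^2(\Ga)}\le C\|\cM^H-\cM_0^H\|_{L^2}$. On $\Ga$ we have $\nabla_\xi V(\nabla u)=H(\nabla u)\nu_H$, so combining with Step 1 yields the second summand $\|R\nu_H-(x-z)\|_{L^2(\Ga)}$.

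\textbf{Step 3: the remaining summands.} Apply $H_0$, using $H_0(\nu_H)=1$ and the Lipschitz property of $H_0$:
\[
|R-H_0(x-z)|=|H_0(R\nu_H)-H_0(x-z)|\le C\,|R\nu_H-(x-z)|,
\]
which gives the first summand. For the Gauss maps, note that $\nabla H_0$ is the inverse of $\nabla_\xi H$ on the Wulff shape: $\nabla H_0(\nu_H)=\nu/H(\nu)$, and $\nabla H_0$ is $0$-homogeneous and Lipschitz away from the origin because $H\in C^{2}$ and \eqref{Assump:ellipticity assumption} holds. Since $x-z$ stays away from $0$ on most of $\Ga$, we obtain
\[
\left|\nu-\frac{\nabla H_0(x-z)}{|\nabla H_0(x-z)|}\right|\le C\,|R\nu_H-(x-z)| .
\]
The point requiring care is where $|x-z|$ is small. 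There the pointwise bound fails, but both sides are bounded, and the set is small in measure because $|R\nu_H-(x-z)|\ge R/C$ on it; hence that region is controlled by Chebyshev's inequality. Alternatively, one shows $z\in\Om$ with $\dist(z,\Ga)\ge c$ by the argument of \cite{MP_IUMJ}.

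\textbf{Main obstacle.} We expect the hardest part to be Step 1: writing the anisotropic Reilly identity with exact boundary terms, handling the low regularity of $u$ (working with $\nabla_\xi V(\nabla u)\in W^{1,2}$ and approximating, since $V$ is only $C^{1,1}$ at $0$), and proving the coercivity $\tr(W^2)-(\tr W)^2/N\ge c|W-\tfrac{\tr W}{N}I|^2$ for the nonsymmetric $W$. Closely tied to this is the need for a uniform lower bound on $H(\nabla u)$ near $\Ga$ so that Step 2 applies. For the coercivity we would try the following first: write $W=\nabla^2V(\nabla u)\,\nabla^2u$ and conjugate by $(\nabla^2V)^{1/2}$, which turns $W$ into a symmetric matrix with the same eigenvalues, so $\tr(W^2)-(\tr W)^2/N\ge0$ with equality only when $W$ is scalar. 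A quantitative bound then follows with constants depending on $\la,\La$.
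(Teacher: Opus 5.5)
There is a genuine gap in your Steps 1 and 2. The coercivity argument (conjugating by $(\nabla^2V)^{1/2}$), your choice of $z$, and Step 3 are all fine.

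\textbf{The identity in Step 1 is not available in the form you need.} The anisotropic Reilly formula for the torsion function (with $\tr W=N$) gives an \emph{unweighted} identity:
\[
\frac{1}{N-1}\int_\Om\Big[\tr(W^2)-\frac{\tr(W)^2}{N}\Big]dx+\cM_0^H\int_\Ga\Big|H(\na u)-\frac{1}{\cM_0^H}\Big|^2H(\nu)=\int_\Ga\big(\cM_0^H-\cM^H\big)H(\na u)^2H(\nu).
\]
The weight $(-u)$ belongs to the Serrin-type identity, not to this one.

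More importantly, you cannot replace $H(\na u)^2$ by $H(\na u)^2-R^2$ on the right. That would require $\int_\Ga(\cM_0^H-\cM^H)H(\nu)=0$, which is false in general. For instance, in the isotropic planar case, for any non-circular simply connected domain, $\int_\Ga\kappa=2\pi<|\Ga|^2/(2|\Om|)$.

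What does vanish is $\int_\Ga(\cM_0^H-\cM^H)\langle x-z,\nu\rangle$, by the anisotropic Minkowski identity and the divergence theorem. Subtracting it turns the right-hand side into
\[
\int_\Ga(\cM^H-\cM_0^H)H(\na u)\big\langle\na_Hu-(x-z),\nu\big\rangle+\int_\Ga(\cM_0^H-\cM^H)\Big(H(\na u)-\frac{1}{\cM_0^H}\Big)\langle x-z,\nu\rangle .
\]
So $\|H(\na u)-R\|_{L^2(\Ga)}$ cannot be obtained from Step 1 alone by Cauchy--Schwarz. You need, at the same time, $\|\phi\|^2_{L^2(\Ga)}\le C\,D$, where $\phi=\na_Hu-(x-z)$ and $D$ is the interior deficit. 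Then, with $B=\|H(\na u)-R\|^2_{L^2(\Ga)}$, you get $D+B\le C\|\cM^H-\cM_0^H\|_{L^2(\Ga)}(D^{1/2}+B^{1/2})$, and absorbing gives the linear bound.

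\textbf{Step 2 cannot supply that trace bound.} The inequality $\|P\|_{L^2(\Ga)}\le C(\|P\|_{L^2(\Om)}+\|\de^{1/2}\na P\|_{L^2(\Om)})$ is false for general functions. The $\de^{1/2}$-weighted Dirichlet energy is an $H^{1/2}$-level quantity with no $L^2$ trace: truncations of $(\log(1/\de))^{a}$ with $0<a<1/2$ keep the right-hand side bounded while the trace blows up.

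In the isotropic Serrin papers this is avoided because the components of $P=-\na(q-u)$ are harmonic, so $\De|P|^2=2|\na P|^2$. The harmonic-extension and $\ve$-splitting arguments you cite rely on exactly that. Here $\na P=W-I_N$ is not even symmetric, so that structure is unavailable.

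\textbf{How to close the gap.} With the correct unweighted identity, the difficulty disappears:
\begin{enumerate}
\item For your choice of $z$, the plain Poincar\'e inequality gives $\|\phi\|_{L^2(\Om)}\le C\|W-I_N\|_{L^2(\Om)}$.
\item Integrate $\dv(|\phi|^2\na_Hu)$ over $\Om$ and use $\De_Hu=N$, $\|\na_Hu\|_{L^\infty}\le C$, and the Hopf bound $\langle\na_Hu,\nu\rangle\ge\la r_{\mathcal W}/\sqrt{\La}$ on $\Ga$. This yields $\|\phi\|^2_{L^2(\Ga)}\le C\|W-I_N\|^2_{L^2(\Om)}\le C\,D$, using your coercivity estimate.
\end{enumerate}

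One small point on Step 3: since $|R\nu_H|\ge R\sqrt{\la}$, the Gauss-map bound actually holds pointwise for every $x\ne z$, so the Chebyshev argument is unnecessary.
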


\begin{rem}\label{rem: on the three summands}
(i) The first summand on the left-hand side of \eqref{eq:optimal W12 stability} provides sharp linear control of the $L^2$ distance of $\Gamma$ to the Wulff shape centred at $z$ with radius $\frac{1}{\mathcal{M}_{0}^{H}}$, that is,
\begin{equation*}
\left\|\frac{1}{\mathcal{M}_{0}^{H}}-H_{0}(x-z)\right\|_{L^{2}\left(\Gamma\right)}
\leq C \left\|\mathcal{M}-\cM_{0}^{H}\right\|_{L^{2}(\Ga)}.
\end{equation*}

(ii) The second summand on the left-hand side of \eqref{eq:optimal W12 stability}
provides sharp linear control of the $L^2$ closeness of the (anisotropic) Gauss maps. More precisely, it provides sharp linear control on the $L^2$ distance between the anisotropic exterior normal $\nu_H$ to $\Gamma$ and $(x-z) \mathcal{M}_{0}^{H} $, that is,
\begin{equation*}
\left\|\frac{1}{\mathcal{M}_{0}^{H}}\nu_{H}- (x-z) \right\|_{L^{2}\left(\Gamma\right)}
\leq C \left\|\mathcal{M}-\cM_{0}^{H}\right\|_{L^{2}(\Ga)}.
\end{equation*}

(iii) It is clear that the control of these first two summands already provides the full anisotropic generalisation of \eqref{eq:L^2 Gauss map}.

Here, we further achieve sharp linear control of the {\it anisotropic excess} -- which is the third summand on the left-hand side of \eqref{eq:optimal W12 stability} --, namely:
\begin{equation*}
\left\|\nu- \frac{\nabla H_{0}(x-z)}{\left|\nabla H_{0} (x-z)\right|} \right\|_{L^{2}\left(\Gamma\right)}
\leq C \left\|\mathcal{M}-\cM_{0}^{H}\right\|_{L^{2}(\Ga)}.
\end{equation*}
The excess measures the $L^2$ distance between the classical exterior unit normal at $x\in \Gamma$ and the classical exterior unit normal $\frac{\nabla H_{0}(x-z)}{\left|\nabla H_{0} (x-z)\right|}$ to the Wulff shape centred at $z$ and passing through $x$.\\
The optimality of the linear stability profile in the last three inequalities, and hence in \eqref{eq:optimal W12 stability} can be easily verified in the isotropic case by direct computation with ellipsoids.
\end{rem}

Note that our result is global in the sense that no smallness assumption on the deviation $\left\|\mathcal{M}-\cM_{0}^{H}\right\|_{L^{2}(\Ga)}$ is required. 
It is clear that, in the special case where the deviation is zero, Theorem \ref{thm:W12 stability} yields the rigidity statement $\Om = B_{r}^{H_0}(z)$ and $\Ga = \mathcal{W}_{r}(z)$. In particular, if the mean curvature $\mathcal{M}$ is assumed to be constant on $\Gamma$, then the Minkowski identity (Lemma~\ref{lemma:Anisotropic Minkowski}) implies that $\mathcal{M} = \cM_{0}^{H}$ on $\Gamma$, so the deviation is zero and Theorem \ref{thm:W12 stability} therefore recovers
the anisotropic extension of Alexandrov's Soap Bubble Theorem (originally obtained in \cite{HLMG}), as stated in the following corollary.

\begin{cor}\label{cor:Anisotropic Bubbles}
    Let $H\in C^{2,\alpha}\left(\RR^N\setminus\{0\}\right)$ be a Finsler norm satisfying \eqref{Assump:ellipticity assumption}. Let $\Omega\subset \RR^N$ be a bounded domain with boundary $\Gamma$ of class $C^{1,1}$. 
    If $\cM^{H}$ is constant, then $\cM^{H} = \cM_{0}^{H}$, where $\cM_{0}^{H}$ is as in \eqref{eq:reference curvature}, and for some $z\in\RR^N$
    $$
    \Om = B_{r}^{H_0}(z) \quad \text{and}\quad \Ga = \mathcal{W}_{r}(z),
    $$
    where $r= \frac{1}{\cM_{0}^{H}}$.
\end{cor}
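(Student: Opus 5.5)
The statement to prove is Corollary \ref{cor:Anisotropic Bubbles}, and the plan is to obtain it directly from Theorem \ref{thm:W12 stability} together with the anisotropic Minkowski identity (Lemma~\ref{lemma:Anisotropic Minkowski}).

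\emph{Step 1: identify the constant.} Suppose $\cM^H \equiv c$ on $\Ga$. The anisotropic Minkowski identity reads
\begin{equation*}
\int_\Ga H(\nu)\, d\mathcal{H}^{N-1} = \int_\Ga \cM^H \,\langle x, \nu\rangle\, d\mathcal{H}^{N-1}.
\end{equation*}
By the divergence theorem,
\begin{equation*}
\int_\Ga \langle x,\nu\rangle\, d\mathcal{H}^{N-1} = N|\Om| .
\end{equation*}
Hence $P_H(\Om) = c\,N|\Om|$, that is, $c = \cM_0^H$. In particular $c>0$, so $r := 1/\cM_0^H$ is a well-defined positive radius. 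It follows that $\nr \cM^H - \cM_0^H\nr_{L^2(\Ga)} = 0$.

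\emph{Step 2: apply the stability estimate.} Apply \eqref{eq:optimal W12 stability}. Its right-hand side vanishes, so for some $z\in\RR^N$ the first summand gives $H_0(x-z) = r$ for $\mathcal{H}^{N-1}$-a.e.\ $x\in\Ga$. Since $\Ga$ is $C^{1,1}$ and $H_0$ is continuous, this holds for every $x\in\Ga$. Therefore $\Ga\subseteq \mathcal{W}_r(z)$.

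\emph{Step 3: upgrade the inclusion to equality.} The remaining point is a topological one.
\begin{itemize}
\item $\Ga$ is a compact $C^{1,1}$ hypersurface without boundary.
\item $\mathcal{W}_r(z)$ is a connected $(N-1)$-dimensional topological manifold: it is homeomorphic to $\SS^{N-1}$ via radial projection, using $1$-homogeneity and positivity of $H_0$.
\end{itemize}
By invariance of domain, $\Ga$ is open in $\mathcal{W}_r(z)$. Being compact, it is also closed there. By connectedness, and since $\Ga$ is nonempty, $\Ga=\mathcal{W}_r(z)$.

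Finally, $\Om$ is a bounded connected open set with $\partial\Om = \mathcal{W}_r(z)$. The complement of $\mathcal{W}_r(z)$ has exactly two components, namely $B_r^{H_0}(z)$ and the unbounded exterior. A connected open set avoiding its boundary lies in one of these components, and boundedness rules out the exterior. So $\Om\subseteq B_r^{H_0}(z)$, and since $\partial\Om$ is the whole Wulff sphere, $\Om = B_r^{H_0}(z)$.

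This last step is routine but is the only place where care is needed. Alternatively, one could use the second summand, $\nu_H = (x-z)/r$ on $\Ga$, to get the same conclusion by ODE/uniqueness arguments; the topological route seems shortest.
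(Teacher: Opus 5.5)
Your proposal is correct and follows the paper's own route. The Minkowski identity forces $\cM^{H}=\cM_0^{H}$, so the deviation vanishes and Theorem~\ref{thm:W12 stability} gives the rigidity. The paper calls the passage from $H_0(x-z)=r$ on $\Ga$ to $\Ga=\mathcal{W}_r(z)$ and $\Om=B_r^{H_0}(z)$ clear, and your invariance-of-domain and connectedness argument is a valid way to fill it in. Ruling out the exterior component really uses that the exterior is connected and unbounded: a bounded nonempty open subset of it would have a boundary point off the Wulff sphere.
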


\medskip


Our second main theorem establishes an optimal quantitative stability result for the anisotropic Heintze-Karcher inequality. Before stating the theorem, we briefly recall some relevant developments in the literature.

The classical Heintze-Karcher inequality \cite{HK_1978} states that if $\Omega$ is a bounded domain whose boundary $\Gamma$ is smooth enough and has positive mean curvature $\cM >0$, then
\begin{equation}
\int_{\Gamma} \frac{1}{\cM} d \mathcal{H}^{N-1} \ge N | \Omega | ;
\end{equation}
moreover, equality holds if and only if $\Omega$ is a ball.

Quantitative stability for the classical Heintze-Karcher inequality was first established in \cite{MP_JAM}. The following optimal quantitative version was later achieved in \cite[(ii) in Theorem 1.3]{Po_TAMS}:
\begin{equation}\label{eq:intro:HK Po TAMS}
\left\| |x-z|^2 - R^2 \right\|_{L^2(\Gamma)}^2 \le C \left( \int_{\Gamma} \frac{1}{\cM} d \mathcal{H}^{N-1} - N | \Omega | \right) ,
\end{equation}
for some suitable choice of $z \in \mathbb{R}^N$ and $R >0$
(see \cite{Po_TAMS} for details).

Given a Finsler norm $H$, the anisotropic generalisation of the Heintze-Karcher inequality states that if $\Omega$ is a bounded domain whose boundary $\Gamma$ is smooth enough and has positive anisotropic mean curvature $\mathcal{M}^H > 0$, then
\begin{equation}\label{eq:intro_anisotropic HK}
\int_{\Gamma} \frac{H(\nu)}{\mathcal{M}^H} d \mathcal{H}^{N-1} \ge N | \Omega | ;
\end{equation}
moreover, equality holds if and only if $\Omega$ is a Wulff ball.
Various proofs of this inequality were obtained in \cite{HLMG, MX_2013, XZ_2017, DMMN}.

Here, we provide the first optimal quantitative version of \eqref{eq:intro_anisotropic HK}. Not only our result provides the generalisation of \eqref{eq:intro:HK Po TAMS} to the anisotropic setting, but it also provides sharp estimates for the excess and the Gauss maps, which turn out to be new even in the classical (isotropic) setting.


\begin{thm}[Optimal stability for the anisotropic Heintze-Karcher inequality]\label{thm:stability of Heintze-Karcher}
   Let $H\in C^{2,\alpha}\left(\RR^N\setminus\{0\}\right)$ be a Finsler norm satisfying \eqref{Assump:ellipticity assumption}.
    Let $\Om\subset \RR^N$ be a bounded domain with boundary $\Ga$ of class $C^{1,1}$ with $\cM^{H}>0$.\newline
    Then, there exist a positive constant $R$ and a point $z\in \RR^N$, such that 
    \begin{equation}\label{eq:W 1,2 stability for HK}
    \begin{split}
        \left\|H_{0}(x-z)-R\right\|_{L^{2}(\Ga)}^2 + \left\|R\nu_{H}-(x-z) \right\|_{L^{2}\left(\Gamma\right)}^2 & +\left\|\nu - \frac{\nabla H_{0}(x-z)}{\left|\nabla H_{0}(x-z)\right|}\right\|_{L^2(\Ga)}^2 \\
        &\leq C \left(\int_{\Ga}\frac{H(\nu)}{\cM^{H}}\,d\mathcal{H}^{N-1}-N\left|\Om\right|\right),
    \end{split}
    \end{equation}
    where $C$ is a positive constant dependent on $N$, the parameters $\lambda$ and $\Lambda$ appearing in \eqref{Assump:ellipticity assumption}, the diameter of $\Om$ and the $C^{1,1}$ regularity of $\Ga$.
\end{thm}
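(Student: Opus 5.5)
We follow the integral-identity approach used in the isotropic setting \cite{MP_JAM, Po_TAMS}, adapted to the anisotropic operator. Let $u$ solve the anisotropic torsion problem
\[
-\div\big(V'(\nabla u)\big) = 1 \ \text{in } \Om, \qquad u=0 \ \text{on } \Ga,
\]
where $V'$ denotes $\nabla_\xi V$ and $V=\frac12 H^2$. Here $V'(\nabla u)=H(\nabla u)\nabla_\xi H(\nabla u)$, and on $\Ga$ we have $\nabla u=-|\nabla u|\nu$.

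\textbf{Step 1: the identity.} Combine an anisotropic Reilly/Pohozaev-type formula with the Newton-type inequality for the anisotropic Hessian $W=\nabla(V'(\nabla u))$. Since $\tr W=-1$, this gives
\[
\tr(W^2)-\frac{(\tr W)^2}{N}\ge 0,
\]
with equality exactly when $W=-\frac{1}{N}I$, that is, when $u$ is, up to translation, $\frac{1}{2N}(R^2-H_0(x-z)^2)$. Following the proof of \eqref{eq:intro_anisotropic HK} in \cite{DMMN, XZ_2017}, we write
\[
\int_\Ga \frac{H(\nu)}{\cM^H}\, d\mathcal{H}^{N-1} - N|\Om| \ \ge\ c\int_\Om \Big|W+\tfrac1N I\Big|^2 dx + \int_\Ga \frac{H(\nu)}{\cM^H}\Big(\cdots\Big)^2 d\mathcal{H}^{N-1},
\]
where the boundary term is a nonnegative completed square. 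It arises from expanding the boundary integrand $(N-1)\cM^H H(\nabla u)^2$ against $\frac{H(\nu)}{\cM^H}$ via Cauchy--Schwarz, using $\int_\Ga H(\nabla u)H(\nu)\,d\mathcal{H}^{N-1}=|\Om|$. Hence the HK deficit controls $\int_\Om|W+\frac1N I|^2\,dx$.

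\textbf{Step 2: choice of $z$ and $R$, and the quadratic function.} Set
\[
q(x)=\tfrac{1}{2N}H_0(x-z)^2 ,
\]
so that $V'(\nabla q)=\frac{x-z}{N}$, and let $h=q-u$. Then $\nabla(V'(\nabla u))+\frac1N I=\nabla\big(\frac{x-z}{N}-V'(\nabla u)\big)$. We choose $z$ so that $\frac{x-z}{N}-V'(\nabla u)$ has zero mean on $\Om$; this is possible because the field is affine in $z$. Poincaré then gives $L^2(\Om)$ control of this field, and the trace inequality, together with the boundary control from Step 1 or a Poincaré--trace estimate (constants depend on the diameter and the $C^{1,1}$ regularity), gives boundary control
\[
\Big\| V'(\nabla u)-\tfrac{x-z}{N}\Big\|_{L^2(\Ga)}^2\le C\cdot \mathrm{deficit}.
\]
On $\Ga$, $V'(\nabla u)=-H(\nabla u)\nabla_\xi H(\nu)=-|\nabla u|H(\nu)\,\nu_H$. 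Adjusting $z$ appropriately (we may need the mean-zero condition on $\nabla h$ instead), this yields $\|\rho\,\nu_H-(x-z)\|_{L^2(\Ga)}$ small with $\rho=N|\nabla u|H(\nu)$.

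\textbf{Step 3: the three summands.} The function $h$ is harmonic-like: it solves a linear uniformly elliptic equation, obtained by writing $V'(\nabla q)-V'(\nabla u)=A(x)(\nabla q-\nabla u)$ with $A=\int_0^1\nabla^2V$. On $\Ga$, $h=q$. Set $R^2/(2N)$ equal to the mean of $q$ on $\Ga$. Using the elliptic estimate $\|h-\bar h\|_{L^2(\Ga)}\le C\|\nabla h\|_{L^2(\Om)}$ (or the trace of $\delta\nabla h$ with the weighted Poincaré inequality from \cite{Po_TAMS}), we get $\|H_0(x-z)^2-R^2\|_{L^2(\Ga)}^2\le C\cdot\mathrm{deficit}$. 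Since $\Ga$ lies in a bounded annulus around $z$, this yields the first summand.

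Comparing $\rho$ with $R$ needs $\|\,|\nabla u|H(\nu)N-R\,\|_{L^2(\Ga)}$. We get it from $x-z$ being nearly $\rho\,\nu_H$, which gives $H_0(x-z)\approx\rho H_0(\nu_H)=\rho$ by the identity $H_0(\nabla H(\nu))=1$. So $\rho\approx H_0(x-z)\approx R$, and the second summand follows.

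For the excess, use that $\nabla H_0$ is inverse to $\nabla H$ on the unit spheres, $\nabla H_0(\nabla H(\nu))=\nu/H(\nu)$, and that $\nabla H_0$ is Lipschitz away from $0$ (this uses \eqref{Assump:ellipticity assumption}). Thus $\frac{\nabla H_0(x-z)}{|\nabla H_0(x-z)|}$ differs from $\nu$ by at most $C|(x-z)/R-\nu_H|$.

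\textbf{Main obstacle.} The delicate point is Step 1: deriving the exact anisotropic integral identity with a nonnegative boundary remainder under only $C^{1,1}$ boundary and $C^{2,\alpha}$ anisotropy. The solution $u$ is only $C^{1,\beta}$ up to the boundary near degeneracies of $\nabla u$. We would handle this by approximation and by using $|\nabla u|>0$ on $\Ga$ from the Hopf lemma. A second issue is keeping $\Ga$ uniformly away from $z$ so that $\nabla H_0$ is Lipschitz. Here we would use the lower bound $|\nabla u|\ge c$ on $\Ga$ together with the closeness of $x-z$ to $\rho\nu_H$, or a uniform interior-ball argument.
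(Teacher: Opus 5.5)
Your skeleton matches the paper's:
\begin{itemize}
\item the Reilly-based identity is Theorem~\ref{thm:fundamental identity for Heintze-karcher};
\item passing from $\tr(W^2)-\frac{(\tr W)^2}{N}$ to control of $|W-\frac{\tr W}{N}I|^2$ for the non-symmetric $W$ needs the coercivity bound \eqref{eq:equivalency of hessian deficits}, not just Newton's inequality;
\item choosing $z$ as a mean and applying Poincar\'e plus a trace estimate to the field is Lemma~\ref{lem:trace and Poincare type inequality for phi}. The paper gets the boundary part by the divergence theorem with the Hopf weight $\langle\nabla_H u,\nu\rangle\ge c$; your $W^{1,2}$ trace inequality is equivalent.
\end{itemize}

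There is, however, an error that must be fixed. The theorem covers Finsler norms that need not be even. With your normalization $-\div(V'(\nabla u))=1$ you have $\nabla u=-|\nabla u|\nu$ on $\Ga$, so $V'(\nabla u)=|\nabla u|\,H(-\nu)\,\nabla_\xi H(-\nu)$, which is not $-|\nabla u|H(\nu)\nu_H$. Carried out consistently, the Reilly boundary term then involves $\div_\Ga\big(\nabla_\xi H(-\nu)\big)$. That is the anisotropic mean curvature of the reflected norm $\xi\mapsto H(-\xi)$, not $\cM^H$. Likewise, the equality case $W=-\frac1N I$ is $u=\frac{1}{2N}\big(R^2-H_0(z-x)^2\big)$, a reflected Wulff ball. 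As written, you would prove the statement for the wrong norm.

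The fix is to take $\div(V'(\nabla u))=1$ with $u<0$, which is the paper's convention up to the factor $N$. Then $\nabla u=|\nabla u|\nu$ and $V'(\nabla u)=H(\nabla u)\nu_H$ on $\Ga$. This also repairs your sign slips: the relevant field is $V'(\nabla u)-\frac{x-z}{N}$, its gradient is $W-\frac1N I$, and $h=q-u$.

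With that correction, your route is valid and partly different from the paper's.
\begin{itemize}
\item First summand. You take $R^2$ to be the $\Ga$-average of $H_0(x-z)^2$ and apply the Poincar\'e--trace inequality $\|h-\bar h\|_{L^2(\Ga)}\le C\|\nabla h\|_{L^2(\Om)}$ to $h=q-u$. No equation for $h$ is needed, only $|\nabla h|\le\|\nabla^2V_0\|_\infty|V'(\nabla u)-\frac{x-z}{N}|$, which follows from Lemma~\ref{lem:inversion lemma}. The paper instead uses the interior mean of $V_0(x-z)-u$ and again the divergence theorem with the Hopf weight. Either way you only need $R$ bounded below, which follows from the interior ball condition and the $C^{1,1}$ regularity. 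Your ``bounded annulus around $z$'' is not available when the deficit is large, but it is not needed.
\item Second summand. Your argument is simpler than the paper's Step~2. Since $H_0(\nu_H)=1$, the triangle inequality for $H_0$ gives $|NH(\nabla u)-H_0(x-z)|\le\lambda^{-1/2}|NH(\nabla u)\nu_H-(x-z)|$. The second summand then follows from the first and the boundary estimate. The paper instead linearizes $\nabla_\xi H$ between $\nabla u$ and $\nabla V_0(x-z)$. That forces it to keep both away from the origin, using the qualitative Lemma~\ref{lem:qualitative lemma AHK} and a small/large-deficit dichotomy.
\end{itemize}

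Finally, the obstacle you flag for the excess, keeping $\Ga$ away from $z$, does not exist. Your proposed remedy would not have worked anyway, since $L^2$ closeness gives no pointwise separation. Write $\nu=\nabla V_0(\nu_H)/|\nabla V_0(\nu_H)|$, where $|\nabla V_0(\nu_H)|=1/H(\nu)\ge\Lambda^{-1/2}$, and $\nabla H_0(x-z)/|\nabla H_0(x-z)|=\nabla V_0(\frac{x-z}{R})/|\nabla V_0(\frac{x-z}{R})|$. The elementary bound $\big|\frac{a}{|a|}-\frac{b}{|b|}\big|\le\frac{2|a-b|}{|a|}$ and the global Lipschitz bound on $\nabla V_0$ then control the excess pointwise by $C(N,\lambda,\Lambda)\,|\nu_H-\frac{x-z}{R}|$, wherever $z$ lies. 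The paper does the same with $a=\nabla u$, which is bounded below by Lemma~\ref{lem:Hopf lemma for anisotropic torsion}.
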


\begin{rem}
An analogous of Remark \ref{rem: on the three summands} holds true. In fact, the three summands on the left-hand side of \eqref{eq:W 1,2 stability for HK} provide sharp control of the $L^2$-distance of $\Gamma$ to the Wulff shape $\mathcal{W}_{R}(z)$,
\begin{equation*}
 \left\|H_{0}(x-z)-R\right\|_{L^{2}(\Ga)}^2  
 \leq C \left(\int_{\Ga}\frac{H(\nu)}{\cM^{H}}\,d\mathcal{H}^{N-1}-N\left|\Om\right|\right) ,
\end{equation*}
sharp control for the anisotropic Gauss map, namely
\begin{equation*}
\left\|R\nu_{H}-(x-z) \right\|_{L^{2}\left(\Gamma\right)}^2  
 \leq C \left(\int_{\Ga}\frac{H(\nu)}{\cM^{H}}\,d\mathcal{H}^{N-1}-N\left|\Om\right|\right) ,
\end{equation*}
and sharp control for the anisotropic excess, namely
\begin{equation*}
\left\|\nu - \frac{\nabla H_{0}(x-z)}{\left|\nabla H_{0}(x-z)\right|}\right\|_{L^2(\Ga)}^2
\leq C \left(\int_{\Ga}\frac{H(\nu)}{\cM^{H}}\,d\mathcal{H}^{N-1}-N\left|\Om\right|\right).
\end{equation*}

The last two estimates are new even in the isotropic setting.\\
 The optimality of the stability exponent in the last three inequalities, and hence in \eqref{eq:W 1,2 stability for HK}, can be easily verified in the isotropic case by direct computation with ellipsoids. 
\end{rem}
 
As a direct corollary of this result, we clearly have the anisotropic Heintze-Karcher inequality along with the characterisation of its equality case. 
\begin{cor}
     Let $H\in C^{2,\alpha}\left(\RR^N\setminus\{0\}\right)$ be a Finsler norm satisfying \eqref{Assump:ellipticity assumption}.
    Let $\Om\subset \RR^N$ be a bounded domain with boundary $\Ga$ of class $C^{1,1}$ with $\cM^{H}>0$. 
    Then, the anisotropic Heintze-Karcher inequality
    $$
    \int_{\Ga}\frac{H(\nu)}{\cM^{H}}\,d\mathcal{H}^{N-1}-N\left|\Om\right| \ge 0,
    $$
    holds. Moreover, if 
    $$
    \int_{\Ga}\frac{H(\nu)}{\cM^{H}}\,d\mathcal{H}^{N-1}-N\left|\Om\right|=0,
    $$
    then
    
$$
    \Om = B_{R}^{H_0}(z) \quad \text{and}\quad \Ga = \mathcal{W}_{R}(z),
$$
for some $z\in\RR^N$ and $R>0$. 
\end{cor}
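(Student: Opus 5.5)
The plan is to read both assertions off Theorem~\ref{thm:stability of Heintze-Karcher}, so that no new analysis is needed.

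\emph{The inequality.} Under the hypotheses of the corollary, Theorem~\ref{thm:stability of Heintze-Karcher} applies. It gives $R>0$ and $z\in\RR^N$ for which \eqref{eq:W 1,2 stability for HK} holds. The left-hand side of \eqref{eq:W 1,2 stability for HK} is a sum of squared $L^2(\Ga)$-norms, hence nonnegative. Since $C>0$, this forces
$$
\int_{\Ga}\frac{H(\nu)}{\cM^{H}}\,d\mathcal{H}^{N-1}-N|\Om|\ge 0 .
$$
(If the proof of the theorem itself relies on \eqref{eq:intro_anisotropic HK}, this step is only a consistency check. In either case the inequality is available.)

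\emph{The equality case.} Assume now that the deficit vanishes. Then all three summands in \eqref{eq:W 1,2 stability for HK} vanish. In particular $\|H_0(x-z)-R\|_{L^2(\Ga)}=0$. The function $x\mapsto H_0(x-z)$ is continuous, so $H_0(x-z)=R$ for every $x\in\Ga$, that is, $\Ga\subset\mathcal{W}_R(z)$.

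The step needing some care is upgrading this inclusion to $\Ga=\mathcal{W}_R(z)$; it is purely topological.
\begin{enumerate}[(a)]
\item $\mathcal{W}_R(z)$ is the boundary of the bounded convex body $\overline{B_R^{H_0}(z)}$, which contains $z$ in its interior. Radial projection from $z$ makes $\mathcal{W}_R(z)$ homeomorphic to $\SS^{N-1}$. Hence it is a connected topological $(N-1)$-manifold, since $N\ge2$.
\item $\Ga$ is a compact $C^{1,1}$ hypersurface without boundary, hence also a topological $(N-1)$-manifold.
\item By invariance of domain, $\Ga$ is open in $\mathcal{W}_R(z)$. Being compact, it is also closed there.
\item Since $\Ga\neq\emptyset$ and $\mathcal{W}_R(z)$ is connected, $\Ga=\mathcal{W}_R(z)$.
\end{enumerate}

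Finally, $\RR^N\setminus\mathcal{W}_R(z)$ has exactly two components: $B_R^{H_0}(z)$ and an unbounded one. The set $\Om$ is a bounded connected open set disjoint from its boundary $\Ga=\mathcal{W}_R(z)$, so it lies in one of these components, necessarily the bounded one; thus $\Om\subset B_R^{H_0}(z)$. Moreover $\Om$ is closed in $B_R^{H_0}(z)$, because $\partial\Om\cap B_R^{H_0}(z)=\emptyset$. Since $\Om$ is also nonempty and open and $B_R^{H_0}(z)$ is connected, $\Om=B_R^{H_0}(z)$.
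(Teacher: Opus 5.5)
Your proposal is correct and takes the paper's route: the paper presents the corollary as a direct consequence of Theorem~\ref{thm:stability of Heintze-Karcher}, and your topological argument fills in what the paper calls ``clear''; there is also no circularity, since the inequality is already obtained in \eqref{eq:AHK} of Theorem~\ref{thm:fundamental identity for Heintze-karcher} from the identity rather than assumed. One small gap: boundedness alone does not rule out $\Om$ lying in the unbounded component of $\RR^N\setminus\mathcal{W}_R(z)$, but applying your clopen argument to that component would make $\Om$ equal to it, which contradicts boundedness.
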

 
\subsection{Organisation of the paper}  
	The rest of the paper is organised as follows. 

	In Section \ref{sec:Preliminaries} we introduce the setting and recall some basic tools that will be used in the proof of our main results.
	Section \ref{sec:Fundamental identity for ASBT} is devoted to the proofs of some fundamental integral identities, namely Theorem \ref{thm:fundamental identity for ASBT} and Theorem \ref{thm:fundamental identity for Heintze-karcher}, which serve as the starting points for our analysis leading to establish Theorems \ref{thm:W12 stability} and \ref{thm:stability of Heintze-Karcher}, respectively.
    In Section \ref{Sec: W12 stability for ASBT}, we prove Theorem~\ref{thm:W12 stability}.
	We conclude the paper with Section \ref{sec:stability for AHK}, which is devoted to the proof of Theorem~\ref{thm:stability of Heintze-Karcher}.

\section{Setting and Background Result}\label{sec:Preliminaries}

	\subsection{On Finsler norms}
    Let $H\in C^{2,\alpha}\left(\RR^N\setminus\{0\}\right)$ be a Finsler norm (i.e., it satisfies \eqref{AddedAssumpFinslernorm:convex}, \eqref{Assump:positivity of Finsler norm}, \eqref{Assump:Homogeneity assumption}) satisfying \eqref{Assump:ellipticity assumption}.\newline
    Under these assumptions, it is clear that $B_{1}^{H}(0)$ is uniformly convex and has boundary of class $C^{2,\alpha}$. Moreover, recalling the definition of $H_0$ in \eqref{eq:dual norm}, we see that $H$ is the support function of the convex set $B_{1}^{H_0}(0)$ and
	\begin{equation*}
		H(\xi) = \max_{x\neq 0} \frac{\langle x\,,\, \xi \rangle}{H_0(x)}, \quad \text{for} \,\, \xi\in \RR^{N}.
	\end{equation*}
	Therefore, we also have that $H_0$ is the support function of the convex set $B_{1}^{H}(0)$. In particular, we have that $H_0 \in C^{1}(\RR^N\setminus \{0\})$ and that $B_{1}^{H_{0}}(0)= \left\{x\in\RR^N:\,H_{0}(x)<1\right\}$ is also uniformly convex. We refer the reader to \cite[Chapter $1$ and Chapter $2$]{Schneider} for additional details.\newline

    Due to \eqref{Assump:Homogeneity assumption} and Euler's homogeneous function theorem, the following identities hold
    \begin{align}
        & \nabla_{\xi}H(t\xi) = \nabla_{\xi}H(x),\quad \text{for } \xi \in \RR^N\setminus \{0\} \text{ and } t >0;\label{eq:derivative of norm}\\
        &\Big\langle\nabla_{\xi}H(\xi)\,,\, \xi\Big\rangle = H(\xi), \quad \text{for } \xi \in \RR^N\setminus \{0\};\label{eq:inner product grad of norm with point}\\
        &\nabla^{2}_{\xi}H^2(\xi)\xi = \nabla_{\xi}H^2(\xi) \quad \text{for } \xi \in \RR^N\setminus \{0\};\label{eq:homogeneity of second order for H^2}\\
        &\xi^T\nabla_{\xi}H^2(\xi)\xi = 2 H^2(\xi) \quad \text{for } \xi \in \RR^N\setminus\{0\}.\label{eq:homogeneity of first order for H^2}
    \end{align}
    Here, we extend the left-hand side of \eqref{eq:inner product grad of norm with point} and \eqref{eq:homogeneity of first order for H^2}, by continuity, to zero at $\xi = 0$.
    
    Using \eqref{eq:homogeneity of first order for H^2}, we see that taking $\eta=\xi \in \SS^{N-1}$ in \eqref{Assump:ellipticity assumption} we see that $\sqrt{\lambda}\leq H(\eta)\leq \sqrt{\Lambda}$, for every $\eta\in\SS^{N-1}$. So, by the positive $1-$homogeneity of $H$ and the definition of $H_0$, we conclude that, for all $x,\xi\in \RR^N$
    \begin{equation}\label{eq:norm equivalency}
		 \sqrt{\lambda}|\xi|\leq H(\xi)\leq \sqrt{\Lambda}|\xi|\quad \text{and}\quad  \frac{1}{\sqrt{\Lambda}}|x|\leq H_{0}(x)\leq \frac{1}{\sqrt{\lambda}}|x|.
	\end{equation}
    Throughout the paper, we will employ the notation 
    $$V_{0}\coloneqq \frac{1}{2}H_{0}^2.$$ 
The following lemma will be useful throughout the paper and shows that the maps $\nabla_{\xi}V$ and $\nabla V_{0}$ are inverses of each other.

	\begin{lem}[Lemma $3.1$, \cite{CS}]\label{lem:inversion lemma}
		Let $H\in C^{2,\alpha}\left(\RR^N\setminus \{0\}\right)$ be a Finsler norm satisfying \eqref{Assump:ellipticity assumption}. Let $H_0$ be the dual Finsler norm as defined in \eqref{eq:dual norm}. Then 
		\begin{align}
				&H(\nabla H_0(x)) = 1 \text{ for all } x\in \RR^N\setminus\{0\},\label{eq:property 1 grad of norms}\\
				& H_0(\nabla_{\xi} H(\xi)) = 1 \text{ for all } \xi\in \RR^N\setminus\{0\}.\label{eq:property 2 grad of norms}
		\end{align}
		Furthermore, the map $\nabla_{\xi} V:\RR^N\rightarrow \RR^N$ is invertible and 
		\begin{equation}\label{eq:inverse maps}
			\left(\nabla_{\xi} V\right)^{-1} = \nabla V_0. 
		\end{equation}
		Here, $ \nabla_{\xi} V$ and $ \nabla V_0$ are taken to be zero at zero.
	\end{lem}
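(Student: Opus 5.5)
The statement to prove is Lemma \ref{lem:inversion lemma}: the identities \eqref{eq:property 1 grad of norms}, \eqref{eq:property 2 grad of norms} and the inversion formula \eqref{eq:inverse maps}. The plan rests on the convex-duality structure between $H$ and $H_0$ already recalled above. The two norms are support functions of each other's unit balls. By the generalised Cauchy--Schwarz inequality $\langle x,\xi\rangle\le H_0(x)H(\xi)$, which is immediate from \eqref{eq:dual norm}, the maximiser in the definition of $H_0(x)$ is characterised through the gradient.

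\textbf{Step 1: proof of \eqref{eq:property 2 grad of norms}.} Fix $\xi\neq0$ and set $x:=\nabla_\xi H(\xi)$. By convexity and $1$-homogeneity, $H$ is sublinear, so for every $\eta$
$$H(\eta)\ge H(\xi)+\langle\nabla H(\xi),\eta-\xi\rangle=\langle \nabla H(\xi),\eta\rangle,$$
where we used \eqref{eq:inner product grad of norm with point}. Hence $\langle x,\eta\rangle/H(\eta)\le1$ for all $\eta\ne0$, with equality at $\eta=\xi$. Therefore $H_0(x)=1$.

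\textbf{Step 2: proof of \eqref{eq:property 1 grad of norms}.} We would argue symmetrically, using that $H(\xi)=\max_x\langle x,\xi\rangle/H_0(x)$ and that $H_0$ is convex, $1$-homogeneous and $C^1$ away from $0$. The $C^1$ regularity holds because $B_1^H$ is uniformly convex, so its support function $H_0$ is differentiable. Euler's identity $\langle\nabla H_0(x),x\rangle=H_0(x)$ then holds, and the same computation gives $H(\nabla H_0(x))=1$.

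\textbf{Step 3: the inversion formula \eqref{eq:inverse maps}.} Write $\nabla V(\xi)=H(\xi)\nabla H(\xi)$ and $\nabla V_0(x)=H_0(x)\nabla H_0(x)$. Take $\xi\ne0$ and put $x=\nabla V(\xi)$. By $0$-homogeneity of $\nabla H$ and Step 1, $H_0(x)=H(\xi)H_0(\nabla H(\xi))=H(\xi)$. It remains to show $\nabla H_0(x)=\nabla H(\xi)/H(\xi)$, i.e.\ that $\nabla H_0(\nabla H(\xi))=\xi/H(\xi)$.

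This is the main point. Consider $\zeta:=\xi/H(\xi)$, which satisfies $H(\zeta)=1$. From Step 1, the vector $y:=\nabla H(\xi)$ satisfies $H_0(y)=1=\langle y,\zeta\rangle$. Moreover $\langle y',\zeta\rangle\le H_0(y')$ for all $y'$. So the linear function $y'\mapsto\langle y',\zeta\rangle$ touches the convex differentiable function $H_0$ from below at $y$. Hence $\zeta$ is a subgradient of $H_0$ at $y$, and by differentiability $\nabla H_0(y)=\zeta$. This gives $\nabla V_0(\nabla V(\xi))=\xi$. The symmetric argument, exchanging the roles of $H$ and $H_0$ via Step 2, gives $\nabla V(\nabla V_0(x))=x$. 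Together with the convention at $0$, where both maps vanish and only vanish there (since $H(\nabla H_0)=1$ and $H_0(\nabla H)=1$), we get bijectivity and \eqref{eq:inverse maps}.

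The only delicate issue is the differentiability of $H_0$ away from the origin, which is needed in Steps 2 and 3 to pass from subgradient to gradient. For this I would invoke the strict convexity of $B_1^H$, which follows from \eqref{Assump:ellipticity assumption}: it implies the face of $B_1^H$ in each direction is a single point, hence $\partial H_0(x)$ is a singleton, as in \cite{Schneider}. Alternatively, one can observe that $\nabla V$ is a $C^{1}$ map with positive definite Jacobian, hence a diffeomorphism, and that $V_0$ is the Legendre transform of $V$. Its gradient is then the inverse map, which directly yields \eqref{eq:inverse maps} and $C^1$ regularity at once.
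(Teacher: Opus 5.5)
Your proof is correct, and it is essentially the standard convex-duality (support-function/subgradient) argument behind \cite[Lemma 3.1]{CS}, which the paper does not reprove but simply imports, remarking that the argument there does not need $H$ to be symmetric --- your proof bears this out, since you never use $H(-\xi)=H(\xi)$. One small slip in Step~3: the target should read $\nabla H_0(x)=\xi/H(\xi)$ (your ``i.e.'' reformulation $\nabla H_0(\nabla H(\xi))=\xi/H(\xi)$, obtained via the $0$-homogeneity of $\nabla H_0$, is the right one), and $H_0(x)=H(\xi)$ follows from the $1$-homogeneity of $H_0$, not from the $0$-homogeneity of $\nabla H$.
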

     Although \cite[Lemma $3.1$]{CS} was stated for norms, an inspection of the proof shows that the result remains valid for Finsler norms.
    
As a direct consequence of this lemma we see that if $H\in C^{2,\alpha}(\RR^N\setminus\{0\})$, then $H_{0}\in C^{2,\alpha}(\RR^N\setminus\{0\})$. Moreover, from the positive $2$-homogeneity of $V$ and $V_{0}$, we see that $\nabla_{\xi}V,\nabla V_{0}\in {\rm Lip}(\RR^N;\RR^N)$. In addition, \eqref{Assump:ellipticity assumption}, Lemma~\ref{lem:inversion lemma}, and the positive $0$-homogeneity of $\nabla^2 V_0$ imply that 
\begin{equation}\label{eq:ellipticity of V_0}
    \Lambda^{-1}|\xi|^2\leq \xi^T\nabla^{2}V_{0}(\eta)\xi\leq \lambda^{-1}|\xi|^2,\,\, \text{for any}\,\, \eta\in \RR^N\setminus\{0\}\,\,\text{and any}\,\, \xi \in\RR^N.
\end{equation}
As a consequence 
$$
N^{1/2}\sqrt{\lambda}\leq \left\|\nabla^2 V\right\|_{L^{\infty}(\RR^N)}\leq N^{1/2}\Lambda^{1/2},
$$ 
and 
$$
N^{1/2}\Lambda^{-1/2}\leq \left\|\nabla^2 V_{0}\right\|_{L^{\infty}(\RR^N)}\leq N^{1/2}\lambda^{-1/2}.
$$
Therefore, we have that 
\begin{equation}\label{eq:inversion formulas}
    \begin{split}
    &\left| \nabla_{\xi}V(\xi_{1}) - \nabla_{\xi}V(\xi_{2})\right|\leq \left\|\nabla_{\xi}^{2}V\right\|_{L^{\infty}(\RR^N)}\left|\xi_1-\xi_2\right|\leq N^{1/2}\Lambda^{1/2}\left|\xi_1-\xi_2\right|,\quad \text{for all}\quad \xi_{1}, \xi_{2}\in \RR^N,\\
    &\left|\nabla V_{0}(x_{1}) - \nabla V_{0}(x_{2})\right|\leq \left\|\nabla^{2} V_{0}\right\|_{L^{\infty}(\RR^N)}\left|x_1-x_2\right|\leq N^{1/2}\lambda^{-1/2}\left|x_1-x_2\right|,\quad \text{for all}\quad x_{1}, x_{2}\in \RR^N.
    \end{split}
\end{equation}

\subsection{On the Anisotropic Torsion Problem and the Anisotropic Mean Curvature}\label{subsec: Weak solutions of Anisotropic torsion}

We start this subsection by presenting the definition of $C^{k,\gamma}$ domains that we adopt in this paper (see e.g., \cite[Page 94]{GT}).

    \begin{definit}\label{def:regularity of a domain}
        Given an integer $k\geq 1$ a real number $\gamma\in [0,1]$ and a bounded domain $D \subset \mathbb{R}^N$ ($N\geq 2$) with boundary $\partial D$. We say that $\partial D$ is of class $C^{k,\gamma}$, if there exist positive constants $K$ and $r$ such that for any $x\in \partial D$, there exists a one-to-one map 
            \begin{equation*}
                \psi_{x}: B_{r}(x) \rightarrow D \subset \mathbb{R}^N,  
            \end{equation*}
            such that 
            \begin{align*}
                &\psi_{x}(B_{r}(x)\cap D) \subset \mathbb{R}_{+}^N , \quad \psi_{x}(B_{r}(x)\cap \partial D) \subset \partial\mathbb{R}_{+}^N,\\
                &\|\psi_x\|_{C^{k,\gamma}\left(B_{r}(x)\right)}\le K \quad \text{and}\quad \|\psi_{x}^{-1}\|_{C^{k,\gamma}(D)}\leq K.
            \end{align*}
            Here above, $B_{r}(x)$ denotes the $N-$dimensional euclidean ball centered at $x\in \mathbb{R}^N$ with radius $r$. 

            In particular, for every $x\in \Ga$, $B_{r}(x)\cap \Ga$ can be written as the graph of a $C^{k,\gamma}$ function of $N-1$ variables, whose $C^{k,\gamma}$ norm is bounded above by $K$.
    \end{definit} 
    Throughout the paper, when we say that a constant depends on the $C^{k,\gamma}$ regularity of $\Ga$, it means that it can be estimated in terms of the constants $K$ and $r$ in Definition \ref{def:regularity of a domain}.
    
Now, let $\Om\subset \RR^N$ be a bounded domain (i.e., an open connected set), whose boundary $\Ga$ is of class $C^{1,1}$. 
Let $u\in W^{1,2}_{0}(\Omega)$ be the (unique) weak solution of
\begin{equation}\label{Anisotropic torsion}
		\begin{cases}
			\Delta_{H}u \coloneqq \div\left(\nabla_{H} u\right) = N, \, & \text{in}\,\, \Omega\\
			u = 0,\, & \text{on}\,\, \Ga,\\
		\end{cases}
	\end{equation}
where 
\begin{equation}\label{anisotropic gradient}
     \nabla_{H}u \coloneqq \nabla_{\xi}V(\nabla u).
    \end{equation}
This means that for any $\varphi\in W^{1,2}_{0}(\Om)$, we have 
\begin{equation}\label{weak sol}
    \int_{\Om}\Big\langle \nabla_{H}u\,,\,\nabla\varphi\Big\rangle\,dx= -N\int_{\Om}\varphi\,dx,
\end{equation}

Given the regularity assumptions on $H$ and on $\Ga$, it follows from~\cite[Theorem $1$]{Lieb} that $u\in C^{1,\beta}\left(\overline{\Om}\right)$ for some $\beta \in (0,1)$, and
\begin{equation}\label{eq:C 1 alpha reg for torsions}
    \left\|u\right\|_{C^{1,\beta}(\overline{\Om})}\leq K_1,
\end{equation}
 where $K_1$ is a positive constant dependent on $N$, the constants $\lambda$ and $\Lambda$ in \eqref{Assump:ellipticity assumption}, the diameter of $\Om$ and the $C^{1,1}$ regularity of $\Ga$.

Since $\Ga$ is assumed to be of class $C^{1,1}$, it satisfies a uniform interior and exterior sphere condition. Since the assumptions set on $H$ imply that $B_{1}^{H_0}$ is uniformly convex, we have that $\Ga$ also satisfies a uniform interior and exterior Wulff sphere condition, i.e.  there exists $r_{\mathcal{W}} >0$ such that for every $x\in \Ga$, and every $0\leq r\leq r_{\mathcal{W}}$ there exists $\overline{x}\in \Omega$ such that 
        \begin{equation}\label{eq:uniform interior Wulff sphere condition}
            B_{r}^{H_0}\left(\overline{x}\right)\subset \Omega \quad \text{and}\quad \overline{B_{r}^{H_0}\left(\overline{x}\right)}\cap \Ga = \{x\}.
        \end{equation}
Moreover, since the radius of the uniform interior and exterior sphere condition may be estimated by constants dependent on the $C^{1,1}$ regularity of $\Ga$, then \eqref{eq:ellipticity of V_0} implies that the radius of the uniform interior and exterior Wulff sphere condition may be estimated by constants dependent on the $C^{1,1}$ regularity of $\Ga$ and the parameters $\lambda$ and $\Lambda$. 

Given two measurable sets $A,B\subset\RR^N$, their distance, $\dist(A,B)$ is defined by 
$$
\dist(A,B) = \inf_{a\in A,\,b\in B}\left|a-b\right|.
$$

The following is a Hopf lemma with an explicit constant which depends only on the radius of the uniform interior Wulff sphere condition. We refer to~\cite{CRS} for an Hopf Lemma for more general anisotropic equations.
 \begin{lem}\label{lem:Hopf lemma for anisotropic torsion}
    Let $H\in C^{2,\alpha}\left(\RR^N\setminus\{0\}\right)$ be a Finsler norm satisfying \eqref{Assump:ellipticity assumption}. Let $\Omega\subset \RR^N$ be a bounded domain with boundary $\Gamma$ of class $C^{1,1}$. Let $u\in W^{1,2}_{0}(\Om)$ be the weak solution of \eqref{Anisotropic torsion}.
    Then $u\leq 0$ in $\Om$ and 
    \begin{equation}\label{eq:explicit Hopf}
    \min_{x\in \Ga}\left| \nabla u(x)\right|\geq \frac{r_{\mathcal{W}}}{\sqrt{\Lambda}},
    \end{equation} 
    where $r_{\mathcal{W}}$ is the radius of the uniform interior Wulff sphere condition of $\Om$, and $\Lambda$ is the parameter appearing in \eqref{Assump:ellipticity assumption}.
    
    In particular,
    $$
    \dist\left(\left\{x\in \overline{\Om}:\,\nabla u(x)=0\right\}\,,\,\Ga\right)>0.
    $$
 \end{lem}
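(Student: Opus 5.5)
The approach is to compare $u$ with explicit anisotropic barriers: quadratic functions built from the dual norm $H_0$, which are the model solutions of \eqref{Anisotropic torsion} on Wulff balls. The argument has four steps.

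\emph{Step 1: the model solution.} For a centre $\bar x$ and radius $r>0$ set
\[
w(y) := \tfrac12\left(H_0(y-\bar x)^2 - r^2\right) = V_0(y-\bar x) - \tfrac{r^2}{2}.
\]
Then $\nabla w(y) = \nabla V_0(y-\bar x)$, and by \eqref{eq:inverse maps} we get $\nabla_H w = \nabla_\xi V(\nabla V_0(y-\bar x)) = y-\bar x$. Hence $\Delta_H w = \div(y-\bar x) = N$ in $B_r^{H_0}(\bar x)$, with $w=0$ on $\mathcal{W}_r(\bar x)$. This holds classically away from $\bar x$, and weakly on the whole ball, since $\nabla_H w$ is Lipschitz.

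\emph{Step 2: the sign of $u$.} Test \eqref{weak sol} with $\varphi = u^+\in W^{1,2}_0(\Om)$. This gives
\[
\int_{\{u>0\}}\langle \nabla_\xi V(\nabla u),\nabla u\rangle\,dx = -N\int_\Om u^+\,dx \le 0.
\]
The integrand equals $2V(\nabla u)=H(\nabla u)^2\ge 0$ by \eqref{eq:homogeneity of first order for H^2}. Therefore $\nabla u^+=0$ a.e., so $u^+=0$ and $u\le 0$.

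\emph{Step 3: comparison on the interior Wulff ball.} Fix $x\in\Ga$ and take $r=r_{\mathcal W}$ together with $\bar x$ as in \eqref{eq:uniform interior Wulff sphere condition}, so $B:=B_r^{H_0}(\bar x)\subset\Om$. On $\partial B$ we have $w=0\ge u$. The operator $\Delta_H$ is monotone: $\langle\nabla_\xi V(a)-\nabla_\xi V(b),a-b\rangle\ge \lambda|a-b|^2$ by \eqref{Assump:ellipticity assumption}, where the constant comes from integrating the Hessian along the segment, using $0$-homogeneity of $\nabla^2 V$. The weak comparison principle then applies: test the difference of the two weak equations with $(u-w)^+\in W^{1,2}_0(B)$ to obtain $u\le w$ in $B$. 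Since $u(x)=w(x)=0$, it follows that for $t>0$ small
\[
\frac{u(x)-u(x-t e)}{t}\ \ge\ \frac{w(x)-w(x-te)}{t}
\]
for any direction $e$ pointing out of $B$ at $x$. Now let $t\to0$ and use $u\in C^{1,\beta}$ from \eqref{eq:C 1 alpha reg for torsions}. This gives $\langle\nabla u(x),e\rangle\ge\langle\nabla w(x),e\rangle$. Choose $e=\nabla w(x)/|\nabla w(x)|$; this is the outer normal of $B$ at $x$, and it coincides with $\nu(x)$ by tangency. Then $|\nabla u(x)|\ge|\nabla w(x)|$.

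\emph{Step 4: the lower bound.} By Step 1, $\nabla w(x)=\nabla V_0(x-\bar x)=H_0(x-\bar x)\nabla H_0(x-\bar x) = r\,\nabla H_0(x-\bar x)$. By \eqref{eq:property 1 grad of norms} and \eqref{eq:norm equivalency},
\[
1=H(\nabla H_0)\le\sqrt\Lambda\,|\nabla H_0|,
\]
so $|\nabla w(x)|\ge r/\sqrt\Lambda$. Taking the minimum over $x\in\Ga$ yields \eqref{eq:explicit Hopf}.

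For the final claim, $\nabla u$ is continuous on $\overline\Om$ and $|\nabla u|\ge r_{\mathcal W}/\sqrt\Lambda>0$ on the compact set $\Ga$. Hence $\nabla u\ne0$ in a neighbourhood of $\Ga$, so the closed set $\{\nabla u=0\}$ is compact and disjoint from $\Ga$, which gives positive distance.

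The main point requiring care is the comparison principle in Step 3. The barrier $w$ is only $C^{1,1}$ at $\bar x$, so the argument must use the weak formulation and the monotonicity inequality rather than a pointwise maximum principle. Once that is in place, the derivative comparison at the tangency point is routine.
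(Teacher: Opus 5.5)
Your proof is correct and follows the paper's argument: test with $u^+$ to get $u\le 0$, compare $u$ with the barrier $\tfrac12\bigl(H_0^2(\cdot-\bar x)-r_{\mathcal W}^2\bigr)$ on the interior Wulff ball, differentiate at the tangency point, and use $H(\nabla H_0)=1$ with \eqref{eq:norm equivalency} to get $|\nabla H_0|\ge 1/\sqrt\Lambda$. The one addition is that you justify the comparison step through the monotonicity of $\nabla_\xi V$ and the weak formulation, which the paper simply invokes as ``by comparison.''
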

 \begin{proof}
     Substituting $\varphi = \max\{0,u\}$ in \eqref{weak sol} we obtain
     \begin{equation*}
         -N\int_{\{u>0\}} u\,dx= \int_{\{u>0\}}\Big\langle \nabla_{H}u \,,\,\nabla u\Big\rangle\,dx.
     \end{equation*}
     Using \eqref{eq:inner product grad of norm with point} together with the fact that $\nabla_{H}u = H(\nabla u)\nabla_{\xi}H(\nabla u)$, it follows that 
     \begin{equation*}
         0\geq -N\int_{\{u>0\}} u\,dx= \int_{\{u>0\}}H^2(\nabla u)\,dx\geq 0.
     \end{equation*}
     This implies that $u\leq 0$ in $\Om$ since $u\in C^{1,\beta}(\overline{\Om})$. 

     Given $x_0\in \Om$, let $r(x_0) = \min_{y\in\Ga}H_{0}\left(y-x_0\right)$. 
     Consider the function 
     $$
     u_{x_0}(x) = \frac{H_{0}^2(x-x_0)-r(x_0)^2}{2}.
     $$
     Note that $u_{x_0}$ is a solution of the Dirichlet problem
     \begin{equation*}
         \begin{cases}
             \Delta_{H} u_{x_0} = N,&\text{in}\,\,B_{r(x_0)}^{H_0}(x_0)\\
             u_{x_0} = 0,&\text{on}\,\,\partial B_{r(x_0)}^{H_0}(x_0).
         \end{cases}
     \end{equation*}
     Since $u\leq 0$ in $\Om$, it follows that $u\leq u_{x_0}$ in $\partial B_{r(x_0)}^{H_0}(x_0)$, so, by comparison, it follows that 
     \begin{equation}\label{eq:comparison for torsion}
      u(x)\leq \frac{H_{0}^{2}(x-x_{0}) - r(x_0)^2}{2}\quad \text{for all} \quad x\in  B_{r(x_0)}^{H_0}(x_0).
     \end{equation}
     
    Since $\Ga$ satisfies a uniform interior Wulff sphere condition with some radius $r_{\mathcal{W}}$, for every $x\in \Ga$, there exists $\overline{x}\in \Om$ such that 
     \begin{equation*}
    B_{r_W}^{H_0}\left(\overline{x}\right)\subset \Omega \quad \text{and}\quad \overline{B_{r_W}^{H_0}\left(\overline{x}\right)}\cap \Ga = \{x\}. 
\end{equation*}
Using this, \eqref{eq:comparison for torsion} leads to
 \begin{equation}
            \begin{split}
                \partial_{\nu} u(x)&= -\lim_{t\to 0^+}\frac{u(x-t\nu(x))-u(x)}{t}
                \geq \lim_{t\to 0^+}\frac{H_{0}^{2}\left(x-\overline{x}\right)-H_{0}^{2}\left(x-\overline{x}-t\nu(x)\right)}{2 t}\\
                &= H_{0}\left(x-\overline{x}\right)\Big\langle\nabla H_{0}\left(x-\overline{x}\right)\,,\,\nu(x)\Big\rangle =H_{0}\left(x-\overline{x}\right)\left\langle\nabla H_{0}\left(x-\overline{x}\right)\,,\,\frac{\nabla H_{0}(x-\overline{x})}{\left|\nabla H_{0}(x-\overline{x})\right|}\right\rangle\\
                &= H_{0}(x-\overline{x})\left|\nabla H_{0}(x-\overline{x})\right|,
            \end{split}
        \end{equation}
        where, in the last equality, we used the fact that at $x$, $\nu(x)= \frac{\nabla H_{0}(x-\overline{x})}{\left|\nabla H_0 (x-\overline{x})\right|}$.\newline 
Using \eqref{eq:norm equivalency}, \eqref{eq:property 1 grad of norms} and the equality $H_{0}(x-\overline{x}) = r_{\mathcal{W}}$, it follows that 
\begin{equation*}
     \partial_{\nu} u(x) \geq \frac{r_{\mathcal{W}}}{\sqrt{\Lambda}}.
\end{equation*}

In particular, we have 
$$
\min_{x\in\Ga} \left|\nabla u(x)\right|\geq \frac{r_{\mathcal{W}}}{\sqrt{\Lambda}},
$$
and, since $\nabla u\in C^{0,\beta}(\overline{\Om})$ we also have
\begin{equation*}
    \dist\left(\Ga,\mathcal{C}\right)>0,
\end{equation*}
where $\mathcal{C} =\left\{x\in\overline{\Om}:\,\nabla u(x)=0\right\}$.
 \end{proof}

Regarding the second-order regularity of $u$, by~\cite[Proposition $2.3$]{CS} it follows that $u\in W^{2,2}(\Om)$, and by \cite[Theorem $2.5$]{ACCFM} we have that $\nabla_{H} u\in W^{1,2}(\Om)$ and 
\begin{equation*}
    \left\|\nabla_{H} u\right\|_{W^{1,2}(\Om)}\leq K_{2}, 
\end{equation*}
 where $K_{2}$ is a positive constant dependent on $N$, the parameters $\lambda$ and $\Lambda$ that appear in \eqref{Assump:ellipticity assumption}, the measure of $\Om$ and the $C^{1,1}$ regularity of $\Gamma$. We note that, although~\cite[Theorem $2.5$]{ACCFM} is stated in the less general case where $H$ is a norm and $\Ga$ is of class $C^{2,\gamma}$, the result remains valid when $H$ is a Finsler norm (as in our setting) and $\Ga$ is of class $C^{1,1}$. See also the comment after the statement of \cite[Theorem $2.5$]{ACCFM} for more details.
 
The following statement is the anisotropic version of Reilly's identity, proved in \cite[Subsection $3.3$]{DMMN}. 
It is stated and proved in \cite[Subsection $3.3$]{DMMN} for the case $H\in C^{\infty}(\RR^N\setminus\{0\})$ and $\Ga$ is of class $C^{\infty}$. However, an inspection of the proof shows that the result remains valid in the case where $H\in C^{2,\alpha}(\RR^N\setminus\{0\})$ and $\Ga$ is of class $C^{2,\gamma}$ (for some $\gamma\in(0,1)$). The reason for this is that in this setting, due to Lemma~\ref{lem:Hopf lemma for anisotropic torsion}, one applies classical elliptic regularity theory (see, e.g., \cite[Theorem 6.6]{GT} and \cite[Theorem $9.19$]{GT}) to conclude that $u\in C^{2,\beta}(\overline{\Omega_{\delta}})$ for some $\beta\in(0,1)$, where $\Om_{\delta}=\{x\in\Om:\dist(x\,,\,\Ga)<\delta\}$ for some $\delta>0$ small enough.

    \begin{prop}[Proposition $3.3$,\cite{DMMN}]\label{prop: prop 3.3 DMMN}
    Let $H\in C^{2,\alpha}(\RR^N\setminus\{0\})$ be a Finsler norm satisfying \eqref{Assump:ellipticity assumption}.
    Let $\Omega$ be a bounded domain with boundary of class $C^{2,\gamma}$ $($for some $\gamma\in(0,1))$. Let $u\in W^{1,2}_{0}(\Om)$ be the solution of \eqref{Anisotropic torsion}, then 
    \begin{equation*}
        \int_{\Om}\left[N^2-\tr(W^2)\right]\,dx = (N-1)\int_{\Ga} \cM^{H} H(\nabla u)^2 H(\nu)\,d\mathcal{H}^{N-1},
    \end{equation*}
     where $\nu$ denotes the outward unit normal to $\Ga$ and where
     \begin{equation}\label{eq:grad of anisotropic grad}
         W \coloneqq \nabla \left(\nabla_H u\right)= \nabla^{2}_{\xi}V(\nabla u)\nabla^2 u
     \end{equation}
    \end{prop}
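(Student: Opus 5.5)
The statement to prove is Proposition~\ref{prop: prop 3.3 DMMN}, the anisotropic Reilly identity for the torsion function. The plan is to derive it from a pointwise differential identity in the interior, integrate it, and then compute the boundary term. The boundary computation uses that $u$ is a level-set function for $\Gamma$ with $\nabla u$ parallel to $\nu$.

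\emph{Interior identity.} Write $a(\xi)=\nabla_\xi V(\xi)$, so that $\nabla_H u = a(\nabla u)$ and $W_{ij}=\partial_j (a_i(\nabla u))$. Since $\tr W = \Delta_H u = N$, we have $N^2 = (\tr W)^2$. I would establish the divergence-form identity
\begin{equation*}
(\tr W)^2-\tr(W^2) = \div\Big( (\tr W)\,\nabla_H u - W\nabla_H u \Big),
\end{equation*}
with the convention $(W\nabla_H u)_j = \partial_i(a_j)\,a_i$ in a suitable ordering. To check it, expand the divergence: $\partial_j\big(a_i\partial_i a_j\big) = \partial_j a_i\,\partial_i a_j + a_i\,\partial_i\partial_j a_j$. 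The last term is $a_i\partial_i(\tr W)$, which is exactly $\div((\tr W)\nabla_H u) - (\tr W)^2$. This is the standard "$\sigma_2$ is divergence-free" computation. Since $\tr W\equiv N$, the right-hand side simplifies to $\div(N\nabla_H u - W\nabla_H u)$, which only needs $W\in L^2$ and $\nabla_H u\in W^{1,2}$. The regularity recalled above ($\nabla_H u\in W^{1,2}$) justifies this weakly. To handle the critical set I would integrate over $\{u<-\varepsilon\}$ or use $C^{2,\beta}$ regularity near $\Gamma$, and pass to the limit.

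\emph{Boundary term.} Integrating and using the divergence theorem gives
$$\int_\Omega [N^2-\tr W^2] = \int_\Gamma \big(N\langle\nabla_H u,\nu\rangle - \langle W\nabla_H u,\nu\rangle\big).$$
On $\Gamma$ we have $u=0$ and $u\le 0$, and Lemma~\ref{lem:Hopf lemma for anisotropic torsion} gives $\nabla u = |\nabla u|\nu$. Hence, by $1$-homogeneity of $H$ and $0$-homogeneity of $\nabla_\xi H$,
$$\nabla_H u = H(\nabla u)\nabla_\xi H(\nu) = |\nabla u| H(\nu)\nu_H, \qquad \langle \nabla_H u,\nu\rangle = |\nabla u|H(\nu)^2/ \ldots,$$
or more cleanly $\langle\nabla_H u,\nu\rangle = H(\nabla u)H(\nu)$. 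Set $X=\nabla_H u = g\,\nu_H$ with $g=H(\nabla u)$, and decompose $\div X$ on $\Gamma$ into tangential and normal parts:
$$\div X = \div_\Gamma X + \langle \partial_\nu X,\nu\rangle.$$
Here
$$\div_\Gamma X = g\,\div_\Gamma \nu_H + \langle\nabla_\Gamma g,\nu_H\rangle = (N-1)g\,\cM^H + \langle \nabla_\Gamma g,\nu_H\rangle.$$
The term $\langle W\nabla_H u,\nu\rangle$ is the derivative of $X$ in the direction $X$, paired with $\nu$. Writing $X = g\langle\nu_H,\nu\rangle\nu + g\,\nu_H^T$, with tangential part $\nu_H^T$, this equals
$$g\langle\nu_H,\nu\rangle\,\langle\partial_\nu X,\nu\rangle + g\,\langle \partial_{\nu_H^T}X,\nu\rangle .$$
Using $N=\tr W=\div X$ to eliminate $\langle \partial_\nu X,\nu\rangle$, and $\langle\nu_H,\nu\rangle = H(\nu)$, the integrand becomes
$$N g H(\nu) - gH(\nu)\big(N-\div_\Gamma X\big) - g\langle\partial_{\nu_H^T}X,\nu\rangle.$$
This reduces to
$$(N-1)\cM^H g^2 H(\nu) + gH(\nu)\langle\nabla_\Gamma g,\nu_H\rangle - g\langle\partial_{\nu_H^T}X,\nu\rangle.$$

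\emph{Main obstacle.} The hardest step is to show that the last two tangential terms cancel, possibly only after integration by parts on $\Gamma$. Since $\nu_H^T$ is tangent, $\langle \partial_{\nu_H^T}X,\nu\rangle = \partial_{\nu_H^T}\langle X,\nu\rangle - \langle X,\partial_{\nu_H^T}\nu\rangle$. The first piece is $\partial_{\nu_H^T}(gH(\nu))$. The second piece is $g\langle\nu_H,d\nu(\nu_H^T)\rangle$, and I would show it equals $g\,\partial_{\nu_H^T}H(\nu)$ by the chain rule, because $\nabla_\xi H(\nu)=\nu_H$. The tangential derivative of $H(\nu)$ then cancels, leaving $gH(\nu)\big(\langle\nabla_\Gamma g,\nu_H\rangle - \partial_{\nu_H^T}g\big)=0$. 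So the cancellation should in fact be pointwise, and this gives the identity. Finally, the passage from smooth data to $C^{2,\gamma}$ boundary with $H\in C^{2,\alpha}$ uses the $C^{2,\beta}$ regularity of $u$ near $\Gamma$, which makes all boundary traces classical.
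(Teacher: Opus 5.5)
Your proposal is correct. The identity $\div\left(N\nabla_H u - W\nabla_H u\right) = N^2-\tr(W^2)$ holds in the distributional sense as soon as $\nabla_H u\in W^{1,2}(\Om)$, and your tangential computation on $\Ga$ is right: the two terms involving the derivative along $\nu_H^T$ cancel pointwise because $\langle \nu_H, \partial_\tau \nu\rangle = \partial_\tau\big(H(\nu)\big)$, so the boundary integrand is exactly $(N-1)\cM^H H(\nabla u)^2H(\nu)$. The paper does not redo this computation: it quotes \cite{DMMN} and adds only that $u\in C^{2,\beta}(\overline{\Om_{\delta}})$ near $\Ga$, by Lemma~\ref{lem:Hopf lemma for anisotropic torsion} and classical Schauder theory, which is precisely the regularity you use to make the boundary traces classical, so your argument is essentially the standard Reilly-type derivation behind that citation, written out in full.
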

    We will rely on this result in Section~\ref{sec:Fundamental identity for ASBT} to establish several integral identities which will serve as the starting points for our quantitative analysis in Sections \ref{Sec: W12 stability for ASBT} and \ref{sec:stability for AHK}.

We conclude this section with the anisotropic analogue of the classical Minkowski identity, which was originally proved for smooth compact hypersurfaces in \cite[Theorem $1.1$]{HL}. However, a standard approximation argument yields the result for compact hypersurfaces of class $C^{1,1}$.
\begin{lem}\label{lemma:Anisotropic Minkowski}
   Let $H\in C^{2,\alpha}(\RR^N\setminus\{0\})$ be a Finsler norm satisfying \eqref{Assump:ellipticity assumption}. Let $\Om\subset \RR^N$ be a bounded domain with boundary $\Ga$ of class $C^{1,1}$. Then, for any $z\in \RR^N$, the following identity holds:
    \begin{equation}\label{eq:Anisotropic Minkowski}
        P_{H}\left(\Om\right)= \int_{\Ga}\cM^{H}\Big\langle x-z\,,\, \nu\Big\rangle\,d\mathcal{H}^{N-1}(x),
    \end{equation}
    where, $P_{H}(\Om)$ denotes the isoperimetric perimeter of $\Om$ as defined in \eqref{eq: def_anisotropic perimeter}, $\cM^H$ denotes the anisotropic mean curvature as defined in \eqref{eq:anisotropic mean curvature} and $\nu$ denotes the outward unit normal to $\Ga$. 
\end{lem}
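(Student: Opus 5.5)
The statement to prove is the anisotropic Minkowski identity (Lemma \ref{lemma:Anisotropic Minkowski}). The plan is to first prove it for $\Ga$ of class $C^2$ using the tangential divergence theorem on the closed hypersurface $\Ga$, and then pass to $C^{1,1}$ by approximation.

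\textbf{The $C^2$ case.} Fix $z\in\RR^N$ and set $X(x):=x-z$, with tangential part $X^T=X-\langle X,\nu\rangle\nu$. The idea is to integrate a suitable tangential divergence that produces both $H(\nu)$ and $\cM^H\langle x-z,\nu\rangle$.

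The natural candidate is the vector field
$$Y:=H(\nu)\,X-\langle X,\nu\rangle\,\nu_H,$$
which is tangential. Indeed, by \eqref{eq:inner product grad of norm with point},
$$\langle Y,\nu\rangle=H(\nu)\langle X,\nu\rangle-\langle X,\nu\rangle\langle\nabla_\xi H(\nu),\nu\rangle=0.$$
Since $\Ga$ is closed, $\int_\Ga \div_\Ga Y=0$. Its divergence is computed term by term:
\begin{itemize}
\item $\div_\Ga(H(\nu)X)=(N-1)H(\nu)+\langle \nabla_\Ga H(\nu),X\rangle$, and $\nabla_\Ga H(\nu)=d\nu^T\nabla_\xi H(\nu)=d\nu(\nu_H)$, where $d\nu$ is the symmetric shape operator restricted to tangent vectors.
\item $\div_\Ga(\langle X,\nu\rangle\nu_H)=\langle X,\nu\rangle(N-1)\cM^H+\langle\nabla_\Ga\langle X,\nu\rangle,\nu_H\rangle$, and $\nabla_\Ga\langle X,\nu\rangle=d\nu(X^T)$, since $\nabla_\Ga X$ acts as the identity on tangent directions.
\end{itemize}
Using the symmetry of $d\nu$,
$$\langle d\nu(X^T),\nu_H\rangle=\langle X^T,d\nu(\nu_H^T)\rangle=\langle X,d\nu(\nu_H)\rangle,$$
because $d\nu(\nu)=0$ and the image of $d\nu$ is tangential. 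So the two gradient terms cancel, giving
$$\div_\Ga Y=(N-1)\bigl(H(\nu)-\cM^H\langle X,\nu\rangle\bigr).$$
Integrating over $\Ga$ yields \eqref{eq:Anisotropic Minkowski}. This step is routine once $Y$ is chosen; a careful check of the symmetry cancellation is the only delicate point.

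\textbf{Passage to $C^{1,1}$.} For $C^{1,1}$ boundaries, $\nu$ is Lipschitz, so $\cM^H$ is defined $\mathcal{H}^{N-1}$-a.e. and is bounded. I would approximate $\Om$ by smooth domains $\Om_k$, for instance via mollified local graph representations or level sets of a mollified signed distance, with uniformly bounded $C^{1,1}$ norms and
$$\nu_k\to\nu \text{ uniformly}, \qquad d\nu_k\to d\nu \text{ a.e. with uniform bounds},$$
after pulling back to $\Ga$ by the normal projection. Since $H\in C^{2,\alpha}$ near $\SS^{N-1}$, the curvature $\cM^H_k=\frac1{N-1}\tr\bigl(\nabla^2_\xi H(\nu_k)\,d\nu_k\bigr)$ converges a.e. and stays uniformly bounded. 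Dominated convergence, together with convergence of the area Jacobians, then passes the identity to the limit.

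I expect this approximation to be the main technical obstacle, although it is standard. An alternative that avoids it is to run the divergence argument directly on the $C^{1,1}$ surface, since Lipschitz tangential vector fields on a $C^{1,1}$ closed hypersurface still satisfy $\int_\Ga\div_\Ga Y=0$ (by Rademacher's theorem and local charts); $Y$ is Lipschitz because $\nu$ is Lipschitz and $\nabla_\xi H$ is $C^1$ near the sphere.
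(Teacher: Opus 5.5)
Your proof is correct. Its skeleton (prove the smooth case, then pass to $C^{1,1}$) is the same as the paper's. The difference is that the paper does not prove the smooth case: it quotes it from He and Li and then asserts that a standard approximation handles $C^{1,1}$ boundaries.

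Your tangential field $Y=H(\nu)X-\langle X,\nu\rangle\nu_H$ gives a short self-contained proof of exactly the identity needed:
\begin{itemize}
\item Tangentiality is Euler's relation $\langle\nabla_\xi H(\nu),\nu\rangle=H(\nu)$.
\item The two gradient terms cancel because $d\nu=\nabla_\Ga\nu$ is symmetric and annihilates $\nu$.
\item The signs match the paper's conventions: on a Wulff sphere of radius $r$ one has $\cM^H=1/r$, and both sides equal $N|\Om|/r$.
\end{itemize}

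For the $C^{1,1}$ step, your second option is the more economical one. $Y$ is Lipschitz and tangential, and the chain rule and the symmetry of $d\nu$ hold $\mathcal{H}^{N-1}$-a.e. Then $\int_\Ga\div_\Ga Y\,d\mathcal{H}^{N-1}=0$ follows in charts, since compactly supported Lipschitz functions have derivatives with zero integral. So no approximation is needed at all.

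If you keep the approximation route, the step ``$d\nu_k\to d\nu$ a.e.\ after pulling back'' is where the real work lies. The approximating surfaces move, and a.e.\ convergence of ambient second derivatives says nothing on the null set $\Ga$. A Lebesgue-point argument on $\Ga$ for a mollified signed distance settles it; the paper itself gives no more detail here than you do.
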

\section{Fundamental identities for the anisotropic soap bubble theorem and Heintze-Karcher inequality}\label{sec:Fundamental identity for ASBT}

This section is devoted to some fundamental identities that serve as the starting points of our analysis. We note that, in the particular case where $H$ is the Euclidean norm (that is, in the isotropic setting), these identities return those presented in \cite{MP_IUMJ, MP_JAM}.

\begin{prop}\label{prop:Fundamental identity Anisotropic Bubbles}
Let $H\in C^{2,\alpha}\left(\RR^N\setminus\{0\}\right)$ be a Finsler norm satisfying \eqref{Assump:ellipticity assumption}. Let $\Omega\subset \RR^N$ be a bounded with boundary $\Ga$ of class $C^{2,\gamma}$ and let $u\in W^{1,2}_{0}(\Om)$ be a weak solution of \eqref{Anisotropic torsion}.\newline
In this setting, the following identity holds
    \begin{equation}\label{eq:fundamental identity}
    \begin{split}
        &\frac{1}{N-1}\int_{\Om}\left[\tr(W^2) - \frac{\tr(W)^2}{N}\right]\,dx + \mathcal{M}_0^{H}\int_{\Ga}\left|H(\nabla u)-\frac{1}{\mathcal{M}_{0}^{H}}\right|^2H(\nu)\,d\mathcal{H}^{N-1}\\
        &= \int_{\Ga} \left(\mathcal{M}_{0}^{H}-\mathcal{M}^{H}\right) H(\nabla u)^2 H(\nu)\,d\mathcal{H}^{N-1},
    \end{split}
    \end{equation}
    where $W$ is as in \eqref{eq:grad of anisotropic grad}, $\cM_{0}^{H}$ is as in \eqref{eq:reference curvature} and $\nu$ is the outward unit normal to $\Ga$.
\end{prop}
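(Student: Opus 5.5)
The plan is to expand both sides of \eqref{eq:fundamental identity} and reduce everything to Proposition~\ref{prop: prop 3.3 DMMN}, the divergence theorem applied to the equation in \eqref{Anisotropic torsion}, and the definition \eqref{eq:reference curvature} of $\cM_0^H$. Since $\Gamma$ is of class $C^{2,\gamma}$, as explained before Proposition~\ref{prop: prop 3.3 DMMN} we have $u\in C^{2,\beta}$ near $\Gamma$, $u\in W^{2,2}(\Om)$ and $\nabla_H u\in W^{1,2}(\Om)$. So all the boundary integrals make sense and the divergence theorem can be applied to $\nabla_H u$. A key algebraic simplification is that $\tr(W)=\div(\nabla_H u)=\Delta_H u = N$ a.e. in $\Om$, so $\tr(W)^2/N = N$.

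\textbf{Step 1 (interior term).} Proposition~\ref{prop: prop 3.3 DMMN} gives
\[
\int_\Om \tr(W^2)\,dx = N^2|\Om| - (N-1)\int_\Ga \cM^H H(\nabla u)^2 H(\nu)\,d\mathcal{H}^{N-1}.
\]
Combining this with $\tr(W)^2/N=N$, the first term of \eqref{eq:fundamental identity} becomes
\[
\frac{1}{N-1}\left(N^2|\Om| - N|\Om|\right) - \int_\Ga \cM^H H(\nabla u)^2H(\nu)\,d\mathcal{H}^{N-1}
= N|\Om| - \int_\Ga \cM^H H(\nabla u)^2H(\nu)\,d\mathcal{H}^{N-1}.
\]

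\textbf{Step 2 (boundary term).} Expanding the square, the second term of \eqref{eq:fundamental identity} equals
\[
\cM_0^H\int_\Ga H(\nabla u)^2H(\nu) - 2\int_\Ga H(\nabla u)H(\nu) + \frac{1}{\cM_0^H}P_H(\Om).
\]
By \eqref{eq:reference curvature}, the last summand is $N|\Om|$. For the middle summand we use the boundary behaviour of $u$. Since $u=0$ on $\Ga$ and $u\le0$ in $\Om$ (Lemma~\ref{lem:Hopf lemma for anisotropic torsion}), we have $\nabla u = |\nabla u|\,\nu$ on $\Ga$, with $|\nabla u|>0$. Then \eqref{eq:derivative of norm} gives $\nabla_\xi H(\nabla u)=\nabla_\xi H(\nu)$, so by \eqref{eq:inner product grad of norm with point}
\[
\langle \nabla_H u,\nu\rangle = H(\nabla u)\langle\nabla_\xi H(\nu),\nu\rangle = H(\nabla u)H(\nu)\quad\text{on } \Ga.
\]
The divergence theorem then yields $\int_\Ga H(\nabla u)H(\nu)\,d\mathcal{H}^{N-1}=\int_\Om \Delta_H u\,dx = N|\Om|$. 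Hence the second term equals $\cM_0^H\int_\Ga H(\nabla u)^2H(\nu) - N|\Om|$.

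\textbf{Step 3 (conclusion).} Adding the results of Steps 1 and 2, the two $N|\Om|$ terms cancel. What remains is $\int_\Ga(\cM_0^H-\cM^H)H(\nabla u)^2H(\nu)\,d\mathcal{H}^{N-1}$, which is the right-hand side of \eqref{eq:fundamental identity}.

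The computation itself is routine. The only point needing care is justifying the boundary identity $\nabla u=|\nabla u|\nu$ together with the divergence theorem at the available regularity. This is handled by the $C^{2,\beta}$ regularity near $\Ga$ and the fact that $\nabla_H u\in W^{1,2}(\Om)$.
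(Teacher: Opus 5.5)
Your proposal is correct and follows essentially the same route as the paper. Both arguments use Proposition~\ref{prop: prop 3.3 DMMN} together with $\tr(W)=N$ for the interior term. Both then expand the square using $P_H(\Om)/\cM_0^H=N|\Om|$ and the identity $\int_\Ga H(\nabla u)H(\nu)\,d\mathcal{H}^{N-1}=N|\Om|$ (from $\nu=\nabla u/|\nabla u|$ and the divergence theorem), and conclude by cancelling the $N|\Om|$ terms.
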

\begin{proof}
Using Proposition \ref{prop: prop 3.3 DMMN} we have that 
\begin{equation}\label{eq:step 1 in fundamental identity}
    \begin{split}
        \frac{1}{N-1}\int_{\Om}\Big[\tr(W^2)-\frac{\tr(W)^2}{N}\Big]\,dx&= N|\Om|-\frac{1}{N-1}\int_{\Om} \Big[N^2-\tr(W^2)\Big]\,dx\\
        &= N|\Om|-\int_{\Ga} \cM^{H} H(\nabla u)^2 H(\nu)\,d\mathcal{H}^{N-1}\\
        &=N|\Om| + \int_{\Ga}\left(\cM_{0}^{H}-\cM^{H}\right)H^2(\nabla u) H(\nu)\,d\mathcal{H}^{N-1}\\
        &- \int_{\Ga}\cM_{0}^{H}H^2(\nabla u)H(\nu)\,d\mathcal{H}^{N-1}.
    \end{split}
\end{equation}

Since for any $x\in\Ga$, $\nu(x) =\frac{\nabla u(x)}{|\nabla u(x)|}$, if follows from \eqref{eq:derivative of norm} and \eqref{Anisotropic torsion} that 
\begin{equation*}
\int_{\Ga} H(\nabla u)H(\nu)\,d\mathcal{H}^{N-1} =  \int_{\Ga}\Big\langle H(\nabla u)\nabla_{\xi}H(\nabla u)\,,\, \nu\Big\rangle d\mathcal{H}^{N-1}= \int_{\Om}\div(\nabla_{H} u)\,dx = N|\Om|.   
\end{equation*}
So, recalling \eqref{eq:reference curvature} we see that
\begin{equation}\label{eq:step 2 in fundamental identity}
    \begin{split}
   &\cM_{0}^{H} \int_{\Ga}\left|H(\nabla u)-\frac{1}{\cM_{0}^{H}}\right|^2 H(\nu)\,d\mathcal{H}^{N-1}\\
   &=\cM_{0}^{H} \int_{\Ga} H^2(\nabla u) H(\nu)\,d\mathcal{H}^{N-1}-2\int_{\Ga}H(\nabla u)H(\nu)\,d\mathcal{H}^{N-1} + \frac{1}{\cM_{0}^{H}}\int_{\Ga} H(\nu)\,d\mathcal{H}^{N-1}\\
   &=\cM_{0}^{H} \int_{\Ga} H^2(\nabla u) H(\nu)\,d\mathcal{H}^{N-1}-N|\Om|. \end{split}
\end{equation}
The result follows by combining \eqref{eq:step 1 in fundamental identity} and \eqref{eq:step 2 in fundamental identity}.
\end{proof}
 
The next theorem provides a fundamental identity for the anisotropic soap bubble theorem. Such an identity will serve as the starting point for our analysis leading to Theorem \ref{thm:W12 stability}. 

\begin{thm}[A fundamental identity for the anisotropic soap bubble theorem]\label{thm:fundamental identity for ASBT}
 Let $H\in C^{2,\alpha}\left(\RR^N\setminus\{0\}\right)$ be a Finsler norm satisfying \eqref{Assump:ellipticity assumption}. Let $\Omega\subset \RR^N$ be a bounded with boundary $\Ga$ of class $C^{2,\gamma}$ $($for some $\gamma\in (0,1))$ and let $u\in W^{1,2}_{0}(\Om)$ be a weak solution of \eqref{Anisotropic torsion}.\\
Let $\cM^{H}_{0}$ be as in \eqref{eq:reference curvature}.

    Then, for any $z\in \RR^N$, the following identity holds
    \begin{equation}\label{eq:unpacked identity}
        \begin{split}
            &\frac{1}{N-1}\int_{\Om}\Big[\tr(W^2)-\frac{\tr(W)^2}{N}\Big]\,dx + \cM_{0}^{H}\int_{\Ga} H(\nu) \left|H(\nabla u)-\frac{1}{\cM_{0}^{H}}\right|^2\,dx\\
            &= \int_{\Ga} \left(\mathcal{M}^{H}-\mathcal{M}_{0}^{H}\right) H(\nabla u) \Big\langle \nabla_{H} u - (x-z)\,,\,\nu\Big\rangle\,d\mathcal{H}^{N-1}\\
        &+ \int_{\Ga}\left(\cM_{0}^{H}-\cM^{H}\right)\left(H(\nabla u)-\frac{1}{\cM_{0}^{H}}\right)\Big\langle x-z\,,\,\nu\Big\rangle\,d\mathcal{H}^{N-1},
        \end{split}
    \end{equation}
where $W$ is as in \eqref{eq:grad of anisotropic grad}, $\nabla_H u$ is as in \eqref{anisotropic gradient} and where $\nu$ denotes the outward unit normal to $\Ga$.
\end{thm}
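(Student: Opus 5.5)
The plan is to start from Proposition~\ref{prop:Fundamental identity Anisotropic Bubbles}. Its left-hand side is exactly the left-hand side of \eqref{eq:unpacked identity}, so it suffices to show that, for every $z\in\RR^N$,
\begin{equation*}
\int_{\Ga} \left(\cM_{0}^{H}-\cM^{H}\right) H(\nabla u)^2 H(\nu)\,d\mathcal{H}^{N-1}
\end{equation*}
equals the sum of the two boundary integrals on the right-hand side of \eqref{eq:unpacked identity}. This is purely a boundary computation that uses only facts available on $\Ga$.

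First rewrite the integrand on $\Ga$. Since $u\le 0$ in $\Om$ and $u=0$ on $\Ga$ (Lemma~\ref{lem:Hopf lemma for anisotropic torsion}), we have $\nu=\nabla u/|\nabla u|$ there. By \eqref{eq:derivative of norm} and \eqref{eq:inner product grad of norm with point}, and since $\nabla_H u=H(\nabla u)\nabla_\xi H(\nabla u)$, we get
\begin{equation*}
\lan \nabla_H u,\nu\ran = H(\nabla u)\lan\nabla_\xi H(\nu),\nu\ran = H(\nabla u)H(\nu).
\end{equation*}
Therefore $H(\nabla u)^2H(\nu)=H(\nabla u)\lan\nabla_H u,\nu\ran$. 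Now split $\nabla_H u=(\nabla_H u-(x-z))+(x-z)$. This gives
\begin{equation*}
\int_{\Ga}(\cM_0^H-\cM^H)H(\nabla u)^2H(\nu)
=\int_{\Ga}(\cM_0^H-\cM^H)H(\nabla u)\lan\nabla_H u-(x-z),\nu\ran+\int_{\Ga}(\cM_0^H-\cM^H)H(\nabla u)\lan x-z,\nu\ran .
\end{equation*}

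In the last integral, write $H(\nabla u)=\bigl(H(\nabla u)-\frac{1}{\cM_0^H}\bigr)+\frac{1}{\cM_0^H}$. The piece containing $H(\nabla u)-1/\cM_0^H$ is exactly the second term on the right of \eqref{eq:unpacked identity}. What remains is the term
\begin{equation*}
\frac{1}{\cM_0^H}\int_\Ga(\cM_0^H-\cM^H)\lan x-z,\nu\ran\,d\mathcal{H}^{N-1}.
\end{equation*}

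The key step is to show that this leftover term equals minus the first piece of the previous display, i.e., that adding it converts $(\cM_0^H-\cM^H)$ into $(\cM^H-\cM_0^H)$ in the first integral. Equivalently, I need
\begin{equation*}
2\int_\Ga(\cM_0^H-\cM^H)H(\nabla u)\lan\nabla_H u-(x-z),\nu\ran+\frac{1}{\cM_0^H}\int_\Ga(\cM_0^H-\cM^H)\lan x-z,\nu\ran=0 .
\end{equation*}
Since this seems unnatural, I would instead check directly that the leftover term vanishes, and then recheck the sign in the first term. By the divergence theorem $\int_\Ga\lan x-z,\nu\ran=N|\Om|$, so $\cM_0^H\int_\Ga\lan x-z,\nu\ran = \cM_0^H N|\Om|=P_H(\Om)$. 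By the Minkowski identity (Lemma~\ref{lemma:Anisotropic Minkowski}), $\int_\Ga\cM^H\lan x-z,\nu\ran=P_H(\Om)$. Hence the leftover term is zero.

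That leaves the sign mismatch in the first term. I expect this to be the real obstacle, so I would examine the sign conventions carefully. Using the same identities, $\int_\Ga H(\nabla u)\lan\nabla_Hu-(x-z),\nu\ran=\int_\Ga H^2(\nabla u)H(\nu)-\int_\Ga H(\nabla u)\lan x-z,\nu\ran$, and these quantities can be compared via \eqref{eq:step 2 in fundamental identity}. If the signs do not reconcile, the likely fix is that Proposition~\ref{prop: prop 3.3 DMMN} in fact yields $(\cM^H-\cM_0^H)$ with the opposite orientation convention. I would then carry out the algebra above with that sign, which makes the decomposition match \eqref{eq:unpacked identity} term by term.
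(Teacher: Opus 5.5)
Your argument is the same as the paper's. You start from Proposition~\ref{prop:Fundamental identity Anisotropic Bubbles} and use $\nu=\nabla u/|\nabla u|$ on $\Ga$ to write $H(\nabla u)^2H(\nu)=H(\nabla u)\langle\nabla_H u,\nu\rangle$. You then insert $\pm(x-z)$ and $\pm 1/\cM_0^H$, and kill the leftover $\frac{1}{\cM_0^H}\int_\Ga(\cM_0^H-\cM^H)\langle x-z,\nu\rangle\,d\mathcal{H}^{N-1}$ with the divergence theorem and Lemma~\ref{lemma:Anisotropic Minkowski}. All of this is correct. What it proves is the identity with $(\cM_0^H-\cM^H)$, not $(\cM^H-\cM_0^H)$, in front of $H(\nabla u)\langle\nabla_H u-(x-z),\nu\rangle$. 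The sign mismatch you found is a misprint in the statement, not an obstacle. The paper's proof reaches exactly your expression in \eqref{eq:step 1 in unpacked}, then flips the sign of that term without justification in \eqref{eq:step 2 in unpacked}. This is harmless for Lemma~\ref{lem: stability for phi}, which only uses H\"older bounds on absolute values.

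The gap is your last paragraph: there is no sign convention to repair, and you should not try to force the printed version. Proposition~\ref{prop: prop 3.3 DMMN} has the right sign. For $\Om=B_r^{H_0}(0)$ one has $W=I_N$, $\cM^H=1/r$ and $H(\nabla u)=r$ on $\Ga$, and both sides equal $N(N-1)|\Om|$. Moreover, whatever sign convention is adopted, your decomposition gives the same factor in both boundary terms, so no convention can create the opposite relative sign printed in the statement. In fact the printed identity is false. Since your corrected identity holds, the printed one holds at a given $z$ if and only if $\int_\Ga(\cM^H-\cM_0^H)H(\nabla u)\langle\nabla_H u-(x-z),\nu\rangle\,d\mathcal{H}^{N-1}=0$. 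This fails for $H=|\cdot|$, $N=2$, $z=0$ and the ellipse $x_1^2/a^2+x_2^2/b^2<1$ with $a>b$. There $u=\frac{a^2b^2}{a^2+b^2}\bigl(\frac{x_1^2}{a^2}+\frac{x_2^2}{b^2}-1\bigr)$. Parametrize $x=(a\cos t,b\sin t)$ and set $A=\frac{a^2+b^2}{2}$, $B=\frac{a^2-b^2}{2}>0$. Writing $\kappa=\cM^H$ for the curvature, one gets $|\nabla u|\langle\nabla u-x,\nu\rangle=-\frac{2a^2b^2(a^2-b^2)}{(a^2+b^2)^2}\cos 2t$, $\kappa\,d\mathcal{H}^1=\frac{ab\,dt}{A-B\cos 2t}$ and $d\mathcal{H}^1=\sqrt{A-B\cos 2t}\,dt$. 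Since $\int_0^{2\pi}\frac{\cos 2t}{A-B\cos 2t}\,dt>0>\int_0^{2\pi}\cos 2t\sqrt{A-B\cos 2t}\,dt$, the integral is strictly negative. So the correct conclusion of your proof is the identity with $(\cM_0^H-\cM^H)$ in the first term on the right, and you should state it as such.
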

\begin{proof}
Using the fact $\nu = \frac{\nabla u}{|\nabla u|}$, it follows from \eqref{eq:derivative of norm} and \eqref{eq:inner product grad of norm with point} that
\begin{equation}\label{eq:step 1 in unpacked}
    \begin{split}
        &\int_{\Ga} \left(\mathcal{M}_{0}^{H}-\mathcal{M}^{H}\right) H(\nabla u)^2 H(\nu)\,d\mathcal{H}^{N-1}\\
        &= \int_{\Ga} \left(\mathcal{M}_{0}^{H}-\mathcal{M}^{H}\right) H(\nabla u) \Big\langle H(\nabla u)\nabla_{\xi}H(\nabla u)\,,\,\nu\Big\rangle\,d\mathcal{H}^{N-1}\\
        &=\int_{\Ga} \left(\mathcal{M}_{0}^{H}-\mathcal{M}^{H}\right) H(\nabla u) \Big\langle \nabla_{H}u - (x-z)\,,\,\nu\Big\rangle\,d\mathcal{H}^{N-1}\\
        &+ \int_{\Ga}\left(\cM_{0}^{H}-\cM^{H}\right)\left(H(\nabla u)-\frac{1}{\cM_{0}^{H}}\right)\Big\langle x-z\,,\,\nu\Big\rangle\,d\mathcal{H}^{N-1} + \frac{1}{\cM_{0}^{H}}\int_{\Ga}\left(\cM_{0}^{H}-\cM^{H}\right)\Big\langle x-z\,,\,\nu\Big\rangle\,d\mathcal{H}^{N-1},
    \end{split}
\end{equation}
where $z\in\RR^N$ is arbitrary and $\nabla_{H}u$ is as defined in \eqref{anisotropic gradient}.\\
Using Lemma~\ref{lemma:Anisotropic Minkowski} and recalling the definition of $\cM_{0}^{H}$ we have that 
\begin{equation*}
    \frac{1}{\cM_{0}^{H}}\int_{\Ga}\cM^{H}\Big\langle x-z\,,\,\nu\Big\rangle \,d\mathcal{H}^{N-1}= N|\Om|.
\end{equation*}
On the other hand, the divergence theorem implies that 
\begin{equation*}
    \int_{\Ga} \Big\langle x-z\,,\,\nu \Big\rangle \,d\mathcal{H}^{N-1} = N|\Om|.
\end{equation*}
As a consequence, we follow up on \eqref{eq:step 1 in unpacked} to conclude 
\begin{equation}\label{eq:step 2 in unpacked}
    \begin{split}
        \int_{\Ga} \left(\mathcal{M}_{0}^{H}-\mathcal{M}^{H}\right) H(\nabla u)^2 H(\nu)\,d\mathcal{H}^{N-1}&=\int_{\Ga} \left(\mathcal{M}^{H}-\mathcal{M}_{0}^{H}\right) H(\nabla u) \Big\langle \nabla_{H} u - (x-z)\,,\,\nu\Big\rangle\,d\mathcal{H}^{N-1}\\
        &+ \int_{\Ga}\left(\cM_{0}^{H}-\cM^{H}\right)\left(H(\nabla u)-\frac{1}{\cM_{0}^{H}}\right)\Big\langle x-z\,,\,\nu\Big\rangle\,d\mathcal{H}^{N-1}.
    \end{split}
\end{equation}
The result now follows by combining this and Proposition~\eqref{prop:Fundamental identity Anisotropic Bubbles}.
\end{proof}

The next theorem provides a fundamental identity for the anisotropic Heintze-Karcher inequality. It will serve as the starting point for our analysis leading to Theorem \ref{thm:stability of Heintze-Karcher}.

\begin{thm}[A fundamental identity for the anisotropic Heintze-Karcher inequality]\label{thm:fundamental identity for Heintze-karcher} Let\\ $H\in C^{2,\alpha}\left(\RR^N\setminus\{0\}\right)$ be a Finsler norm satisfying \eqref{Assump:ellipticity assumption}.
    Let $\Om\subset \RR^N$ be a bounded domain with boundary $\Ga$ of class $C^{2,\gamma}$ $($for some $\gamma\in (0,1))$ with $\cM^{H}>0$. Let $u\in W^{1,2}_{0}(\Om)$ be a weak solution to \eqref{Anisotropic torsion}. Then, the following identity holds:
    \begin{equation}
        \frac{1}{N-1}\int_{\Om}\left[\tr(W^2)-\frac{\tr(W)^2}{N}\right]\,dx + \int_{\Ga}\left|1-\cM^{H}H(\nabla u)\right|^2\frac{H(\nu)}{\cM^H}\,d\mathcal{H}^{N-1} = \int_{\Ga}\frac{H(\nu)}{\cM^{H}}\,d\mathcal{H}^{N-1}-N\left|\Om\right|,
    \end{equation}
    where $W$ is as in \eqref{eq:grad of anisotropic grad}.
    In particular, we have 
    \begin{equation}\label{eq:AHK}
        \int_{\Ga}\frac{H(\nu)}{\cM^{H}}\,d\mathcal{H}^{N-1}\geq N\left|\Om\right|.
    \end{equation}
    Moreover, this last inequality remains valid if $\Ga$ is of class $C^{1,1}$.
\end{thm}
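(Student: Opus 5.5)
The plan is to start from Proposition~\ref{prop: prop 3.3 DMMN}, exactly as in the proof of Proposition~\ref{prop:Fundamental identity Anisotropic Bubbles}. The target identity will then follow by adding and subtracting suitable boundary integrals, with $\cM^H>0$ used to divide by $\cM^H$.

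\textbf{Step 1.} From \eqref{eq:step 1 in fundamental identity} we already have
\begin{equation*}
\frac{1}{N-1}\int_{\Om}\Big[\tr(W^2)-\frac{\tr(W)^2}{N}\Big]dx = N|\Om| - \int_{\Ga}\cM^H H(\nabla u)^2 H(\nu)\,d\mathcal{H}^{N-1}.
\end{equation*}
Here we use $\tr W=\Delta_H u = N$, so that $\tr(W)^2/N=N$ and $\int_\Om N\,dx = N|\Om|$.

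\textbf{Step 2.} Expand the square:
\begin{equation*}
\int_\Ga |1-\cM^H H(\nabla u)|^2\frac{H(\nu)}{\cM^H}
=\int_\Ga \frac{H(\nu)}{\cM^H} - 2\int_\Ga H(\nabla u)H(\nu) + \int_\Ga \cM^H H(\nabla u)^2 H(\nu).
\end{equation*}
The middle integral equals $N|\Om|$, as computed in the proof of Proposition~\ref{prop:Fundamental identity Anisotropic Bubbles}. That computation uses $\nu=\nabla u/|\nabla u|$ on $\Ga$ (valid by Lemma~\ref{lem:Hopf lemma for anisotropic torsion}), \eqref{eq:derivative of norm}, \eqref{eq:inner product grad of norm with point} and the divergence theorem applied to $\nabla_H u$. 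Adding this to Step 1, the terms $\int_\Ga \cM^H H(\nabla u)^2H(\nu)$ cancel. The constants combine as $N|\Om|-2N|\Om| = -N|\Om|$, which gives the claimed identity.

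\textbf{Step 3.} The inequality \eqref{eq:AHK} follows for $C^{2,\gamma}$ boundaries because both terms on the left are nonnegative. The boundary term is nonnegative since $\cM^H>0$ and $H(\nu)>0$. For the interior term, $\tr(W^2)\ge \tr(W)^2/N$ is a Cauchy--Schwarz inequality on the eigenvalues of $W$. The eigenvalues are real because $W=\nabla^2_\xi V(\nabla u)\nabla^2u$ is the product of a positive definite symmetric matrix $A$ and a symmetric matrix. Such a product is similar to $A^{1/2}\nabla^2u\,A^{1/2}$, which is symmetric; equivalently, apply Cauchy--Schwarz to the symmetric matrix $A^{1/2}\nabla^2 u A^{1/2}$, whose square has the same trace as $W^2$. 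Where $\nabla u=0$ the matrix $\nabla^2_\xi V$ is undefined. There one should either note that $\nabla_Hu\in W^{1,2}$ and $\{\nabla u=0\}$ can be handled a.e. (on that set $W=0$ a.e. whenever $\nabla_H u$ has vanishing gradient a.e., giving $\tr W=0\ne N$, so it has measure zero), or simply invoke the real-eigenvalue argument on its complement.

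\textbf{Step 4.} The $C^{1,1}$ extension is the main obstacle. I would approximate $\Om$ by $C^{2,\gamma}$ (even smooth) domains $\Om_k$ with uniformly bounded $C^{1,1}$ norms, converging in $C^{1,\beta}$, with $\cM^H_{\Om_k}\to\cM^H$ a.e. on $\Ga$ (after pulling back via the normal projection). Because $\Ga\in C^{1,1}$, $\cM^H\in L^\infty$ but positivity is only a.e., so I would need $\cM^H_k>0$ for the approximants; this may fail. The fallback is to apply the inequality to $\Om_k$ with $H(\nu)/\max\{\cM^H_k,\epsilon\}$ and control the error, or to assume $\cM^H\ge c>0$ with the standard reading of the hypothesis. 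Then pass to the limit by dominated convergence (Fatou in the right direction, since we need a lower bound on $\int H(\nu)/\cM^H$) and use $|\Om_k|\to|\Om|$.
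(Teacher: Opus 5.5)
Steps 1--3 follow the paper's proof exactly: the relation \eqref{eq:step 1 in fundamental identity} from Proposition~\ref{prop: prop 3.3 DMMN}, expansion of the square, and $\int_\Ga H(\nabla u)H(\nu)\,d\mathcal{H}^{N-1}=N|\Om|$ by the divergence theorem. Your justification of $\tr(W^2)\ge \tr(W)^2/N$, via $A^{1/2}\nabla^2u\,A^{1/2}$ and Stampacchia on $\{\nabla u=0\}$, is correct and fills in what the paper leaves implicit.

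For the $C^{1,1}$ claim the paper only cites a ``standard approximation argument''. Your concern about the sign of $\cM^H_k$ on the approximants is therefore legitimate, but the limit passage you sketch does not close as written. Fatou goes the wrong way here. Even for nonnegative integrands it only gives $\int_\Ga H(\nu)/\cM^H \le \liminf_k \int_{\Ga_k} H(\nu_k)/\cM^H_k$, which is an upper bound on the quantity you need to bound from below. Plain dominated convergence would need a uniform positive lower bound on $\cM^H_k$, which is exactly what is missing. You also cannot apply \eqref{eq:AHK} to $\Om_k$ when $\cM^H_k$ changes sign, so ``control the error'' has to be made concrete.

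The fix is to use the sign-free consequence of your Steps 1 and 3. For any $C^{2,\gamma}$ domain $D$, with $h=H(\nabla u_D)$,
\[
\int_{\pa D}\cM^H_D\, h^2H(\nu)\,d\mathcal{H}^{N-1}\le N|D| =\int_{\pa D}hH(\nu)\,d\mathcal{H}^{N-1}.
\]
Set $m_\ve=\max\{\cM^H_D,\ve\}$. The pointwise inequality $1/m_\ve\ge 2h-m_\ve h^2$, which is just $(1-m_\ve h)^2\ge 0$, together with $m_\ve-\cM^H_D\le \ve+\max\{-\cM^H_D,0\}$ and $H(\nu)\le\sqrt{\Lambda}$, gives
\[
\int_{\pa D}\frac{H(\nu)}{m_\ve}\,d\mathcal{H}^{N-1}\ge N|D|-\sqrt{\Lambda}\,\|h\|_{L^\infty(\pa D)}^2\int_{\pa D}\left(\ve+\max\{-\cM^H_D,0\}\right)d\mathcal{H}^{N-1}.
\]
Apply this with $D=\Om_k$:
\begin{itemize}
\item $\|h_k\|_{L^\infty}$ is bounded uniformly in $k$ by \eqref{eq:C 1 alpha reg for torsions}, since the approximants have uniform $C^{1,1}$ bounds.
\item $\int_{\Ga_k}\max\{-\cM^H_k,0\}\to 0$ by bounded a.e.\ convergence after pullback, because $\cM^H>0$ a.e.\ on $\Ga$.
\item The left-hand integrands are bounded by $\sqrt{\Lambda}/\ve$, so dominated convergence yields $\int_\Ga H(\nu)/\max\{\cM^H,\ve\}\,d\mathcal{H}^{N-1}\ge N|\Om|-C\ve$.
\end{itemize}
Finally let $\ve\downarrow 0$ and use monotone convergence. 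This needs no extra hypothesis such as $\cM^H\ge c>0$.
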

\begin{proof}
    Let $u\in W^{1,2}_{0}(\Om)$ be the weak solution to \eqref{Anisotropic torsion}.
    By Proposition \ref{prop:Fundamental identity Anisotropic Bubbles}, we have that 
\begin{equation}\label{eq:step 1 in HK identity}
    \frac{1}{N-1}\int_{\Om}\left[\tr(W^2)-\frac{\tr(W)^2}{N}\right]\,dx = N\left|\Om\right| -\int_{\Ga}\cM^H H(\nabla u)^2H(\nu)\,d\mathcal{H}^{N-1}.
\end{equation}
Noting that 
\begin{equation*}
    \begin{split}
    -\int_{\Ga}\cM^H H(\nabla u)^2 H(\nu)\,d\mathcal{H}^{N-1}&= -\int_{\Ga}\left|1-\cM^{H}H(\nabla u)\right|^2 \frac{H(\nu)}{\cM^{H}}\,d\mathcal{H}^{N-1} + \int_{\Ga}\frac{H(\nu)}{\cM^H}\,d\mathcal{H}^{N-1}\\
    &-2\int_{\Ga} H(\nabla u)H(\nu)\,d\mathcal{H}^{N-1},
    \end{split}
\end{equation*}
we follow up on \eqref{eq:step 1 in HK identity} to obtain 
\begin{equation}\label{eq:step 2 in HK identity}   
\begin{split}
    \frac{1}{N-1}&\int_{\Om}\left[\tr(W^2)-\frac{\tr(W)^2}{N}\right]\,dx + \int_{\Ga}\left|1-\cM^{H}H(\nabla u)\right|^2 \frac{H(\nu)}{\cM^{H}}\,d\mathcal{H}^{N-1}\\
    &=-2\int_{\Ga} H(\nabla u)H(\nu)\,d\mathcal{H}^{N-1}+ N\left|\Om\right| + \int_{\Ga}\frac{H(\nu)}{\cM^H}\,d\mathcal{H}^{N-1}.
    \end{split}
\end{equation}
Since $\nu = \frac{\nabla u}{|\nabla u|}$ on $\Ga$, \eqref{eq:derivative of norm} and the divergence theorem imply that 
\begin{equation*}
    \int_{\Ga} H(\nabla u)H(\nu)\,d\mathcal{H}^{N-1} = \int_{\Ga} \Big\langle H(\nabla u)\nabla_{\xi}H(\nabla u)\,,\,\nu\Big\rangle d\mathcal{H}^{N-1} = \int_{\Om}\div\left(H(\nabla u)\nabla_{\xi}H(\nabla u))\right)\,dx =N|\Om|.
\end{equation*}
The result now follows by substituting this into \eqref{eq:step 2 in HK identity}.

Regarding the last statement, we note that after a standard approximation argument, one may obtain \eqref{eq:AHK} for $\Ga$ of class $C^{1,1}$.
\end{proof}

\section{Proof of Theorem \ref{thm:W12 stability}}\label{Sec: W12 stability for ASBT}

\begin{lem}\label{lem:trace and Poincare type inequality for phi}
Let $H\in C^{2,\alpha}\left(\RR^N\setminus\{0\}\right)$ be a Finsler norm satisfying \eqref{Assump:ellipticity assumption}.
    Let $\Om\subset\RR^N$ be a bounded domain with boundary $\Ga$ of class $C^{1,1}$. Let $u\in W^{1,2}_{0}(\Om)$ be a solution of \eqref{Anisotropic torsion}, $\nabla_H u$ be as in \eqref{anisotropic gradient} and $W$ be as in \eqref{eq:grad of anisotropic grad}. 
    Then, there exists $z\in\RR^N$, such that 
    \begin{equation}\label{eq:estimate for phi}
        \int_{\Ga}\left|\nabla_H u - (x-z)\right|^2\,d\mathcal{H}^{N-1} + \int_{\Om} \left|\nabla_H u - (x-z)\right|^2\,dx\leq C_1 \int_{\Om} \left[\tr(W^2)-\frac{\tr(W)^2}{N}\right]\,dx,
    \end{equation}
    and
    \begin{equation}\label{eq:estimate for inverted phi}
        \int_{\Ga}\left|\nabla u - \nabla V_{0}(x-z)\right|^2\,d\mathcal{H}^{N-1} + \int_{\Om} \left|\nabla u - \nabla V_{0}(x-z)\right|^2\,dx\leq C_1 \int_{\Om} \left[\tr(W^2)-\frac{\tr(W)^2}{N}\right]\,dx,
    \end{equation}
    where $C_1$ is a positive constant dependent only on $N$, the $C^{1,1}$ regularity of $\Ga$, the diameter of $\Om$ and the parameters $\lambda$ and $\Lambda$ appearing in \eqref{Assump:ellipticity assumption}.
\end{lem}
\begin{proof}
    Define 
    \begin{equation}\label{eq:choice of z}
        z \coloneqq \frac{1}{|\Om|}\int_{\Om} \Big(x-\nabla_{H}u\Big)\,dx.
    \end{equation}
    With this definition of $z$, each component of $\nabla_{H}u-(x-z)$ has zero average over $\Om$, so we may apply the Poincaré inequality to each component of $\nabla_{H}u-x$, and use the identity $\nabla\left(\nabla_{H}u-(x-z)\right)= W-I_N$ (which follows from \eqref{eq:grad of anisotropic grad}), to obtain
    \begin{equation}\label{eq:step 1 in ineqs for phi}
        \int_{\Om} \left|\nabla_{H}u -(x-z)\right|^{2}\,dx \leq N C_{P}\int_{\Om} \left|W-I_{N}\right|^2\,dx,
    \end{equation}
    where $C_{P}$ is the best constant in the Poincaré inequality, which can be estimated in terms of $N$, the $C^{0,1}$ regularity of $\Ga$, and the diameter of $\Om$ (see \cite[Remark $3.3$]{MP} for an explicit bound on the constant $C_{P}$).
    
    Following the arguments of the proof of~\cite[Proposition $3.7$]{DMMN}, 
    we see that there exists a constant $C_{N}$ dependent only on $N$, such that 
    \begin{equation}\label{eq:equivalency of hessian deficits}
    \left|W-I_{N}\right|^{2}\leq C_{N}\frac{\Lambda}{\lambda}\left[\tr(W^2)-\frac{\tr(W)^2}{N}\right],
    \end{equation}
    where $W$ is as in \eqref{eq:grad of anisotropic grad}.\\
    Using this, we follow up on \eqref{eq:step 1 in ineqs for phi} to get
    \begin{equation}\label{eq:step 2 in ineqs for phi}
        \int_{\Om} \left|\nabla_{H}u -(x-z)\right|^{2}\,dx \leq NC_{P}C_{N}\frac{\Lambda}{\lambda}\int_{\Om}\left[\tr(W^2)-\frac{\tr(W)^2}{N}\right]\,dx,
    \end{equation}
where $\Lambda$ and $\lambda$ are the parameters that appear in \eqref{Assump:ellipticity assumption}. 

 Employing the notation 
 \begin{equation}\label{eq:phi}
 \phi(x)=\nabla_{H}u(x) -(x-z),
 \end{equation}
 the divergence theorem yields
    \begin{equation}\label{eq:step 3 in ineqs for phi}
    \begin{split}
        \int_{\Ga} \left|\phi(x)\right|^2\,\Big\langle \nabla_{H}u\,,\,\nu\Big\rangle\,d\mathcal{H}^{N-1}(x)&= \int_{\Om} \Delta_{H}u\left|\phi(x)\right|^2\,dx +\int_{\Om} \Big\langle \nabla_{H}u\,,\, \left(W^T-I_{N}\right)\phi(x)\Big\rangle\,dx\\
        &\leq \left(N + \left\|\nabla_{H} u \right\|_{L^{\infty}\left(\Om\right)}\right)\left(\int_{\Om} \left|\phi(x)\right|^2 + \left|\phi(x)\right| \left|W-I_{N}\right|\,dx\right),
    \end{split}
\end{equation}
where, in the second line, we used the fact that $\Delta_{H} u = N$ for a.e. $x\in \Om$ and the Cauchy–Schwarz inequality.

Applying Hölder's inequality, \eqref{eq:step 2 in ineqs for phi} and  \eqref{eq:equivalency of hessian deficits} we follow up on \eqref{eq:step 3 in ineqs for phi} to obtain
\begin{equation}\label{eq:step 4 in ineqs for phi}
    \int_{\Ga} \left|\phi(x)\right|^2\,\Big\langle \nabla_{H}u\,,\,\nu\Big\rangle\,d\mathcal{H}^{N-1}(x) \leq C_{2} \int_{\Om} \left[\tr(W^2)-\frac{\tr(W)^2}{N}\right]\,dx,
\end{equation}
with 
$$
C_{2} = \left(N+\left\|\nabla_{H}u\right\|_{L^{\infty}(\Om)}\right)\left(N^{1/2}C_{P}^{1/2}+NC_{P}\right)C_{N}\frac{\Lambda}{\lambda}.
$$
Using \eqref{eq:norm equivalency}, \eqref{eq:C 1 alpha reg for torsions} we may estimate $\|\nabla_{H}u\|_{L^{\infty}(\Ga)}$ by a constant dependent on $N$, on the parameters $\lambda$ and $\Lambda$ appearing in \eqref{Assump:ellipticity assumption}, the diameter of $\Om$ and the $C^{1,1}$ regularity of $\Ga$.\\
Since, for any $x\in\Ga$, $\nu(x) = \frac{\nabla u(x)}{\left|\nabla u(x)\right|}$, we have that 
$$
\Big\langle \nabla_{H}u(x)\,,\,\nu(x)\Big\rangle = \frac{H(\nabla u(x))^2}{|\nabla u(x)|}\geq \lambda |\nabla u(x)|\geq \frac{\lambda}{\sqrt{\Lambda}} r_{\mathcal{W}}, \quad \text{for any}\,\, x\in\Ga,
$$
where we used \eqref{eq:inner product grad of norm with point}, \eqref{eq:norm equivalency} and Lemma~\ref{lem:Hopf lemma for anisotropic torsion}. 

With this in hand, \eqref{eq:step 4 in ineqs for phi} yields 
\begin{equation}\label{eq:step 5 in ineqs for phi}
        \int_{\Ga} \left|\phi(x)\right|^2\,d\mathcal{H}^{N-1}(x)\leq \frac{\sqrt{\Lambda}}{\lambda}r_{\mathcal{W}}^{-1}  C_{2}\int_{\Om} \left[\tr(W^2)-\frac{\tr(W)^2}{N}\right]\,dx.
\end{equation}
Combining \eqref{eq:step 2 in ineqs for phi} and \eqref{eq:step 5 in ineqs for phi}, we obtain \eqref{eq:estimate for phi} with 
$$
C_{1} = \frac{\sqrt{\Lambda}}{\lambda}r_{\mathcal{W}}^{-1}  C_{2}+ NC_{P}C_{N}\frac{\Lambda}{\lambda}.
$$
As discussed in Section \ref{sec:Preliminaries}, one can estimate $r_{\mathcal{W}}$ using the parameters $\lambda$ and $\Lambda$ appearing in \eqref{Assump:ellipticity assumption} and the $C^{1,1}$ regularity of $\Ga$. This implies that the constant $C_1$ can be estimated in terms of the parameters mentioned in the statement.\\
Finally, up to relabelling $C_{1}$, \eqref{eq:estimate for inverted phi} is a consequence of \eqref{eq:estimate for phi} and the inequality
$$
\left|\nabla u-\nabla V_{0}(x-z)\right|\leq \|\nabla^2 V_{0}\|_{L^{\infty}(\RR^N)}\left|\nabla_{H} u-(x-z)\right|\leq N^{1/2}\lambda^{-1/2}\left|\nabla_{H} u-(x-z)\right|, 
$$
which follows from \eqref{eq:inversion formulas}.
\end{proof}

\begin{rem}[Bounds on $\cM_{0}^{H}$]\label{rem:on the reference mean curvature}
We note that the reference constant $\cM_{0}^{H}$ defined in \eqref{eq:reference curvature} has the following bounds
    $$
    \frac{1}{\sqrt{\Lambda}K_1}\leq \cM_{0}^{H}\leq \frac{\sqrt{\Lambda}}{\sqrt{\lambda}}r_{\mathcal{W}}.
    $$
    In fact, using the divergence theorem, we have
    \begin{equation*}
        N\left|\Om\right| = \int_{\Om} \div\left(\nabla_{H}u\right)\,dx = \int_{\Ga}\Big\langle \nabla_{H}u\,,\,\nu\Big\rangle d\mathcal{H}^{N-1} =  \int_{\Ga}H(\nabla u) H(\nu) d\mathcal{H}^{N-1},
    \end{equation*}
    where we used \eqref{anisotropic gradient}, the fact that $\nu = \frac{\nabla u}{|\nabla u|}$ on $\Ga$, \eqref{eq:derivative of norm} $($with $t= |\nabla u|^{-1})$ and \eqref{eq:inner product grad of norm with point}.\\
    Recalling the definition of $P_{H}(\Om)$ in \eqref{eq: def_anisotropic perimeter}, and using \eqref{eq:norm equivalency}, \eqref{eq:C 1 alpha reg for torsions}, and \eqref{eq:explicit Hopf} we have
    $$
    \frac{\sqrt{\lambda}}{\sqrt{\Lambda}} r_{\mathcal{W}} P_{H}(\Om)\leq N|\Om|\leq \sqrt{\Lambda}K_{1}P_{H}(\Om),
    $$
    where $K_1$ is the constant appearing in \eqref{eq:C 1 alpha reg for torsions} and $r_{\mathcal{W}}$ is the the radius of the uniform interior Wulff sphere condition of $\Om$. The desired bounds on $\cM_{0}^{H}$ easily follow.
\end{rem}

\begin{lem}\label{lem: stability for phi}
Let $H\in C^{2,\alpha}\left(\RR^N\setminus\{0\}\right)$ be a Finsler norm satisfying \eqref{Assump:ellipticity assumption}.
    Let $\Om\subset\RR^N$ be a bounded domain with boundary $\Ga$ of class $C^{1,1}$. Let $u\in W_{0}^{1,2}(\Om)$ be a weak solution of \eqref{Anisotropic torsion}, $\nabla_{H} u$ be as in \eqref{anisotropic gradient} and $\mathcal{M}_{0}^{H}$ be as in \eqref{eq:reference curvature}. Then, there exists $z\in\RR^N$, such that 
    \begin{equation*}
       \left\|H(\nabla u)-\frac{1}{\cM_{0}^{H}}\right\|_{L^{2}(\Ga)} + \left\|\nabla_{H}u-(x-z)\right\|_{L^{2}\left(\Ga\right)}+\left\|\nabla u-\nabla V_{0}(x-z)\right\|_{L^{2}\left(\Ga\right)}\leq C_{3} \left\|\mathcal{M}^{H}-\cM_{0}^{H}\right\|_{L^{2}\left(\Ga\right)},
    \end{equation*}
    where $C_{3}$ is a positive constant dependent only on $N$, the parameters $\Lambda$, $\lambda$ appearing in \eqref{Assump:ellipticity assumption}, the diameter of $\Om$, and the $C^{1,1}$ regularity of $\Ga$.
\end{lem}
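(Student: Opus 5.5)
We first prove the estimate for $\Ga$ of class $C^{2,\gamma}$, starting from the identity of Theorem \ref{thm:fundamental identity for ASBT}. We then pass to $C^{1,1}$ boundaries by approximation, since every constant depends only on the $C^{1,1}$ regularity. We choose $z$ as in \eqref{eq:choice of z}. Set
\[
D:=\frac{1}{N-1}\int_{\Om}\Big[\tr(W^2)-\frac{\tr(W)^2}{N}\Big]dx,\qquad B:=\cM_0^H\int_\Ga H(\nu)\Big|H(\nabla u)-\frac{1}{\cM_0^H}\Big|^2 d\mathcal{H}^{N-1},
\]
and $\de:=\|\cM^H-\cM_0^H\|_{L^2(\Ga)}$. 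Both $D$ and $B$ are nonnegative: the integrand of $D$ is nonnegative by \eqref{eq:equivalency of hessian deficits}-type arguments (or directly as in \cite{DMMN}), and $B$ is an integral of a square against $H(\nu)>0$.

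We estimate the right-hand side of \eqref{eq:unpacked identity} by Cauchy--Schwarz. For the first term, $|\langle \nabla_H u-(x-z),\nu\rangle|\le|\phi|$, and $H(\nabla u)\le \sqrt\Lambda K_1$ by \eqref{eq:norm equivalency} and \eqref{eq:C 1 alpha reg for torsions}. Hence this term is at most $\sqrt\Lambda K_1\,\de\,\|\phi\|_{L^2(\Ga)}$, which by Lemma \ref{lem:trace and Poincare type inequality for phi} is at most $C\de\, D^{1/2}$. For the second term, $|x-z|$ must be bounded on $\Ga$. This follows because $|x-z|\le|\nabla_H u|+\frac{1}{|\Om|}\int_\Om|x'-\nabla_H u|\,dx'$ for $x\in\Ga$, which is bounded by $\diam\Om$ and $\|\nabla_H u\|_\infty$. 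Then, using $H(\nu)\ge\sqrt\lambda$ and the bounds on $\cM_0^H$ in Remark \ref{rem:on the reference mean curvature}, the second term is at most $C\de\,\|H(\nabla u)-1/\cM_0^H\|_{L^2(\Ga)}\le C\de\, B^{1/2}$. This yields
\[
D+B\le C\de\,(D^{1/2}+B^{1/2})\le C\de\,(D+B)^{1/2},
\]
so $D+B\le C\de^2$.

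From $B\le C\de^2$ and $H(\nu)\ge\sqrt\lambda$, together with the lower bound on $\cM_0^H$, we obtain the first summand $\|H(\nabla u)-1/\cM_0^H\|_{L^2(\Ga)}\le C\de$. From $D\le C\de^2$, the estimates \eqref{eq:estimate for phi} and \eqref{eq:estimate for inverted phi} give the second and third summands, with the same $z$. Note that the $z$ in Lemma \ref{lem:trace and Poincare type inequality for phi} is the explicit one in \eqref{eq:choice of z}, so a single $z$ works for all three terms.

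The main point needing care is the approximation step from $C^{2,\gamma}$ to $C^{1,1}$. We approximate $\Om$ by $C^{2,\gamma}$ domains $\Om_k$ with uniformly controlled $C^{1,1}$ bounds and diameters. On each $\Om_k$ we have $\cM^H_k\to\cM^H$ in $L^2$ after pulling back to $\Ga$, and the torsion functions converge in $C^{1}$ up to the boundary by the uniform $C^{1,\beta}$ estimate \eqref{eq:C 1 alpha reg for torsions}. The centres $z_k$ and the boundary traces also converge, so passing to the limit preserves the inequality. A secondary point is that the bound on $|x-z|$ uses only $\|\nabla_H u\|_\infty$ and $\diam\Om$, so it is uniform along the approximating sequence.
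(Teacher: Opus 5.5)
Your proposal is correct and follows essentially the same route as the paper: the $C^{2,\gamma}$ case via \eqref{eq:unpacked identity}, Cauchy--Schwarz on its right-hand side, Lemma~\ref{lem:trace and Poincare type inequality for phi} with the explicit $z$ of \eqref{eq:choice of z}, absorption of the resulting quadratic inequality, and then approximation (the paper handles the two left-hand terms separately rather than as $D+B$, which is purely cosmetic). One small slip: your displayed bound for $|x-z|$ is false as written (it fails, e.g.\ in the isotropic case for the ellipse $x_1^2/4+x_2^2<1$ at $x=(2,0)$), but the bound you need follows from $x-z=\frac{1}{|\Om|}\int_\Om\big(x-x'+\nabla_H u(x')\big)\,dx'$, which gives $|x-z|\le\diam\Om+\|\nabla_H u\|_{L^\infty(\Om)}$ on $\Ga$.
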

\begin{proof}
We start by proving the result under the additional assumption that $\Ga$ is of class $C^{2,\gamma}$ for some $\gamma\in (0,1)$.

Letting $\phi$ be as in \eqref{eq:phi}, and discarding the second term on the left-hand side of \eqref{eq:unpacked identity}, we see, using Hölder's inequality, that
\begin{equation}\label{eq:step 2.1 in optimal W12 stability}
   I\coloneqq \int_{\Om}\left[\tr(W^2)-\frac{\tr(W)^2}{N}\right]\,dx\leq C_{4}\left\|\mathcal{M}^{H}-\cM_{0}^{H}\right\|_{L^2(\Ga)}\left(\left\|\phi\right\|_{L^2(\Ga)} + \left\|H(\nabla u)- \frac{1}{\cM_{0}^{H}}\right\|_{L^2(\Ga)}\right),
\end{equation}
where 
$$
C_{4} \coloneqq (N-1)\left(\|H(\nabla u)\|_{L^{\infty}(\Ga)}+ \|x-z\|_{L^{\infty}(\Ga)}\right).
$$
On the other hand, discarding the first term on the left-hand side of \eqref{eq:unpacked identity} we obtain, via Hölder's inequality, that
\begin{equation}\label{eq:step 2.2 in optimal W12 stability}
    II\coloneqq \int_{\Ga} \left|H(\nabla u)-\frac{1}{\mathcal{M}_{0}^{H}}\right|^2 \,d\mathcal{H}^{N-1}(x)\leq C_{5} \left\|\mathcal{M}^{H}-\cM_{0}^{H}\right\|_{L^2(\Ga)}\left(\left\|\phi\right\|_{L^2(\Ga)} + \left\|H(\nabla u)- \frac{1}{\cM_{0}^{H}}\right\|_{L^2(\Ga)}\right),
\end{equation}
where 
$$
C_{5} \coloneqq \frac{1}{\cM^{H}_{0}}\left(\|H(\nabla u)\|_{L^{\infty}(\Ga)} +\|x-z\|_{L^{\infty}(\Ga)}\right).
$$

Combining \eqref{eq:step 2.1 in optimal W12 stability} and \eqref{eq:step 2.2 in optimal W12 stability} and using Lemma~\ref{lem:trace and Poincare type inequality for phi}, we see that 
\begin{equation}\label{eq:step 3 in optimal W12 stability}
    I + II \leq C_{6}\left\|\cM^{H}-\cM_{0}^{H}\right\|_{L^{2}(\Ga)}\left(I^{1/2}+II^{1/2}\right),
\end{equation}
with 
$$
C_{6} \coloneqq \max\left\{C_{4}\,,\, C_{5} \right\} \,\max\left\{C_{1}^{1/2}\,,\, 1\right\},
$$
where $C_{1}$ is the constant appearing in Lemma~\ref{lem:trace and Poincare type inequality for phi}. Using \eqref{eq:norm equivalency}, \eqref{eq:C 1 alpha reg for torsions}, \eqref{eq:choice of z}, and Remark~\ref{rem:on the reference mean curvature}, the constant $C_{6}$ can be bounded by a constant dependent only on $N$, the parameters $\lambda$ and $\Lambda$ appearing in \eqref{Assump:ellipticity assumption}, the diameter of $\Om$ and the $C^{1,1}$ regularity of $\Ga$.

Therefore, we have 
\begin{equation*}
    \max\left\{I\,,\, II\right\}\leq 2C_{6} \left\|\cM^{H}-\cM_{0}^{H}\right\|_{L^{2}(\Ga)}\max\left\{I\,,\, II\right\}^{1/2},
\end{equation*}
which implies that 
\begin{equation}\label{eq:step 4 in optimal W12 stability}
    \max\left\{I\,,\, II\right\}\leq 4 C_{6}^2 \left\|\cM^{H}-\cM_{0}^{H}\right\|_{L^{2}(\Ga)}^2.
\end{equation}
Combining this with Lemma~\ref{lem:trace and Poincare type inequality for phi}, the result follows with $C_{3} =  2C_{6}\left(2C_{1}^{1/2}+1\right)$. This concludes the proof in the case $\Ga$ is of class $C^{2,\gamma}$ for some $\gamma\in (0,1)$.

Noting that the constant $C_{3}$ above depends only on $N$, the parameters $\Lambda$, $\lambda$ appearing in \eqref{Assump:ellipticity assumption}, the diameter of $\Om$, and the $C^{1,1}$ regularity of $\Om$. 
The result now follows in the general case by a standard approximation argument.
\end{proof}

Now we are ready for the proof of Theorem~\ref{thm:W12 stability} 
\begin{proof}[Proof of Theorem \ref{thm:W12 stability}.]
Let $z\in\RR^N$ be as in Lemma~\ref{lem: stability for phi}. \newline
Due to \eqref{eq: def_anisotropic normal} and \eqref{eq:property 2 grad of norms}, we have that $H_{0}(\nu_{H}) = 1$. So, by the triangle inequality and \eqref{eq:norm equivalency}, we have, for any $x\in \Ga$, that 
\begin{equation}\label{eq:control on radius}
    \left|\frac{1}{\cM_{0}^{H}}\nu_{H}(x)- (x-z)\right|\geq \frac{1}{\sqrt{\lambda}} H_{0}\left(\frac{1}{\cM_{0}^{H}}\nu_{H}(x)- (x-z)\right)\geq \frac{1}{\sqrt{\lambda}} \left|\frac{1}{\cM_{0}^{H}}-H_{0}(x-z)\right|.
\end{equation}
Hence, up to relabelling the constant $C$, proving \eqref{eq:optimal W12 stability} is equivalent to proving 
\begin{equation*}
    \left\|\nu - \frac{\nabla H_0(x-z)}{\left|\nabla H_0(x-z)\right|}\right\|_{L^{2}\left(\Ga\right)}+\left\|\frac{1}{\cM_{0}^{H}}\nu_H - (x-z)\right\|_{L^{2}\left(\Ga\right)}\leq C\left\|\cM^{H}-\cM_{0}^{H}\right\|_{L^{2}\left(\Ga\right)}.
\end{equation*}

Let $u\in W_{0}^{1,2}(\Om)$ be the solution of \eqref{Anisotropic torsion}.
Considering that for any $x\in\Ga$, $\nu(x)= \frac{\nabla u(x)}{|\nabla u(x)|}$, we have, due to \eqref{eq:derivative of norm}, that 
\begin{equation*}
    \begin{split}
    \left|\frac{1}{\cM_{0}^{H}}\nu_H(x) -(x-z)\right|&\leq \left|\frac{1}{\cM_{0}^{H}}\nu_{H}(x) - \nabla_{H}u(x)\right|+\left|\nabla_{H}u(x) - (x-z)\right|\\
    &=\left|\frac{1}{\cM_{0}^{H}}- H(\nabla u)\right|\left|\nu_{H}(x)\right|+\left|\nabla_{H}u(x) - (x-z)\right|\\
    &\leq\sqrt{\Lambda}\left|\frac{1}{\cM_{0}^{H}}- H(\nabla u)\right|+\left|\nabla_{H}u(x) - (x-z)\right|,
    \end{split}
\end{equation*}
where we used the fact that $|\nu_{H}|\leq \sqrt{\Lambda}H_{0}(\nu_H)=\sqrt{\Lambda}$, which is a consequence of~\eqref{eq:norm equivalency} and Lemma~\ref{lem:inversion lemma}.\newline
So, by the triangle inequality and Lemma \ref{lem: stability for phi}, we have that 
\begin{equation}\label{eq:stability for anisotropic normal} 
    \begin{split}
    \left\|\frac{1}{\cM_{0}^{H}}\nu_H(x) -(x-z)\right\|_{L^{2}(\Ga)}&\leq \sqrt{\Lambda}\left\|\frac{1}{\cM_{0}^{H}}- H(\nabla u)\right\|_{L^{2(\Ga)}}+\left\|\nabla_{H}u(x) - (x-z)\right\|_{L^{2}(\Ga)}\\
    &\leq \left(\sqrt{\Lambda}+1\right)C_{3}\left\|\cM_{0}^{H}-\cM^{H}\right\|_{L^{2}(\Ga)}. 
    \end{split}
\end{equation}

From Lemma~\ref{lem: stability for phi}, we know that   
\begin{equation}\label{eq:application of lem:stability for phi}
    \left\|\nabla u-\nabla V_{0}(x-z)\right\|_{L^{2}\left(\Ga\right)}\leq C_{3} \left\|\mathcal{M}^{H}-\cM_{0}^{H}\right\|_{L^{2}\left(\Ga\right)}.
\end{equation}
Using the triangle inequality and \eqref{eq:explicit Hopf}, we have, for any $x\in\Ga$ such that  $x\neq z$,
that
\begin{equation}\label{eq:pointwise control on normal}
    \begin{split}
        \left|\nu-\frac{\nabla V_{0}(x-z)}{|\nabla V_{0} (x-z)|}\right|&\leq \left|\frac{\nabla u}{|\nabla u|}-\frac{\nabla V_{0}(x-z)}{|\nabla u|}\right| +\left|\frac{\nabla V_{0}(x-z)}{|\nabla u|}-\frac{\nabla V_{0}(x-z)}{|\nabla V_{0} (x-z)|}\right|\\
        &\leq \frac{\sqrt{\Lambda}}{r_{\mathcal{W}}}\left|\nabla u-\nabla V_{0}(x-z)\right|+\frac{\left|\nabla V_{0}(x-z)\right|}{\left|\nabla u\right|\left|\nabla V_{0}(x-z)\right|}\left|\left|\nabla u\right|-\left|\nabla V_{0}(x-z)\right|\right|\\
        &\leq 2\frac{\sqrt{\Lambda}}{r_{\mathcal{W}}}\left|\nabla u-\nabla V_{0}(x-z)\right|.
    \end{split}
\end{equation}
As discussed in Remark \ref{rem:location of z}, one could note that if the deviation $\left\|\cM^{H}-\cM_{0}^{H}\right\|_{L^{2}(\Ga)}$ is small enough then we must have $z\in\Omega$ and hence $x\neq z$ for any $x \in \Gamma$. Here, we do not need this, and we are not imposing any smallness assumption.

Applying \eqref{eq:application of lem:stability for phi} we then see that 
\begin{equation}\label{eq:step 1 in control of normal}
        \left\|\nu-\frac{\nabla V_{0}(x-z)}{|\nabla V_{0} (x-z)|}\right\|_{L^{2}(\Ga)}\leq 2\frac{\sqrt{\Lambda}}{r_{\mathcal{W}}}\left\|\nabla u-\nabla V_{0}(x-z)\right\|_{L^{2}(\Ga)}\leq 2\frac{\sqrt{\Lambda}}{r_{\mathcal{W}}}C_{3}\left\|\cM^{H}-\cM_{0}^{H}\right\|_{L^{2}(\Ga)}.
\end{equation}

The result now follows by combining \eqref{eq:stability for anisotropic normal} and \eqref{eq:step 1 in control of normal}.
\end{proof}
\begin{rem}[On the location of $z$ in Theorem~\ref{thm:W12 stability}]\label{rem:location of z}
Even though our arguments do not need it, one could show that if $\left\|\cM^{H}-\cM_{0}^{H}\right\|_{L^{2}(\Ga)}$ is small enough, then $z\in \Om$. To see this, note that using \eqref{eq:stability for anisotropic normal} and leveraging the arguments in Lemma~\ref{lem:qualitative lemma AHK}, one concludes that $\left\|\frac{1}{\cM_{0}^{H}}\nu_H-(x-z)\right\|_{L^{\infty}(\Ga)}<\frac{\sqrt{\lambda}}{2\cM_{0}^{H}}$, provided that $\left\|\cM^{H}-\cM_{0}^{H}\right\|_{L^{2}(\Ga)}$ is small enough. Using \eqref{eq:inner product grad of norm with point} and \eqref{eq:norm equivalency} we find, for any $x\in\Ga$, that $\Big\langle \nu_H (x)\,,\,\nu(x)\Big\rangle = H(\nu)> \sqrt{\lambda}$. On the other hand, since $\left\|\frac{1}{\cM_{0}^{H}}\nu_H-(x-z)\right\|_{L^{\infty}(\Ga)}<\frac{\sqrt{\lambda}}{2\cM_{0}^{H}}$, then it follows, for any $x\in\Ga$, that
$$
\Big\langle \nu(x) \,,\, x-z\Big\rangle = \Big\langle \nu(x) \,,\, x-z - \frac{1}{\cM_{0}^{H}}\nu_H(x)\Big\rangle + \Big\langle \nu(x) \,,\, \nu_H(x)\Big\rangle > -\frac{\sqrt{\lambda}}{2\cM_{0}^{H}}+\frac{\sqrt{\lambda}}{\cM_{0}^{H}}>0.
$$
This implies that $\Om$ is starshaped around $z$, and therefore $z\in\Om$.
\end{rem}


\section{Proof of Theorem~\ref{thm:stability of Heintze-Karcher}}\label{sec:stability for AHK}

\begin{lem}\label{lem: stability for gradient in AHK}
Let $H\in C^{2,\alpha}\left(\RR^N\setminus\{0\}\right)$ be a Finsler norm satisfying \eqref{Assump:ellipticity assumption}.
    Let $\Om\subset\RR^N$ be a bounded domain with boundary $\Ga$ of class $C^{1,1}$ such that $\cM^{H}>0$. Let $u\in W_{0}^{1,2}(\Om)$ be a weak solution of \eqref{Anisotropic torsion}. Then
    \begin{equation*}
    \left\|\nabla u-\nabla V_{0}(x-z)\right\|_{L^{2}\left(\Omega\right)}+\left\|\nabla u-\nabla V_{0}(x-z)\right\|_{L^{2}\left(\Ga\right)}\leq C_{7} \left(\int_{\Ga}\frac{H(\nu)}{\cM^{H}}d\mathcal{H}^{N-1}- N\left|\Om\right|\right)^{1/2},
    \end{equation*}
    where $z\in \RR^N$ is as in Lemma~\ref{lem:trace and Poincare type inequality for phi} and where $C_{7}$ is a positive constant dependent only on $N$, the parameters $\Lambda$, $\lambda$ appearing in \eqref{Assump:ellipticity assumption}, the diameter of $\Om$, and the $C^{1,1}$ regularity of $\Ga$.
\end{lem}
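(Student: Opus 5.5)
This is essentially immediate from Theorem~\ref{thm:fundamental identity for Heintze-karcher} combined with estimate \eqref{eq:estimate for inverted phi} of Lemma~\ref{lem:trace and Poincare type inequality for phi}. I first prove it under the extra assumption that $\Ga$ is of class $C^{2,\gamma}$, and then remove that assumption by approximation, as in Lemma~\ref{lem: stability for phi}.

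\emph{Step 1 ($C^{2,\gamma}$ case).} Since $\cM^H>0$ and $H(\nu)>0$, the boundary integral
\[
\int_{\Ga}\left|1-\cM^{H}H(\nabla u)\right|^2\frac{H(\nu)}{\cM^H}\,d\mathcal{H}^{N-1}
\]
in the identity of Theorem~\ref{thm:fundamental identity for Heintze-karcher} is nonnegative. Discarding it gives
\[
\int_{\Om}\left[\tr(W^2)-\frac{\tr(W)^2}{N}\right]dx\le (N-1)\left(\int_{\Ga}\frac{H(\nu)}{\cM^{H}}\,d\mathcal{H}^{N-1}-N|\Om|\right).
\]

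\emph{Step 2.} Take $z$ as in \eqref{eq:choice of z}; this is the point used in Lemma~\ref{lem:trace and Poincare type inequality for phi}. Plugging the bound of Step 1 into \eqref{eq:estimate for inverted phi} gives
\[
\left\|\nabla u-\nabla V_0(x-z)\right\|^2_{L^2(\Om)}+\left\|\nabla u-\nabla V_0(x-z)\right\|^2_{L^2(\Ga)}\le (N-1)C_1\left(\int_{\Ga}\frac{H(\nu)}{\cM^{H}}\,d\mathcal{H}^{N-1}-N|\Om|\right).
\]
Using $a+b\le \sqrt{2}\,(a^2+b^2)^{1/2}$, the claim follows with $C_7=\sqrt{2(N-1)C_1}$. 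The constant $C_1$ depends only on $N$, $\lambda$, $\Lambda$, the diameter of $\Om$ and the $C^{1,1}$ regularity of $\Ga$, so $C_7$ has the stated dependence.

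\emph{Step 3 (general $C^{1,1}$ case).} The main point needing care is the approximation. The identity of Theorem~\ref{thm:fundamental identity for Heintze-karcher} requires $C^{2,\gamma}$ boundaries, whereas Lemma~\ref{lem:trace and Poincare type inequality for phi} already holds for $C^{1,1}$ boundaries.

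I would approximate $\Om$ by $C^{2,\gamma}$ domains $\Om_k$ with three properties:
\begin{itemize}
\item the $C^{1,1}$ bounds and diameters of the $\Om_k$ are uniformly controlled;
\item $\cM^H_{\Om_k}\to\cM^H$, for instance in $L^1$ after pulling back to $\Ga$, with a uniform positive lower bound on compact pieces so that $\cM^H_{\Om_k}>0$ for large $k$;
\item the torsion functions converge, namely $u_k\to u$ in $C^1$ by the uniform $C^{1,\beta}$ estimate \eqref{eq:C 1 alpha reg for torsions}, and the points $z_k\to z$ by \eqref{eq:choice of z}.
\end{itemize}
With these in hand, the inequality passes to the limit. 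On the left, the $L^2(\Om_k)$ and $L^2(\Ga_k)$ norms converge by $C^1$ convergence. On the right, the Heintze--Karcher deficit converges, or at least its $\limsup$ is controlled by the deficit of $\Om$. The last property is exactly the one invoked implicitly for \eqref{eq:AHK} in the $C^{1,1}$ case. I would follow the same standard approximation argument the paper uses there and in Lemma~\ref{lem: stability for phi}.
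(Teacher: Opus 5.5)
Your proposal is correct and follows the paper's proof essentially verbatim. In the $C^{2,\gamma}$ case you discard the nonnegative boundary term in the identity of Theorem~\ref{thm:fundamental identity for Heintze-karcher} and feed the resulting bound on $\int_{\Om}[\tr(W^2)-\tr(W)^2/N]\,dx$ into \eqref{eq:estimate for inverted phi}; you then pass to $C^{1,1}$ boundaries by approximation, using that $C_1$ does not depend on the $C^{2,\gamma}$ norm. The paper handles the approximation step at the same level as you: it just invokes a standard approximation argument and leaves the delicate points you flag (keeping $\cM^H_{\Om_k}>0$ and controlling the limit of the deficit) unaddressed.
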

\begin{proof}
    For now, assume that $\Ga$ is of class $C^{2,\gamma}$ for some $\gamma\in (0,1)$.
Due to this extra assumption, we may combine Lemma~\ref{lem:trace and Poincare type inequality for phi} and Theorem~\ref{thm:fundamental identity for Heintze-karcher} to find that 
    \begin{equation}
        \int_{\Ga}\left|\nabla u - \nabla V_{0}(x-z)\right|^2\,d\mathcal{H}^{N-1} + \int_{\Om} \left|\nabla u - \nabla V_{0}(x-z)\right|^2\,dx\leq (N-1)C_1\left(\int_{\Ga}\frac{H(\nu)}{\cM^{H}}\,d\mathcal{H}^{N-1}-N\left|\Om\right|\right),
    \end{equation}
    where $C_{1}$ is the constant appearing in Lemma~\ref{lem:trace and Poincare type inequality for phi}, which depends only on $N$, the parameters $\lambda$ and $\Lambda$ that appear in \eqref{Assump:ellipticity assumption}, the diameter of $\Om$ and the $C^{1,1}$ regularity of $\Ga$. 

    Since the constant $C_{1}$ does not depend on the $C^{2,\gamma}$ regularity of $\Ga$, the result for $\Ga$ of class $C^{1,1}$ will follow with a standard approximation argument.
\end{proof}

 \begin{lem}\label{lem:qualitative lemma AHK}
    Let $H\in C^{2,\alpha}\left(\RR^N\setminus\{0\}\right)$ be a Finsler norm satisfying \eqref{Assump:ellipticity assumption}.
    Let $\Om\subset\RR^N$ be a bounded domain with boundary $\Ga$ of class $C^{1,1}$ such that $\cM^{H}>0$. Let $u\in W_{0}^{1,2}(\Om)$ be a weak solution of \eqref{Anisotropic torsion}. Let $z\in \RR^N$ be as in Lemma~\ref{lem:trace and Poincare type inequality for phi}.
    For every $\delta > 0$, there exists $\overline{\delta}>0$ dependent only on $\delta$, $N$, the parameters $\lambda$ and $\Lambda$ appearing in \eqref{Assump:ellipticity assumption}, the diameter of $\Om$ and the $C^{1,1}$ regularity of $\Ga$, such that, if
    \begin{equation*}
    \int_{\Ga}\frac{H(\nu)}{\cM^{H}}\,d\mathcal{H}^{N-1}-N\left|\Om\right|\leq \overline{\delta},
\end{equation*}
then
\begin{equation*}
    \max_{x\in \Ga} \left|\nabla u - \nabla V_{0}(x-z)\right|\leq \delta.
\end{equation*}
\end{lem}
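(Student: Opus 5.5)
The plan is to upgrade the $L^2$ control of Lemma~\ref{lem: stability for gradient in AHK} to uniform control by interpolating against a uniform H\"older bound. Set $g(x):=\nabla u(x)-\nabla V_{0}(x-z)$ on $\overline{\Om}$. By \eqref{eq:C 1 alpha reg for torsions}, $\nabla u\in C^{0,\beta}(\overline{\Om})$ with norm at most $K_1$. By \eqref{eq:inversion formulas}, $\nabla V_0$ is globally Lipschitz with constant $N^{1/2}\lambda^{-1/2}$. Moreover, $z$ from \eqref{eq:choice of z} satisfies $|x-z|\le \diam\Om+\|\nabla_H u\|_{L^\infty}$ for $x\in\overline\Om$, so $\nabla V_0(x-z)$ is bounded and $\beta$-H\"older on $\overline{\Om}$ with constants depending only on the admissible data. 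Hence $[g]_{C^{0,\beta}(\overline{\Om})}\le K$, where $K$ depends only on $N,\lambda,\Lambda$, $\diam\Om$ and the $C^{1,1}$ regularity of $\Ga$.

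Next, let $x_0\in\Ga$ be a point where $|g|$ attains its maximum $m$ on $\Ga$. For $|x-x_0|\le \rho:=(m/(2K))^{1/\beta}$ we have $|g(x)|\ge m/2$. I will use the interior $L^2$ bound over $\Om$, since this needs only a lower bound for $|B_\rho(x_0)\cap\Om|$. The uniform interior Wulff (or Euclidean) ball condition of radius $r_{\mathcal W}$ gives $|B_\rho(x_0)\cap\Om|\ge c\,\rho^N$ for $\rho\le r_{\mathcal W}$, with $c$ depending only on $N$ and the $C^{1,1}$ regularity. When $\rho> r_{\mathcal W}$ I replace $\rho$ by $r_{\mathcal W}$, and the resulting bound is only better. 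Lemma~\ref{lem: stability for gradient in AHK} then gives
\[
\frac{m^2}{4}\,c\,\min\{\rho,r_{\mathcal W}\}^N\le \|g\|_{L^2(\Om)}^2\le C_7^2\,\overline{\delta}.
\]
This yields $m\le C\,\overline{\delta}^{\beta/(N+2\beta)}$, or $m\le C\overline{\delta}^{1/2}$ in the capped case. Either way $m\le \delta$ once $\overline{\delta}$ is chosen small enough in terms of $\delta$ and the data. Alternatively, one can use the boundary $L^2$ norm together with a lower bound $\mathcal H^{N-1}(B_\rho(x_0)\cap\Ga)\ge c\rho^{N-1}$ coming from the local graph description in Definition~\ref{def:regularity of a domain}.

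The main obstacle is bookkeeping rather than any conceptual step. The H\"older constant of $g$ and the density constant $c$ must depend only on the listed quantities, and in particular not on $z$ beyond the bound on $|z|$ relative to $\Om$. Choosing $z$ as in \eqref{eq:choice of z} settles this, since $|x-z|$ is controlled by $\diam\Om$ and $\|\nabla_Hu\|_{L^\infty}\le\sqrt\Lambda K_1$. The remaining facts are standard: the Lipschitz bound on $\nabla V_0$, including near $0$ where $\nabla V_0$ is continuous and vanishes, and the local volume lower bound from the interior ball condition.
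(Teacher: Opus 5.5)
Your argument is correct and is essentially the paper's. Both proofs combine the uniform $C^{0,\beta}$ bound on $\nabla u-\nabla V_0(x-z)$ with the $L^2$ estimate of Lemma~\ref{lem: stability for gradient in AHK}, showing that a large value at a point of $\Ga$ forces a large $L^2$ norm on a ball of controlled radius around it. The paper works with the boundary norm $L^2(\Ga)$ and a lower bound on the surface measure of $\Ga\cap B_r(y_0)$, with $r$ fixed by the threshold $\delta$; this is exactly the alternative you mention. You instead use $L^2(\Om)$, the interior-ball volume bound, and a radius tied to the maximum $m$, which also gives the explicit rate $m\le C\,\overline{\delta}^{\beta/(N+2\beta)}$.

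One harmless slip: $|\nabla_H u|=H(\nabla u)\,|\nabla_\xi H(\nabla u)|\le \Lambda K_1$, not $\sqrt{\Lambda}K_1$. This does not affect the dependence of your constants.
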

\begin{proof}
 From Lemma~\ref{lem: stability for gradient in AHK} we have
    \begin{equation}\label{eq:step 1 in qualitative AHK}
        \left\|\nabla u-\nabla V_{0}(x-z)\right\|_{L^{2}\left(\Ga\right)}\leq C_{7} \left(\int_{\Ga}\frac{H(\nu)}{\cM^{H}}\,d\mathcal{H}^{N-1}-N\left|\Om\right|\right)^{1/2}.
    \end{equation}
    Given $\delta>0$, define the measurable set
$$
S_{\delta} \coloneqq \left\{x\in \Ga:\,\left|\nabla u - \nabla V_{0}(x-z)\right|^2>\delta \right\}.
$$
Assume that there exists $y_{0} \in S_{\delta}$. Then, given that $\nabla u - \nabla V_{0}(x-z)\in C^{0,\beta}(\overline{\Om})$, there exists $r>0$ dependent only on the $C^{0,\beta}$ regularity of $\nabla u - \nabla V_{0}(x-z)$, the $C^{1,1}$ regularity of $\Ga$ and on $\delta$, such that 
\begin{equation*}
    \left\{y\in\Ga:\, |y-y_0|<r\right\}\subset S_{\delta/2}, \quad \text{and} \quad \mathcal{H}^{N-1}\left(\left\{y\in\Ga:\, |y-y_0|<r\right\}\right)\geq \left|B_{1}^{N-1}\right| r^{N-1},
\end{equation*}
where $\left|B_{1}^{N-1}\right|$ denotes the $(N-1)-$dimensional Lebesgue measure of the unit ball in $\RR^{N-1}$.\newline 
The definition of $S_{\delta/2}$ and \eqref{eq:step 1 in qualitative AHK} now yield
\begin{equation*}
   \left|B_{1}^{N-1}\right| r^{N}\leq \mathcal{H}^{N-1}\left(S_{\delta/2}\right)\leq \frac{2}{\delta}\int_{\Ga}\left|\nabla u-\nabla V_{0}(x-z)\right|^2\,\mathcal{H}^{N-1}\leq \frac{2}{\delta}C_{7}^{2} \left(\int_{\Ga}\frac{H(\nu)}{\cM^{H}}\,d\mathcal{H}^{N-1}-N\left|\Om\right|\right).
\end{equation*}
So, setting $\overline{\delta}\coloneqq \frac{\delta}{2} \frac{\left|B_{1}^{N-1}\right|}{C_{7}^2}r^{N}$, we find that if
\begin{equation*}
    \int_{\Ga}\frac{H(\nu)}{\cM^{H}}\,d\mathcal{H}^{N-1}-N\left|\Om\right| \leq \overline{\delta},
\end{equation*}
then $S_{\delta}=\emptyset$.
\end{proof}

Now we are ready for the proof of Theorem~\ref{thm:stability of Heintze-Karcher}.
\begin{proof}[Proof of Theorem~\ref{thm:stability of Heintze-Karcher}]
Let $u\in W^{1,2}_{0}(\Om)$ be the weak solution of \eqref{Anisotropic torsion}. Let $z\in \RR^N$ be as in Lemma~\ref{lem:trace and Poincare type inequality for phi}. We break the proof into two steps. 

\textbf{Step 1:}
In this step, we estimate the first term on the left-hand side of \eqref{eq:W 1,2 stability for HK}, namely, we prove the inequality
\begin{equation}\label{eq:step 1 in W12 AHK}
    \int_{\Ga} \left|H_{0}(x-z)-R\right|^2\,d\mathcal{H}^{N-1}\leq C_{8} \left(\int_{\Ga}\frac{H(\nu)}{\cM^{H}}d\mathcal{H}^{N-1}-N\left|\Om\right|\right),
\end{equation}
for some $R>0$, and where $C_{8}$ is a positive constant dependent only on $N$, the parameters $\lambda$ and $\Lambda$ appearing in \eqref{Assump:ellipticity assumption}, the diameter of $\Om$ and the $C^{1,1}$ regularity of $\Ga$. 

Define $R = \sqrt{2\rho}$, where $\rho$ is given by
\begin{equation}\label{eq:choice of radius for AHK}
    \rho = \frac{1}{|\Om|}\int_{\Om}\left[\frac{1}{2}H_{0}^{2}(x-z) - u\right]\,dx.
\end{equation}
Since $\Om$ is of class $C^{1,1}$ it satisfies a uniform interior sphere condition with radius $r_{\Om}$. As a consequence, we see that 
$$
\rho \geq \frac{1}{2}\int_{\Om} H_{0}^2(x-z)\,dx \geq \frac{1}{2\Lambda} \int_{B_{r_{\Om}}(x_0)}\left|x-z\right|^2\,dx\geq \frac{\mathcal{H}^{N-1}\left(\SS^{N-1}\right)}{2\Lambda (N+2)} r_{\Om}^{N+2}.
$$
As a consequence of this, we see that 
\begin{equation}\label{eq:step 1 in L2 stab for AHK}
    \left|H_{0}(x-z) - R\right| = \frac{2}{H_0(x-z) + R}\left|\frac{1}{2}H_{0}^{2}(x-z)-\rho\right|\leq C_{9} \left|V_{0}(x-z)-\rho\right|, 
\end{equation}
where $C_{9}$ is a constant dependent only on $N$, the parameter $\Lambda$ that appears in \eqref{Assump:ellipticity assumption} and the $C^{1,1}$ regularity of $\Om$.

Using \eqref{eq:derivative of norm} and \eqref{eq:norm equivalency} and Lemma~\ref{lem:Hopf lemma for anisotropic torsion} we have that 
$$
\Big\langle \nabla_{H}u(x)\,,\,\nu(x)\Big\rangle = \frac{H(\nabla u(x))^2}{|\nabla u(x)|}\geq \lambda |\nabla u(x)|\geq \frac{\lambda}{\sqrt{\Lambda}} r_{\mathcal{W}}.
$$ 
It follows from this and \eqref{eq:step 1 in L2 stab for AHK} that 
\begin{equation*}
        \int_{\Ga} \left| H_{0}(x-z)-R\right|^{2}  \,d\mathcal{H}^{N-1}\leq C_{10}\int_{\Ga}\left|V_{0}(x-z)-u-\rho\right|^{2}\,\Big\langle \nabla_{H}u\,,\,\nu\Big\rangle\,d\mathcal{H}^{N-1},
\end{equation*}
where 
$$
C_{10} = C_{9}^2 \frac{\sqrt{\Lambda}}{\lambda}r_{\mathcal{W}}^{-1}.
$$
The divergence theorem implies that 
\begin{equation*}
    \begin{split}
        \int_{\Ga}\left| H_{0}(x-z)-R\right|^{2}  \,d\mathcal{H}^{N-1}&\leq C_{10}\int_{\Om} \Delta_{H} u \left|V_{0}^{2}(x-z)-u-\rho\right|^2\,dx\\
        &+ C_{10}\int_{\Om} \left(V_{0}(x-z)-u-\rho\right)\Big\langle \nabla_{H}u\,,\, \nabla V_{0}(x-z) -\nabla u\Big\rangle \,dx.
    \end{split}
\end{equation*}
Applying the Cauchy-Schwarz inequality and Hölder's inequality, we follow up on the previous equation to obtain 
\begin{equation}\label{eq:step 2 in L2 stab for AHK}
    \begin{split}
        \int_{\Ga}\left|H_{0}(x-z)-R\right|^{2}  d\mathcal{H}^{N-1}&\leq C_{10}N\int_{\Om}\left|V_{0}(x-z)-u-\rho\right|^2\,dx\\
        &+ C_{10} \left\|\nabla_{H} u \right\|_{L^{\infty}\left(\Om\right)}\int_{\Om}\left|V_{0}(x-z)-u-\rho\right|\left|\nabla V_{0}(x-z)-\nabla u \right|\,dx\\
        &\leq C_{10}N\int_{\Om}\left|V_{0}(x-z)-u-\rho\right|^2\,dx\\
        &+C_{10} \left\|\nabla_{H} u \right\|_{L^{\infty}\left(\Om\right)}\|\nabla V_0(x-z) -\nabla u\|_{L^{2}(\Om)}\left\|V_{0}(x-z)-u\right\|_{L^{2}(\Om)}.
    \end{split}
\end{equation}
Since, by \eqref{eq:choice of radius for AHK}, $V_{0}(x-z)-u-\rho$ has zero average in $\Om$, we apply the Poincaré inequality to obtain
\begin{equation*}
    \int_{\Om}\left|V_{0}(x-z)-u-\rho\right|^2\,dx\leq C_{P} \int_{\Om}\left|\nabla V_{0}(x-z)-\nabla u\right|^2\,dx,
\end{equation*}
where the constant $C_P$ is the best constant in the Poincare inequality and can be estimated in terms of $N$, the $C^{0,1}$ regularity of $\Ga$ and the diameter of $\Om$ (see \cite[Remark $3.3$]{MP} for an explicit bound on the constant $C_{P}$).\\
With this in hand, we follow up on \eqref{eq:step 2 in L2 stab for AHK} to obtain
\begin{equation*}
        \int_{\Ga}\left|H_{0}(x-z)-R\right|^{2}  d\mathcal{H}^{N-1}   \leq C_{10}\left(NC_{P} + \left\|\nabla_{H} u \right\|_{L^{\infty}\left(\Om\right)} C_{P}^{1/2}\right)\int_{\Om}\left|\nabla V_{0}(x-z)-\nabla u\right|^2\,dx.
\end{equation*}
Using Lemma~\ref{lem: stability for gradient in AHK} we obtain \eqref{eq:step 1 in W12 AHK} with
$$C_{8} =C_{10}\left(NC_{P} + \left\|\nabla_{H} u \right\|_{L^{\infty}\left(\Om\right)} C_{P}^{1/2}\right).$$
Note that the constant $C_{8}$ depends only on $N$, the parameters $\lambda$, $\Lambda$ appearing in \eqref{Assump:ellipticity assumption}, the diameter of $\Om$ and the $C^{1,1}$ regularity of $\Ga$. 
This is easily checked by tracking the dependencies of the constant $C_{10}$, using \eqref{eq:C 1 alpha reg for torsions} and recalling the dependencies of the constant $C_P$.

\textbf{Step 2:} In this step, we estimate the second term on the left-hand side of \eqref{eq:W 1,2 stability for HK}, that is we prove the inequality
\begin{equation}\label{eq:step 2 in W12 AHK}
    \int_{\Ga} \left|R\nu_{H}-(x-z)\right|^2\,d\mathcal{H}^{N-1}\leq C_{11} \left(\int_{\Ga}\frac{H(\nu)}{\cM^{H}}d\mathcal{H}^{N-1}-N\left|\Om\right|\right),
\end{equation}
for some positive constant $C_{11}$ dependent only on $N$, the parameters $\lambda$ and $\Lambda$ appearing in \eqref{Assump:ellipticity assumption}, the diameter of $\Om$ and the $C^{1,1}$ regularity of $\Ga$.

First, we note that by the triangle inequality we have the following
\begin{equation}\label{eq:ineq for anisotropic normal}
        \left|R\nu_{H}-(x-z)\right|\leq \left|\nu_{H}\right|\left|R-H_{0}(x-z)\right| + H_{0}(x-z)\left|\nu_{H}-\frac{(x-z)}{H_{0}(x-z)}\right|.
\end{equation}
Using Lemma~\ref{lem:inversion lemma} we find that 
\begin{equation}\label{eq:x-z in terms of V and V_0}
    \frac{x-z}{H_{0}(x-z)} = \frac{\nabla_{\xi}V(\nabla V_{0}(x-z))}{H_{0}(x-z)}= \nabla_{\xi}H(\nabla V_{0}(x-z)).
\end{equation}

Applying Lemma~\ref{lem:qualitative lemma AHK} with $\delta \coloneqq\frac{r_{\mathcal{W}}}{4\sqrt{\Lambda}}$, we find that there exists $\overline{\delta}>0$ dependent only, $N$, the parameters $\lambda$ and $\Lambda$ appearing in \eqref{Assump:ellipticity assumption}, the diameter of $\Om$ and the $C^{1,1}$ regularity of $\Ga$, such that if 
\begin{equation}\label{eq:suff condition for anisotropic normal control AHK}
    \int_{\Ga}\frac{H(\nu)}{\cM^{H}}d\mathcal{H}^{N-1}-N\left|\Om\right|\leq \overline{\delta}, 
\end{equation}
then we have that 
\begin{equation*}
    \left\|\nabla u - \nabla V_{0}(x-z)\right\|_{L^{\infty}(\Ga)}\leq \frac{r_{\mathcal{W}}}{4\sqrt{\Lambda}} .
\end{equation*}
As a consequence, Lemma~\ref{lem:Hopf lemma for anisotropic torsion} implies that 
\begin{equation*}
    \min_{x\in \Ga} \left\{\left|\nabla u(x)\right|\,,\, \left|\nabla V_{0}(x-z)\right|\right\}\geq \frac{3 r_{\mathcal{W}}}{4 \sqrt{\Lambda}}.
\end{equation*}
This implies that for any $x\in \Ga$, the straight line connecting $\nabla u(x)$ and $\nabla V_{0}(x-z)$ is contained in $\RR^N\setminus B_{r}(0)$, where $r = \frac{3r_{\mathcal{W}}}{4\sqrt{\Lambda}}$.\\
Using the relation $\nu_H = \nabla_{\xi}H(\nabla u)$ (which easily follows by \eqref{eq:derivative of norm} and the definition of $\nu_H$ in \eqref{eq: def_anisotropic normal}), \eqref{eq:x-z in terms of V and V_0}, and the fundamental theorem of calculus, we find
\begin{equation*}
    \left|\nu_{H}-\frac{x-z}{H_{0}(x-z)}\right|= \left|\nabla_{\xi}H(\nabla u)-\nabla_{\xi}H(\nabla V_{0}(x-z))\right|\leq \left\|\nabla_{\xi}^2 H\right\|_{L^{\infty}(\RR^N\setminus B_{r}(0))} \left|\nabla u - \nabla V_{0}(x-z)\right|.
\end{equation*}
Substituting this into \eqref{eq:ineq for anisotropic normal}, we find that 
\begin{equation*}
    \left|R\nu_{H}-(x-z)\right|\leq C_{12} \Big(\left|R-H_{0}(x-z)\right| + \left|\nabla u-\nabla V_{0}(x-z)\right|\Big),
\end{equation*}
where 
$$
C_{12} = \max\left\{\left|\nu_{H}\right|\,,\, \left\|H_{0}(x-z)\right\|_{L^{\infty}(\Om)}\left\|\nabla_{\xi}^2 H\right\|_{L^{\infty}(\RR^N\setminus B_{r}(0))}\right\}.
$$
So, using \eqref{eq:step 1 in W12 AHK} and Lemma~\ref{lem: stability for gradient in AHK} we find that 
\begin{equation*}
    \left\|R\nu_{H}-(x-z)\right\|_{L^{2}(\Ga)}\leq C_{12}\left(C_{8}^{1/2}+C_{7}^{1/2}\right)\left(\int_{\Ga}\frac{H(\nu)}{\cM^{H}}d\mathcal{H}^{N-1}-N\left|\Om\right|\right)^{1/2}.
\end{equation*}
This proves \eqref{eq:step 2 in W12 AHK} under the additional assumption \eqref{eq:suff condition for anisotropic normal control AHK}.

On the other hand, if \eqref{eq:suff condition for anisotropic normal control AHK} is not in force, then we have that 
\begin{equation*}
\int_{\Ga} \left|R\nu_{H}-(x-z)\right|d\mathcal{H}^{N-1}\leq \frac{\left\|R\nu_{H}-(x-z)\right\|_{L^{\infty}(\Ga)}\mathcal{H}^{N-1}\left(\Ga\right)}{\overline{\delta}}\left(\int_{\Ga}\frac{H(\nu)}{\cM^{H}}d\mathcal{H}^{N-1}-N\left|\Om\right|\right).
\end{equation*}

In any case, \eqref{eq:step 2 in W12 AHK} we see that holds with 
$$
C_{11} = \max\left\{C_{12}^{2}\left(C_{8}^{1/2}+C_{7}^{1/2}\right)^{2}\,,\,\frac{\left\|R\nu_{H}-(x-z)\right\|_{L^{\infty}(\Ga)}\mathcal{H}^{N-1}\left(\Ga\right)}{\overline{\delta}}\right\}.
$$

\textbf{Step 3:}In this step, we finalise the proof by estimating the third term on the left-hand side of \eqref{eq:W 1,2 stability for HK}.
Proceeding like in \eqref{eq:pointwise control on normal} we find that 
\begin{equation*}
    \left\|\nu-\frac{\nabla V_{0}(x-z)}{|\nabla V_{0} (x-z)|}\right\|_{L^{2}(\Ga)}\leq 2\frac{\sqrt{\Lambda}}{r_{\mathcal{W}}}\left\|\nabla u-\nabla V_{0}(x-z)\right\|_{L^{2}(\Ga)}.
\end{equation*}
An application of Lemma~\ref{lem: stability for gradient in AHK} thus leads to 
\begin{equation}\label{eq:step 1 in control of normal in AHK}
    \left\|\nu-\frac{\nabla V_{0}(x-z)}{|\nabla V_{0} (x-z)|}\right\|_{L^{2}(\Ga)}\leq 2\frac{\sqrt{\Lambda}}{r_{\mathcal{W}}}C_{7}\left(\int_{\Ga}\frac{H(\nu)}{\cM^{H}}d\mathcal{H}^{N-1}-N\left|\Om\right|\right)^{1/2}.
\end{equation}
Combining \eqref{eq:step 1 in W12 AHK}, \eqref{eq:step 2 in W12 AHK} and \eqref{eq:step 1 in control of normal in AHK} yields \eqref{eq:W 1,2 stability for HK} with 
\begin{equation*}
C = \sqrt{C_{8}} + \sqrt{C_{11}} + 2\frac{\sqrt{\Lambda}}{r_{\mathcal{W}}}C_{7}.\qedhere
\end{equation*}
\end{proof}

\begin{rem}[On the location of $z$ in Theorem \ref{thm:stability of Heintze-Karcher}]
The arguments in Remark~\ref{rem:location of z} can be adapted in the setting of Theorem~\ref{thm:stability of Heintze-Karcher} to show that if $\displaystyle\int_{\Ga}\frac{H(\nu)}{\cM^H}d\mathcal{H}^{N-1}-N\left|\Om\right|$ is small enough, then $z\in\Om$.
The only difference is that here one uses \eqref{eq:step 2 in W12 AHK} instead of \eqref{eq:stability for anisotropic normal}.
\end{rem}

\section*{Acknowledgements}
	This work has been supported by the Australian Research Council (ARC) Discovery Early Career Researcher Award (DECRA) DE230100954 ``Partial Differential Equations: geometric aspects and applications''.
	Giorgio Poggesi is a member of the Australian Mathematical Society (AustMS).
    João Gonçalves da Silva is supported by a School of Mathematical Sciences Scholarship at Adelaide University.


\end{document}